\newtheorem{theorem}{Theorem}[section]
\newtheorem{lemma}[theorem]{Lemma}
\theoremstyle{definition}
\newtheorem{definition}[theorem]{Definition}
\theoremstyle{remark}
\newtheorem{remark}[theorem]{Remark}
\numberwithin{equation}{section}
\begin{document}

\setcounter{page}{1}

\title[Fujita-type results for the degenerate parabolic equations]{Fujita-type results for the degenerate parabolic equations on the Heisenberg groups}

\author[A. Z. Fino, M. Ruzhansky, B. T. Torebek]{Ahmad Z. Fino, Michael Ruzhansky, Berikbol T. Torebek$^*$}

\address{\textcolor[rgb]{0.00,0.00,0.84}{Ahmad Z. Fino \newline Department of Mathematics, College of Engineering and Technology, \newline American University of the Middle East, Kuwait}}
\email{\textcolor[rgb]{0.00,0.00,0.84}{ahmad.fino01@gmail.com}}
\address{\textcolor[rgb]{0.00,0.00,0.84}{Michael Ruzhansky \newline Department of Mathematics: Analysis, Logic and Discrete Mathematics \newline Ghent University, Belgium \newline
 and \newline School of Mathematical Sciences \newline Queen Mary University of London, United Kingdom}}
\email{\textcolor[rgb]{0.00,0.00,0.84}{michael.ruzhansky@ugent.be}}
\address{\textcolor[rgb]{0.00,0.00,0.84}{Berikbol T. Torebek \newline Department of Mathematics: Analysis, Logic and Discrete Mathematics \newline Ghent University, Belgium \newline and \newline Institute of
Mathematics and Mathematical Modeling \newline 125 Pushkin str.,
050010 Almaty, Kazakhstan}}
\email{\textcolor[rgb]{0.00,0.00,0.84}{berikbol.torebek@ugent.be }}

\thanks{This research has been funded by the Science Committee of the Ministry of Education and Science of the Republic of Kazakhstan (Grant No. AP14869090), by the FWO Odysseus 1 grant G.0H94.18N: Analysis and Partial Differential Equations, and by the Methusalem programme of the Ghent University Special Research Fund (BOF) (Grant number 01M01021). Michael Ruzhansky is also supported by EPSRC grants EP/R003025/2 and EP/V005529/1.}

\let\thefootnote\relax\footnote{$^{*}$Corresponding author}

\subjclass[2010]{35A01, 35R03, 35B53}

\keywords{Porous medium equation, degenerate parabolic equation, critical exponents, Heisenberg group}

\begin{abstract} In this paper, we consider the Cauchy problem for the degenerate parabolic equations on the Heisenberg groups with power law non-linearities. We obtain Fujita-type critical exponents, which depend on the homogeneous dimension of the Heisenberg groups. The analysis includes the case of porous medium equations. Our proof approach is based on methods of nonlinear capacity estimates specifically adapted to the nature of the Heisenberg groups. We also use the Kaplan eigenfunctions method in combination with the Hopf-type lemma on the Heisenberg groups.
\end{abstract}
\maketitle
\tableofcontents
\section{Introduction}
The main purpose of this paper is to study the following two types of degenerate parabolic equations on the Heisenberg groups:
$$v_{t}=\Delta_{\mathbb{H}}v^{m}+v^\sigma,\qquad {t>0,\,\,\eta\in \mathbb{H}^n,}$$ and
$$u_{t}=u^{q}\Delta_{\mathbb{H}}u+u^p,\qquad t>0,\,\,\eta\in \mathbb{H}^n,$$
where $n\geq1$, $m\geq1$, $\sigma>1$, $q\geq0$, $p>1$. The Heisenberg group  is the Lie group $\mathbb{H}^n=\mathbb{R}^{2n+1}$ equipped with the following law
$$\eta\circ\eta^\prime=(x+x^\prime,y+y^\prime,\tau+\tau^\prime+2(x\cdotp y^\prime-x^\prime\cdotp y)),$$
where $\eta=(x,y,\tau)$, $\eta^\prime=(x^\prime,y^\prime,\tau^\prime)$, and $\cdotp$ is the scalar product in $\mathbb{R}^n$. The homogeneous Heisenberg norm is defined by
$$|\eta|_{_{\mathbb{H}}}=\left(\left(\sum_{i=1}^n (x_i^2+y_i^2)\right)^2+\tau^2\right)^{\frac{1}{4}}=\left((|x|^2+|y|^2)^2+\tau^2\right)^{\frac{1}{4}},$$
where $|\cdotp|$ is the Euclidean norm associated to $\mathbb{R}^n$. The left-invariant vector fields that span the Lie algebra are given by
$$X_i=\partial_{x_i}-2 y_i\partial_\tau,\qquad Y_i=\partial_{y_i}+2 x_i\partial_\tau.$$
The Heisenberg gradient is given by
\begin{equation}\label{48}
\nabla_{\mathbb{H}}=(X_1,\dots,X_n,Y_1,\dots,Y_n),
\end{equation}
and the sub-Laplacian is defined by
\begin{equation}\label{40}
\Delta_{\mathbb{H}}=\sum_{i=1}^n(X_i^2+Y_i^2)=\Delta_x+\Delta_y+4(|x|^2+|y|^2)\partial_\tau^2+4\sum_{i=1}^{n}\left(x_i\partial_{y_i\tau}^2-y_i\partial_{x_i\tau}^2\right),
\end{equation}
where $\Delta_x=\nabla_x\cdotp\nabla_x$ and $\Delta_y=\nabla_y\cdotp\nabla_y$ stand for the Laplace operators on $\mathbb{R}^n$. The homogeneous dimension of $\mathbb{H}^n$ is $Q=2n+2.$

We will obtain the results about nonexistence of global nontrivial solutions for various values of exponents $\sigma$ and $p$.
\subsection{Historical background}
\subsubsection{Results on $\mathbb{R}^n$} In \cite{Fuj66}, Fujita studied the following semi-linear heat equation
\begin{equation}\label{Fuj}
\begin{cases}
u_{t}(x,t)-\Delta u(x,t)=u^{p}(x, t), \,\,\, (x,t)\in \mathbb{R}^{n}\times (0,\infty), \\{}\\ u(x,0)=u_{0}(x)\geq0, \,\,\, x\in \mathbb{R}^{n}.
 \end{cases}
\end{equation}
It was shown that, if $1<p<1+\frac{2}{n}$, then problem \eqref{Fuj} admits no nontrivial positive global solutions, while, if $p>1+\frac{2}{n}$, then problem \eqref{Fuj} admits global positive solutions for some sufficiently small initial data. Later, in \cite{Hayakawa} Hayakawa proved that, if $p=1+\frac{2}{n}$, then problem \eqref{Fuj} admits no nontrivial positive global solutions. The number $p_F=1+\frac{2}{n}$ is called the Fujita critical exponent.

In \cite{Gal1}, Galaktionov et al. considered the porous medium equation with power nonlinearity
\begin{equation}\label{PME1}
\begin{cases}
u_{t}(x,t)-\Delta u^m(x,t)=u^{p}(x, t), \,\,\, (x,t)\in \mathbb{R}^{n}\times (0,\infty),\,m>1, \\{}\\ u(x,0)=u_{0}(x)\geq0, \,\,\, x\in \mathbb{R}^{n},
 \end{cases}
\end{equation} and established the following results:\\
(i) let $m<p< m+\frac{2}{n},$ then the solution of \eqref{PME1} does not exist globally in time;\\
(ii) let $p>m+\frac{2}{n},$ then the problem \eqref{PME1} has a global solution for some sufficiently small initial data.

In \cite{Moch1} the authors proved that, if $p=m+\frac{2}{n}$, then problem \eqref{PME1} has no positive global solutions.

When $m > 1,$ by using the transformation $v(x, t) = au^m(bx, t),$ $a = m^{m/(p-1)},$ $b = m^{(p-m)/2(p-1)},$ the porous medium equation \eqref{PME1} can be transformed to the degenerate parabolic equation
\begin{equation}\label{Deg1}
v_{t}-v^k\Delta v=v^r, \,\,\, (x,t)\in \mathbb{R}^{n}\times (0,\infty),
\end{equation} where $0<k=\frac{m-1}{m}<1$ and $r=\frac{m+p-1}{m}>1.$

In \cite{Gal2}, Galaktionov et al. obtained the following results for the equation \eqref{Deg1}:\\
(i) let $1<r<k+1+\frac{2}{n(1-k)},$ then the solution of \eqref{Deg1} does not exist globally in time;\\
(ii) let $r>k+1+\frac{2}{n(1-k)},$ then there are both global solutions and solutions blowing up in finite time.

In \cite{Wink1} Winkler extended the results of \cite{Gal2} by taking the more general $k\geq 1$ in \eqref{Deg1}. In particulary, Winkler obtained the following results:\\
(i) For $1 \leq r < k + 1$ (resp. $1 \leq r < \frac{3}{2}$ if $k= 1$), all positive solutions of \eqref{Deg1} are global but unbounded, provided that $u_0$ decreases sufficiently fast in space;\\
(ii) For $r = k + 1,$ all positive solutions of \eqref{Deg1} blow up in finite time;\\
(iii) For $r > k + 1,$ there are both global and non-global positive solutions, depending on the size of $u_0.$

It follows from the above results that the equation \eqref{Deg1} has two type of critical exponents
$$r_c=\left\{\begin{array}{ll}
k+1+\frac{2}{n(1-k)}&\,\,\,\text{for}\,\,\,0<k<1,\\\\
k+1&\,\,\,\text{for}\,\,\,k\geq 1.\\
\end{array}
\right.
$$
\subsubsection{Sub-elliptic extensions} In \cite{Zhang} Zhang considered the semilinear diffusion equation on the Heisenberg groups:
\begin{equation}\label{DifHeis}
u_t-\Delta_{\mathbb{H}} u=|u|^{p},\,\,\,\,\,{t>0,\,\,\eta\in \mathbb{H}^n,}
\end{equation} and they proved that, if $1<p< 1+\frac{2}{Q}$, $Q=2n+2,$ then the problem \eqref{DifHeis} admits no positive global solutions.
Later, Pohozhaev and V\'{e}ron \cite{PohVer} studied a more general parabolic equation on $\mathbb{H}^n,$ and proved that there is no global solutions for $1<p\leq 1+\frac{2}{Q}$.
In \cite{Georgiev}, Georgiev and Palmieri proved sharp lifespan estimates for local in time solutions of problem \eqref{DifHeis}.

In \cite{Pascucci, Pasc2} Pascucci obtained the Fujita-type results for the semilinear diffusion equation on Carnot groups. We also note that the nonexistence of global solutions to the various semilinear parabolic equations on the Heisenberg group were studied by many authors (see for example \cite{Samet, D'Ambrosio, Han, Kirane1, Yang}).

Recently, the second author and Yessirkegenov \cite{Ruzhansky} consider the following equation on general unimodular Lie groups $\mathbb{G},$
\begin{equation}\label{DifLie}
u_t-\Delta_{\mathbb{G}} u=|u|^{p},\,\,\,\,\,\text{in}\,\,\,\, (0,\infty)\times\mathbb{G},\end{equation} and established the following results:

\emph{Let $\mathbb{G}$ be a connected unimodular Lie group with polynomial volume growth of order $D$ and let $1<p<\infty:$\\
(i) let $1<p\leq 1+\frac{2}{D},$ then \eqref{PME1} does not admit any nontrivial global solution;\\
(ii) let $p>1+\frac{2}{D},$ then \eqref{PME1} has a global solution for some small initial data.}

\emph{Let $\mathbb{G}$ be a connected unimodular Lie group with exponential volume growth, then, for any $1<p<\infty$ the equation \eqref{PME1} has a global solution for some positive initial data.}

\emph{Let $\mathbb{G}$ be a compact Lie group, then, for any $1<p<\infty$ the equation \eqref{PME1} does not admit any nontrivial nonnegative solutions.}

\subsubsection{Motivation} From the above reasoning, it is easy to see that Fujita-type results for semilinear parabolic equations on manifolds are fairly well studied. However, such results have been little studied for strongly nonlinear parabolic equations. Here we can note some papers devoted to the study of the Fujita-type results for the p-Laplacian diffusion equations and porous medium equations on Riemannian manifolds \cite{Chen2, Grillo, Punzo1, Punzo2}.

As far as we know, in the case of sub-Riemannian manifolds, there are only a couple of papers \cite{Samet, Kirane1}, where Fujita-type results for the $p$-Laplacian diffusion equations on Heisenberg groups are obtained. We also note that in \cite{Ruzhansky1} the blow-up results were obtained for the porous medium equations on a bounded domain of the Carnot groups.

Motivated by this fact, in this paper we consider two types of strongly non-linear parabolic equations on the Heisenberg groups. In particular, we determine the critical exponents for which the considered equations are globally unsolvable.

\section{Porous medium equation}

In this section we consider the following porous medium equation
\begin{equation}\label{2}
\left\{
\begin{array}{ll}
\,\,\displaystyle{v_{t}=\Delta_{\mathbb{H}}v^{m}+v^\sigma,}&\displaystyle {t>0,\,\,\eta\in \mathbb{H}^n,}\\
\\
\displaystyle{v(0,\eta)= v_0(\eta)}\geq 0,&\displaystyle{\eta\in \mathbb{H}^n, }\\
\\
\displaystyle{v(t,\eta)\geq0},&\displaystyle {t>0,\,\,\eta\in \mathbb{H}^n,}
\end{array}
\right. \end{equation}
where $v_0\in L^1_{loc}(\mathbb{H}^n)$, $n\geq1$, $m\geq1$, $\sigma>1$.\\
\begin{definition}\textup{(Weak solution of \eqref{2})}${}$\\
Let $0\leq v_0\in L^1_{loc}(\mathbb{H}^n)$ and $T>0$. We say that $v$ is a nonnegative weak solution of \eqref{2} on $[0,T)\times\mathbb{H}^n$ if
$$v\in L_{loc}^\sigma((0,T)\times\mathbb{H}^n)\cap L_{loc}^m((0,T)\times\mathbb{H}^n)\cap L_{loc}^\infty((0,T);L_{loc}^1(\mathbb{H}^n)),$$
and
\begin{equation}\label{weaksolution2}\begin{split}
&\int_{\mathbb{H}^n}v(\tau,\eta)\psi(\tau,\eta)\,d\eta-\int_{\mathbb{H}^n}v(0,\eta)\psi(0,\eta)\,d\eta\nonumber\\
&=\int_0^\tau\int_{\mathbb{H}^n}v^\sigma\psi(t,\eta)\,d\eta\,dt+\int_0^\tau\int_{\mathbb{H}^n}v^{m}\,\Delta_{\mathbb{H}}\psi(t,\eta)\,d\eta\,dt+\int_0^\tau\int_{\mathbb{H}^n}v\psi_t(t,\eta)\,d\eta\,dt
\end{split}\end{equation}
holds for all compactly supported $\psi\in C^{1,2}_{t,x}([0,T)\times\mathbb{H}^n)$, and $0\leq\tau<T$. If $T=\infty$, we call $v$ a global  in time weak solution to \eqref{2}.
\end{definition}

\begin{theorem}\label{theo1}
Let $m>1,$ $\sigma>1$ and let $v_0(\eta)\geq0,\,v_0(\eta)\not\equiv0, \eta\in\mathbb{H}^n$. \\
\begin{itemize}
\item[(i)] Suppose that $v_0\in L^1(\mathbb{H}^n).$ If $$m<\sigma\leq m+\frac{2}{Q},$$
then problem \eqref{2} has no nonnegative global weak solutions.\\
\item[(ii)] Assume that $v_0\in L^1(\mathbb{H}^n)$, and there exists a constant $\varepsilon>0$ such that, for every $0<\gamma<Q$, the initial datum verifies the following assumption:
$$v_0(\eta)\geq \varepsilon (1+|\eta|_{_{\mathbb{H}}}^2)^{-\gamma/2}.$$
If
$$m<\sigma<m+\frac{2}{\gamma},$$
then problem \eqref{2} has no nonnegative global weak solutions.\\
\item[(iii)] Assume that $v_0\in L^1(\mathbb{H}^n)\cap L^\infty(\mathbb{H}^n)$. If $\sigma=m,$ then problem \eqref{2} has no nonnegative global weak solutions $v\in  L_{loc}^\infty((0,\infty);L^\infty(\mathbb{H}^n))$.\end{itemize}
\end{theorem}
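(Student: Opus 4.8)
The plan is to argue by contradiction in all three cases, assuming a nonnegative global weak solution $v$ exists, and to use throughout the anisotropic dilations $\delta_\lambda(x,y,\tau)=(\lambda x,\lambda y,\lambda^2\tau)$ of $\mathbb{H}^n$, which satisfy $|\delta_\lambda\eta|_{\mathbb{H}}=\lambda|\eta|_{\mathbb{H}}$, have Jacobian $\lambda^{Q}$ with $Q=2n+2$, and render $\Delta_{\mathbb{H}}$ homogeneous of degree two. For parts (i) and (ii) I would run the rescaled nonlinear capacity (test-function) method adapted to this structure. Fix smooth cutoffs $\Phi\colon[0,\infty)\to[0,1]$ and $\Theta\colon\mathbb{H}^n\to[0,1]$ with $\Phi\equiv1$ on $[0,1]$, $\Phi\equiv0$ on $[2,\infty)$, $\Theta\equiv1$ on $\{|\eta|_{\mathbb{H}}\le1\}$, $\Theta\equiv0$ on $\{|\eta|_{\mathbb{H}}\ge2\}$; for parameters $R,\beta>0$ and a large integer $\ell$ put $\psi_R(t,\eta):=\Phi(R^{-\beta}t)^{\ell}\,\Theta(\delta_{1/R}\eta)^{\ell}$, a compactly supported admissible test function with $\psi_R(0,\cdot)\ge\mathbf{1}_{\{|\eta|_{\mathbb{H}}<R\}}$. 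Inserting $\psi=\psi_R$ into the weak formulation with $\tau=2R^{\beta}$ (so $\psi_R(\tau,\cdot)\equiv0$) and using $v\ge0$ gives
\begin{equation*}
\int_{\mathbb{H}^n}v_0\,\psi_R(0,\cdot)\,d\eta+\int_0^\infty\!\!\int_{\mathbb{H}^n}v^\sigma\psi_R\,d\eta\,dt \le \int_0^\infty\!\!\int_{\mathbb{H}^n}v^{m}|\Delta_{\mathbb{H}}\psi_R|\,d\eta\,dt+\int_0^\infty\!\!\int_{\mathbb{H}^n}v\,|\partial_t\psi_R|\,d\eta\,dt .
\end{equation*}

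Since $|\Delta_{\mathbb{H}}\psi_R|\le CR^{-2}$, $|\partial_t\psi_R|\le CR^{-\beta}$ and $\operatorname{supp}\psi_R$ has $(t,\eta)$-measure $\lesssim R^{\beta+Q}$, Hölder's inequality with exponents $(\sigma/m,\sigma/(\sigma-m))$ in the first term on the right and $(\sigma,\sigma/(\sigma-1))$ in the second — the quotients $\psi_R^{-m/(\sigma-m)}|\Delta_{\mathbb{H}}\psi_R|^{\sigma/(\sigma-m)}$ and $\psi_R^{-1/(\sigma-1)}|\partial_t\psi_R|^{\sigma/(\sigma-1)}$ being bounded once $\ell$ is large — followed by Young's inequality to absorb the $v^\sigma\psi_R$-factor into the left-hand side yields
\begin{equation*}
\int_{\mathbb{H}^n}v_0\,\psi_R(0,\cdot)\,d\eta+\tfrac12\int_0^\infty\!\!\int_{\mathbb{H}^n}v^\sigma\psi_R\,d\eta\,dt \le C R^{\,\beta+Q-\frac{2\sigma}{\sigma-m}}+C R^{\,\beta+Q-\frac{\beta\sigma}{\sigma-1}} .
\end{equation*}
An elementary computation shows that $m<\sigma\le m+\tfrac2Q$ is exactly the range for which some $\beta$ makes both right-hand exponents $\le0$. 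When $\sigma<m+\tfrac2Q$ one picks $\beta$ slightly above $Q(\sigma-1)$ so that both exponents become strictly negative and lets $R\to\infty$, using $\int v_0\psi_R(0,\cdot)\to\int_{\mathbb{H}^n}v_0>0$, to reach a contradiction. At the endpoint $\sigma=m+\tfrac2Q$ the choice $\beta=Q(\sigma-1)$ makes both exponents vanish, which first gives $v^\sigma\in L^1((0,\infty)\times\mathbb{H}^n)$; re-running the estimate, the right-hand integrals are now bounded by $\big(\int_{\{t\ge R^{\beta}\}\cup\{|\eta|_{\mathbb{H}}\ge R\}}v^\sigma\big)^{\theta}$ for some $\theta>0$, which tends to $0$ by absolute continuity of the integral, again contradicting $\int v_0\psi_R(0,\cdot)\to\int_{\mathbb{H}^n}v_0>0$. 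For part (ii) the same inequality is used, but the lower bound on $v_0$ now gives $\int v_0\psi_R(0,\cdot)\gtrsim\varepsilon\int_{|\eta|_{\mathbb{H}}<R}(1+|\eta|_{\mathbb{H}}^2)^{-\gamma/2}\,d\eta\gtrsim\varepsilon R^{\,Q-\gamma}$ because $\gamma<Q$; the hypothesis $\sigma<m+\tfrac2\gamma$ is precisely what makes the interval $\big(\gamma(\sigma-1),\,\tfrac{2\sigma}{\sigma-m}-\gamma\big)$ of admissible $\beta$ nonempty, and for any such $\beta$ both right-hand exponents are strictly below $Q-\gamma$, so letting $R\to\infty$ contradicts the lower bound.

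For part (iii), where $\sigma=m$ removes the slack needed in Young's inequality, the plan is the Kaplan eigenfunction method localized to a Heisenberg ball together with a Hopf-type lemma. Let $\phi_R>0$ be the first Dirichlet eigenfunction of $-\Delta_{\mathbb{H}}$ on $B_R:=\{|\eta|_{\mathbb{H}}<R\}$, with eigenvalue $\lambda_R=R^{-2}\lambda_1\to0$ by the dilation structure, normalized so that $\int_{\mathbb{H}^n}\phi_R^{k}=1$ for a large integer $k$. The Hopf-type lemma on $\mathbb{H}^n$ ensures that $\phi_R$ vanishes nondegenerately along the regular part of $\partial B_R$, which makes $\psi=\phi_R^{k}$ an admissible test function (a neighbourhood of the characteristic set of $\partial B_R$ being handled with the $L^\infty$-bound on $v$). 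From the weak formulation the function $F_R(t):=\int_{\mathbb{H}^n}v(t,\eta)\,\phi_R^{k}\,d\eta$ is absolutely continuous and, using $\Delta_{\mathbb{H}}\phi_R^{k}=k(k-1)\phi_R^{k-2}|\nabla_{\mathbb{H}}\phi_R|^2-k\lambda_R\phi_R^{k}$ and discarding the nonnegative first term,
\begin{equation*}
F_R'(t)=\int_{\mathbb{H}^n}v^{m}\phi_R^{k}\,d\eta+\int_{\mathbb{H}^n}v^{m}\Delta_{\mathbb{H}}\phi_R^{k}\,d\eta \ge (1-k\lambda_R)\int_{\mathbb{H}^n}v^{m}\phi_R^{k}\,d\eta .
\end{equation*}
Jensen's inequality (valid because $m\ge1$) with the probability measure $\phi_R^{k}\,d\eta$ gives $\int v^{m}\phi_R^{k}\ge F_R(t)^{m}$, so choosing $R$ with $k\lambda_R<1$ we obtain $F_R'\ge cF_R^{m}$ with $c>0$ and $F_R(0)=\int v_0\phi_R^{k}>0$; since $m>1$ this forces $F_R$ to blow up in finite time, contradicting global existence.

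The step I expect to be the main obstacle is twofold: in part (i), the endpoint $\sigma=m+\tfrac2Q$ requires the genuinely refined version of the capacity argument, with a careful near/far decomposition of the space–time integrals and the a priori gain $v^\sigma\in L^1$; in part (iii), the delicate point is justifying that $\phi_R^{k}$ is an admissible test function and that the identity for $F_R'$ holds, despite the non-smoothness of $\partial B_R$ at its characteristic set — which is exactly where the Hopf-type lemma on $\mathbb{H}^n$ and the boundedness assumptions on $v$ and $v_0$ enter.
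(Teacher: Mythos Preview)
For parts (i) and (ii) your argument is essentially the paper's: the nonlinear-capacity method with anisotropically rescaled cutoffs, Young/H\"older to absorb, and the refined H\"older treatment at the endpoint $\sigma=m+\tfrac{2}{Q}$. The differences are cosmetic --- you cut off with a single function of $|\eta|_{\mathbb{H}}$ while the paper uses a product $\varphi_1(x)\varphi_1(y)\varphi_2(\tau)$, and you parametrize scales by $(R,\beta)$ while the paper fixes $\alpha=\frac{\sigma-m}{2(\sigma-1)}$ and scales space by $T^\alpha$; these translate into one another via $T=R^\beta$.

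For part (iii) your route diverges from the paper's and carries a real gap. The paper does \emph{not} use the Dirichlet eigenfunction here; instead it constructs an explicit smooth weight $\Theta(\eta)=\exp\bigl(-\varepsilon[A+(|x|^2+|y|^2)^2+\tau^2]^{1/2}\bigr)$ on all of $\mathbb{H}^n$ satisfying $-\Delta_{\mathbb{H}}\Theta\le\tfrac12\Theta$ (taking $\varepsilon=\tfrac{1}{4(Q+2)}$), proves an approximation lemma showing that for $v\in L^\infty_{loc}((0,T);L^\infty)$ the weak formulation extends to non-compactly-supported $C^{1,2}$ test functions with integrable $\psi_0,\partial_t\psi_0,\nabla_{\mathbb{H}}\psi_0,\Delta_{\mathbb{H}}\psi_0$, and then runs the capacity estimate with $\psi(t,\eta)=\varphi_3^\ell(t)\Theta(\eta)$: the $v^m\Delta_{\mathbb{H}}\Theta$ term is swallowed by $\tfrac12\int v^\sigma\psi$, only the time cutoff survives, and one obtains $\int v_0\Theta\le CT^{-1/(\sigma-1)}\to0$. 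Your Kaplan scheme with $\phi_R^k$ would reach the same conclusion \emph{if} $\phi_R^k$ (extended by zero) were admissible, but the weak formulation requires $\psi\in C^{1,2}_{t,x}$, and for $\phi_R^k$ this demands $C^2$-up-to-the-boundary regularity of the sub-elliptic eigenfunction on the Kor\'anyi ball, which is delicate precisely at the characteristic poles $(0,0,\pm R^2)$ --- the Hopf lemma you invoke is stated away from the characteristic set and does not settle this. Your proposed fix, appealing to the $L^\infty$-bound on $v$ near those poles, would require an explicit smoothing argument that you have not supplied. The paper's exponential weight sidesteps the issue entirely because $\Theta$ is $C^\infty$ on all of $\mathbb{H}^n$; the $L^\infty$ hypothesis on $v$ is used instead to justify truncation at spatial infinity (the approximation lemma), and that is where the statement's extra regularity assumption actually enters.
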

\begin{remark} When $m=1$, the critical exponent $\sigma_c=m+\frac{2}{Q}$ coincides with the critical exponent obtained in \cite{Zhang} for the semilinear diffusion equations on $\mathbb{H}^n.$

Observe that in the case $\gamma \geq Q,$ we have $$m+\frac{2}{\gamma}\leq m+\frac{2}{Q}.$$ Consequently, part (ii) of Theorem \ref{theo1} follows immediately from the part (i).
\end{remark}
\begin{remark} Note that in Theorem \ref{theo1} there are no results about the global existence of a solution. We expect that, for $\sigma>m+\frac{2}{Q}$ and for sufficiently small initial data there should exist a global solution. Due to technical difficulties, we left this question open.
\end{remark}
To prove Theorem \ref{theo1}, below we give a number of auxiliary results.
\begin{lemma}\label{lemma2}
Let $u,v$ be twice differentiable real valued functions defined on $\mathbb{H}^n$. Then
$$\Delta_{\mathbb{H}}(uv)=\Delta_{\mathbb{H}}(u)v+2\nabla_{\mathbb{H}}(u)\nabla_{\mathbb{H}}(v)+u\Delta_{\mathbb{H}}(v).$$
\end{lemma}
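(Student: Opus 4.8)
The plan is to reduce the sub-Laplacian product rule to the familiar product rule for second-order derivatives along the individual vector fields $X_i$ and $Y_i$. First I would recall that each of the $X_i$ and $Y_i$ is a first-order differential operator (a left-invariant vector field), hence a derivation: for any twice differentiable $u,v$ one has $X_i(uv) = (X_iu)v + u(X_iv)$, and similarly for $Y_i$. This is immediate from the explicit formulas $X_i = \partial_{x_i} - 2y_i\partial_\tau$ and $Y_i = \partial_{y_i} + 2x_i\partial_\tau$, since each is a linear combination of first-order partial derivatives with smooth coefficients, and ordinary partial derivatives obey the Leibniz rule.

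Next I would apply $X_i$ a second time. Writing $X_i(uv) = (X_iu)v + u(X_iv)$ and differentiating once more with the (already established) first-order Leibniz rule gives
$$X_i^2(uv) = (X_i^2u)v + (X_iu)(X_iv) + (X_iu)(X_iv) + u(X_i^2v) = (X_i^2u)v + 2(X_iu)(X_iv) + u(X_i^2v),$$
and the identical computation yields $Y_i^2(uv) = (Y_i^2u)v + 2(Y_iu)(Y_iv) + u(Y_i^2v)$. Summing over $i = 1,\dots,n$ and using the definition $\Delta_{\mathbb{H}} = \sum_{i=1}^n (X_i^2 + Y_i^2)$ from \eqref{40}, together with $\nabla_{\mathbb{H}} = (X_1,\dots,X_n,Y_1,\dots,Y_n)$ from \eqref{48}, we obtain
$$\Delta_{\mathbb{H}}(uv) = \Delta_{\mathbb{H}}(u)\,v + 2\sum_{i=1}^n\bigl((X_iu)(X_iv) + (Y_iu)(Y_iv)\bigr) + u\,\Delta_{\mathbb{H}}(v) = \Delta_{\mathbb{H}}(u)\,v + 2\nabla_{\mathbb{H}}(u)\cdot\nabla_{\mathbb{H}}(v) + u\,\Delta_{\mathbb{H}}(v),$$
which is the claimed identity.

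There is no real obstacle here: the only point requiring a moment's care is the bookkeeping when applying $X_i$ twice — making sure the two cross terms $(X_iu)(X_iv)$ genuinely add rather than cancel — and the observation that the first-order product rule is valid for the $X_i,Y_i$ precisely because they have no zeroth-order term. Everything else is notation unwinding via \eqref{40} and \eqref{48}. I would present the one-line computation for $X_i^2(uv)$ explicitly and then simply remark that the $Y_i$ case is identical and that summation finishes the proof.
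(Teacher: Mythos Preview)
Your argument is correct and is in fact cleaner than the paper's. The paper proceeds coordinate-wise: it expands $\Delta_{\mathbb{H}}$ via the second expression in \eqref{40} as $\Delta_x+\Delta_y+4(|x|^2+|y|^2)\partial_\tau^2+4\sum_i(x_i\partial_{y_i\tau}^2-y_i\partial_{x_i\tau}^2)$, applies the ordinary Euclidean product rule to each of the five pieces separately (producing five displayed identities), and then reassembles the cross terms by hand to recognise $\nabla_{\mathbb{H}}(u)\nabla_{\mathbb{H}}(v)$. You instead work directly with the first expression $\Delta_{\mathbb{H}}=\sum_i(X_i^2+Y_i^2)$ and use only that each $X_i,Y_i$ is a derivation; this avoids all the coordinate bookkeeping and the somewhat delicate regrouping step. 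Your route is shorter and generalises verbatim to any H\"ormander sum of squares, while the paper's route has the minor advantage of making the cross term explicit in the $(x,y,\tau)$ coordinates, which is the form actually used later in Lemma~\ref{lemma3}.
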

\begin{proof}
It is easy to see that
\begin{equation}\label{41}
\Delta_x(uv)=\Delta_{x}(u)v+2\nabla_{x}(u)\nabla_{x}(v)+u\Delta_{x}(v),
\end{equation}
\begin{equation}\label{42}
\Delta_y(uv)=\Delta_{y}(u)v+2\nabla_{y}(u)\nabla_{y}(v)+u\Delta_{y}(v),
\end{equation}
and
\begin{equation}\label{43}
\partial^2_\tau(uv)=\partial^2_\tau(u)v+2\partial_\tau(u)\partial_\tau(v)+u\partial^2_\tau(v).
\end{equation}
Moreover,
\begin{equation}\label{44}
\begin{split}\sum_{i=1}^{n}x_i\partial_{y_i\tau}^2(uv)&=v\sum_{i=1}^{n}x_i\partial_{y_i\tau}^2(u)+\partial_\tau (u)\sum_{i=1}^{n}x_i\partial_{y_i}(v)\\&+\partial_\tau (v)\sum_{i=1}^{n}x_i\partial_{y_i}(u)+u\sum_{i=1}^{n}x_i\partial_{y_i\tau}^2(v),
\end{split}\end{equation}
and
\begin{equation}\label{45}
\begin{split}\sum_{i=1}^{n}y_i\partial_{x_i\tau}^2(uv)&=v\sum_{i=1}^{n}y_i\partial_{x_i\tau}^2(u)+\partial_\tau (u)\sum_{i=1}^{n}y_i\partial_{x_i}(v)\\&+\partial_\tau (v)\sum_{i=1}^{n}y_i\partial_{x_i}(u)+u\sum_{i=1}^{n}y_i\partial_{x_i\tau}^2(v).
\end{split}\end{equation}
Using \eqref{41}-\eqref{45} and the definition of $\Delta_{\mathbb{H}}$, we obtain
\begin{align*}
\Delta_{\mathbb{H}}(uv)&=\Delta_{\mathbb{H}}(u)v+u\Delta_{\mathbb{H}}(v)\\
&\quad+2\left[\nabla_{x}(u)\nabla_{x}(v)+\nabla_{y}(u)\nabla_{y}(v)+4(|x|^2+|y|^2)\partial_\tau(u)\partial_\tau(v)\right.\\
&\quad+2\partial_\tau (v)\sum_{i=1}^{n}x_i\partial_{y_i}(u)-2\partial_\tau (v)\sum_{i=1}^{n}y_i\partial_{x_i}(u)\\
&\quad\left.+2\partial_\tau (u)\sum_{i=1}^{n}x_i\partial_{y_i}(v)-2\partial_\tau (u)\sum_{i=1}^{n}y_i\partial_{x_i}(v)\right]\\
&=\Delta_{\mathbb{H}}(u)v+u\Delta_{\mathbb{H}}(v)+2\nabla_{\mathbb{H}}(u)\nabla_{\mathbb{H}}(v),
\end{align*} proving the claim.
\end{proof}

\begin{lemma}\label{lemma4}
For $\varepsilon>0$, $A>0$, let
$$\Theta(\eta)=e^{-\varepsilon\left[A+(|x|^2+|y|^2)^2+\tau^2\right]^{\frac{1}{2}}},\qquad \eta=(x,y,\tau)\in \mathbb{H}^n.$$
Then
$$
-\Delta_{\mathbb{H}}\Theta(\eta)\leq 2\varepsilon(Q+2)\Theta(\eta),\qquad\hbox{for all}\,\,\eta\in\mathbb{H}^n.
$$
\end{lemma}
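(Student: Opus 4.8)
The plan is to rewrite $\Theta$ as the exponential of a smooth ``regularized gauge'' and reduce the estimate to a pointwise upper bound on the sub-Laplacian of that gauge. Put
$$\rho(\eta):=\left[A+(|x|^2+|y|^2)^2+\tau^2\right]^{1/2}=\left(A+|\eta|_{\mathbb{H}}^4\right)^{1/2},$$
which is smooth on $\mathbb{H}^n$ because $A>0$, and note $\Theta=e^{-\varepsilon\rho}$. For any vector field $Z$ one has $Z^2(h(\rho))=h''(\rho)(Z\rho)^2+h'(\rho)(Z^2\rho)$; summing this over $Z\in\{X_1,\dots,X_n,Y_1,\dots,Y_n\}$ with $h(r)=e^{-\varepsilon r}$ gives
$$\Delta_{\mathbb{H}}\Theta=\bigl(\varepsilon^2|\nabla_{\mathbb{H}}\rho|^2-\varepsilon\,\Delta_{\mathbb{H}}\rho\bigr)e^{-\varepsilon\rho},$$
where $|\nabla_{\mathbb{H}}\rho|^2:=\nabla_{\mathbb{H}}\rho\cdot\nabla_{\mathbb{H}}\rho\geq 0$. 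Dropping this nonnegative term and using $\Theta>0$, we get $-\Delta_{\mathbb{H}}\Theta\leq \varepsilon\,(\Delta_{\mathbb{H}}\rho)\,\Theta$, so it is enough to establish the pointwise inequality $\Delta_{\mathbb{H}}\rho\leq 2(Q+2)$ on $\mathbb{H}^n$.

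Next I would compute $\Delta_{\mathbb{H}}\rho$ directly from \eqref{40}. Writing $g:=|x|^2+|y|^2$ and $f:=A+g^2+\tau^2$, elementary differentiation gives $\partial_{x_i}\rho=2gx_i f^{-1/2}$, $\partial_{y_i}\rho=2gy_i f^{-1/2}$, $\partial_\tau\rho=\tau f^{-1/2}$, and then
$$\Delta_x\rho+\Delta_y\rho=4(n+1)g\,f^{-1/2}-4g^3 f^{-3/2},\qquad 4g\,\partial_\tau^2\rho=4g\,f^{-1/2}-4g\tau^2 f^{-3/2}.$$
The structurally important point is that the ``twist'' terms drop out: since $\partial^2_{y_i\tau}\rho=-2\tau g y_i f^{-3/2}$ and $\partial^2_{x_i\tau}\rho=-2\tau g x_i f^{-3/2}$, one has $4\sum_i\bigl(x_i\partial^2_{y_i\tau}\rho-y_i\partial^2_{x_i\tau}\rho\bigr)=-8\tau g f^{-3/2}\sum_i x_iy_i+8\tau g f^{-3/2}\sum_i x_iy_i=0$. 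Adding the surviving contributions and using $g^2+\tau^2=f-A$ together with $4(n+2)=2(Q+2)$ (recall $Q=2n+2$) yields
$$\Delta_{\mathbb{H}}\rho=2(Q+2)\,g\,f^{-1/2}-4g(g^2+\tau^2)\,f^{-3/2}.$$
As a check one can instead invoke the standard radial identity $\Delta_{\mathbb{H}}\phi(|\eta|_{\mathbb{H}})=\frac{g}{|\eta|_{\mathbb{H}}^2}\bigl(\phi''(s)+\frac{Q-1}{s}\phi'(s)\bigr)\big|_{s=|\eta|_{\mathbb{H}}}$, valid because $|\nabla_{\mathbb{H}}|\eta|_{\mathbb{H}}|^2=g/|\eta|_{\mathbb{H}}^2$, applied with $\phi(s)=(A+s^4)^{1/2}$; this produces the same expression.

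To finish, discard the manifestly nonnegative quantity $4g(g^2+\tau^2)f^{-3/2}\geq 0$, so that
$$\Delta_{\mathbb{H}}\rho\leq 2(Q+2)\,\frac{g}{\sqrt{A+g^2+\tau^2}}.$$
Since $A>0$ and $\tau^2\geq 0$ imply $A+g^2+\tau^2\geq g^2$, and $g\geq 0$, we get $g/\sqrt{A+g^2+\tau^2}\leq 1$, hence $\Delta_{\mathbb{H}}\rho\leq 2(Q+2)$ everywhere, which combined with the reduction of the first paragraph proves the lemma. I expect the only genuine work to be the bookkeeping in computing $\Delta_{\mathbb{H}}\rho$ from \eqref{40}, and in particular verifying the cancellation of the twist terms; this is routine rather than delicate, and the rest is elementary.
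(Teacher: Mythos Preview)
Your proof is correct and follows essentially the same route as the paper's: a direct computation of the sub-Laplacian via formula \eqref{40}, noting that the mixed ``twist'' contributions cancel, discarding the nonnegative term, and finishing with $g\leq f^{1/2}$. The only cosmetic difference is that you first apply the chain rule $\Delta_{\mathbb{H}}(e^{-\varepsilon\rho})=(\varepsilon^2|\nabla_{\mathbb{H}}\rho|^2-\varepsilon\Delta_{\mathbb{H}}\rho)\Theta$ to reduce the question to bounding $\Delta_{\mathbb{H}}\rho$, whereas the paper differentiates $\Theta$ itself term by term; the underlying computation and the two key estimates are identical.
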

\begin{proof} Let $\rho(x,y,\tau):=A+(|x|^2+|y|^2)^2+\tau^2$. We have
$$\nabla_x \Theta(\eta)=-\frac{\varepsilon}{2}\rho^{-\frac{1}{2}}\nabla_x(\rho)\Theta(\eta),$$
and then
$$
\Delta_x \Theta(\eta)=\frac{\varepsilon}{4}\rho^{-\frac{3}{2}}|\nabla_x(\rho)|^2\Theta(\eta)-\frac{\varepsilon}{2}\rho^{-\frac{1}{2}}\Delta_x(\rho)\Theta(\eta)+\frac{\varepsilon^2}{4}\rho^{-1}|\nabla_x(\rho)|^2\Theta(\eta).
$$
As $\Delta_x(\rho)=(4n+8)|x|^2+4n|y|^2$, we conclude that
\begin{equation}\label{35}
\begin{split}\Delta_x \Theta(\eta)&=\frac{1}{4}\left(\varepsilon\rho^{-\frac{3}{2}}+\varepsilon^2\rho^{-1}\right)|\nabla_x(\rho)|^2\Theta(\eta)\\&-\varepsilon\left((2n+4)|x|^2+2n|y|^2\right)\rho^{-\frac{1}{2}}\Theta(\eta).
\end{split}\end{equation}
Similarly,
\begin{equation}\label{36}
\begin{split}\Delta_y \Theta(\eta)&=\frac{1}{4}\left(\varepsilon\rho^{-\frac{3}{2}}+\varepsilon^2\rho^{-1}\right)|\nabla_y(\rho)|^2\Theta(\eta)\\&-\varepsilon\left(2n|x|^2+(2n+4)|y|^2\right)\rho^{-\frac{1}{2}}\Theta(\eta).
\end{split}\end{equation}
On the other hand,
$$\partial_\tau \Theta(\eta)=-\frac{\varepsilon}{2}\rho^{-\frac{1}{2}}\partial_\tau(\rho)\Theta(\eta),$$
and then
\begin{eqnarray}\label{37}
\partial^2_\tau \Theta(\eta)&=&\frac{\varepsilon}{4}\rho^{-\frac{3}{2}}(2\tau)^2\Theta(\eta)-\frac{\varepsilon}{2}\rho^{-\frac{1}{2}}2\Theta(\eta)+\frac{\varepsilon^2}{4}\rho^{-1}4\Theta(\eta)\nonumber\\
&=&\tau^2\left(\varepsilon\rho^{-\frac{3}{2}}+\varepsilon^2\rho^{-1}\right)\Theta(\eta)-\varepsilon\rho^{-\frac{1}{2}}\Theta(\eta).
\end{eqnarray}
Next,
\begin{align*}
\partial^2_{y_i\tau} \Theta(\eta)&=-\varepsilon\tau\partial_{y_j}\left(\rho^{-\frac{1}{2}}\Theta(\eta)\right)\\
&=-\varepsilon\tau\left(-\frac{1}{2}\rho^{-\frac{3}{2}}\partial_{y_j}(\rho)\Theta(\eta)-\frac{\varepsilon}{2}\rho^{-1}\partial_{y_j}(\rho)\Theta(\eta)\right)\\
&=\frac{\tau}{2}\left(\varepsilon\rho^{-\frac{3}{2}}+\varepsilon^2\rho^{-1}\right)\partial_{y_j}(\rho)\Theta(\eta)\\
&=2\tau\left(\varepsilon\rho^{-\frac{3}{2}}+\varepsilon^2\rho^{-1}\right)y_i(|x|^2+|y|^2)\Theta(\eta),
\end{align*}
for all $1\leq i\leq n$, which implies that
\begin{equation}\label{38}
\sum_{i=1}^nx_i\partial^2_{y_i\tau} \Theta(\eta)=2\tau\left(\varepsilon\rho^{-\frac{3}{2}}+\varepsilon^2\rho^{-1}\right)x\cdotp y(|x|^2+|y|^2)\Theta(\eta).
\end{equation}
Similarly,
\begin{equation}\label{39}
\sum_{i=1}^ny_i\partial^2_{x_i\tau} \Theta(\eta)=2\tau\left(\varepsilon\rho^{-\frac{3}{2}}+\varepsilon^2\rho^{-1}\right)x\cdotp y(|x|^2+|y|^2)\Theta(\eta).
\end{equation}
Using \eqref{36}-\eqref{39} in \eqref{40}, we arrive at
\begin{align*}
\Delta_{\mathbb{H}}\Theta(\eta)&=\left[\frac{|\nabla_x(\rho)|^2+|\nabla_y(\rho)|^2}{4}+\tau^2(|x|^2+|y|^2)\right]\left(\varepsilon\rho^{-\frac{3}{2}}+\varepsilon^2\rho^{-1}\right)\Theta(\eta)\\& -4\varepsilon(n+2)(|x|^2+|y|^2)\rho^{-\frac{1}{2}}\Theta(\eta)\\
&\geq-4\varepsilon(n+2)(|x|^2+|y|^2)\rho^{-\frac{1}{2}}\Theta(\eta),
\end{align*}
and finally, using the fact that $(|x|^2+|y|^2)^2\leq \rho\Longrightarrow(|x|^2+|y|^2)\leq \rho^{\frac{1}{2}}$, we get
$$\Delta_{\mathbb{H}}\Theta(\eta)\geq-4\varepsilon(n+2)\Theta(\eta)=-2\varepsilon(Q+2)\Theta(\eta),$$ completing the proof.
\end{proof}

\begin{lemma}\label{lemma3}  Let $v_0\in L^\infty(\mathbb{H}^n)$ and $T>0$. Let $\psi_0\in C^{1,2}_{t,x}([0,T)\times\mathbb{H}^n)$ be such that
\begin{equation}\label{49}
\int_{\mathbb{H}^n}\{|\psi_0(t,\eta)|+|\partial_t\psi_0(t,\eta)|+|\nabla_{\mathbb{H}}\psi_0(t,\eta)|+|\Delta_{\mathbb{H}}\psi_0(t,\eta)|\}\,d\eta<\infty,\,\forall t\in[0,T).
\end{equation}
 If $0\leq v\in  L_{loc}^\infty((0,T);L^\infty(\mathbb{H}^n))$ is a weak solution of \eqref{2} on $[0,T)\times\mathbb{H}^n$ then
\begin{equation}\label{weaksolution3}\begin{split}
&\int_{\mathbb{H}^n}v(\tau,\eta)\psi_0(\tau,\eta)\,d\eta-\int_{\mathbb{H}^n}v(0,\eta)\psi_0(0,\eta)\,d\eta\nonumber\\
&=\int_0^\tau\int_{\mathbb{H}^n}v^\sigma\psi_0(t,\eta)\,d\eta\,dt +\int_0^\tau\int_{\mathbb{H}^n}v^{m}\,\Delta_{\mathbb{H}}\psi_0(t,\eta)\,d\eta\,dt +\int_0^\tau\int_{\mathbb{H}^n}v\partial_t\psi_0(t,\eta)\,d\eta\,dt
\end{split}\end{equation}
for all $\tau\in[0,T)$.
\end{lemma}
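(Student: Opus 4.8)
The argument is the standard cutoff/density upgrade: insert $\psi_0$ times a spatial cutoff into \eqref{weaksolution2} and let the cutoff tend to $1$, using that $v$ (hence $v^m$ and $v^\sigma$) is essentially bounded on $(0,\tau)\times\mathbb{H}^n$ to absorb the commutator terms produced by the cutoff. Fix $\chi\in C^\infty([0,\infty))$ with $0\le\chi\le1$, $\chi\equiv1$ on $[0,1]$, $\chi\equiv0$ on $[2,\infty)$, and set $\phi_R(\eta):=\chi(|\eta|_{_{\mathbb{H}}}/R)$, $R>0$. Since $|\eta|_{_{\mathbb{H}}}^{2-Q}$ is $\Delta_{\mathbb{H}}$-harmonic off the origin, $|\nabla_{\mathbb{H}}|\eta|_{_{\mathbb{H}}}|\le1$ and $\Delta_{\mathbb{H}}|\eta|_{_{\mathbb{H}}}\le(Q-1)/|\eta|_{_{\mathbb{H}}}$ on $\mathbb{H}^n\setminus\{0\}$, hence $\phi_R\in C^\infty_c(\mathbb{H}^n)$ with $0\le\phi_R\le1$, $\phi_R\equiv1$ on $B_R:=\{|\eta|_{_{\mathbb{H}}}<R\}$, $\operatorname{supp}\phi_R\subset\overline{B_{2R}}$, and $|\nabla_{\mathbb{H}}\phi_R|\le C/R$, $|\Delta_{\mathbb{H}}\phi_R|\le C/R^2$ with $C=C(\chi,Q)$, both derivatives supported in the annulus $D_R:=B_{2R}\setminus B_R$. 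If the Definition's notion of compact support includes the time variable one also multiplies by a fixed $\theta\in C^\infty_c([0,T))$ with $\theta\equiv1$ on $[0,\tau]$; since \eqref{weaksolution2} for this $\tau$ only sees times $\le\tau$, where $\theta\equiv1$ and $\partial_t\theta=0$, this factor is harmless and I suppress it below. Thus $\psi_R:=\psi_0\phi_R$ is an admissible test function, with $\partial_t\psi_R=\phi_R\,\partial_t\psi_0$ and, by Lemma \ref{lemma2},
$$\Delta_{\mathbb{H}}\psi_R=\phi_R\,\Delta_{\mathbb{H}}\psi_0+2\,\nabla_{\mathbb{H}}\psi_0\cdot\nabla_{\mathbb{H}}\phi_R+\psi_0\,\Delta_{\mathbb{H}}\phi_R.$$

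Substituting $\psi=\psi_R$ into \eqref{weaksolution2} reproduces \eqref{weaksolution3} with every $\psi_0$ replaced by $\psi_0\phi_R$, plus the two remainder integrals
$$2\int_0^\tau\!\!\int_{D_R}v^{m}\,\nabla_{\mathbb{H}}\psi_0\cdot\nabla_{\mathbb{H}}\phi_R\,d\eta\,dt\quad\text{and}\quad\int_0^\tau\!\!\int_{D_R}v^{m}\,\psi_0\,\Delta_{\mathbb{H}}\phi_R\,d\eta\,dt.$$
Letting $R\to\infty$: for the five principal terms, $0\le\phi_R\le1$ and $\phi_R\to1$ pointwise, and by \eqref{49} together with $v\in L^\infty_{loc}((0,T);L^\infty(\mathbb{H}^n))$ and $v_0\in L^\infty(\mathbb{H}^n)$ the functions $v(\tau,\cdot)\psi_0(\tau,\cdot)$, $v(0,\cdot)\psi_0(0,\cdot)$, $v^{\sigma}\psi_0$, $v^{m}\Delta_{\mathbb{H}}\psi_0$, $v\,\partial_t\psi_0$ are integrable, so dominated convergence makes each principal term tend to its counterpart in \eqref{weaksolution3}. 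For the two remainders, on $D_R$ one has $v^{m}\le\|v\|^{m}_{L^\infty((0,\tau);L^\infty)}$, $|\nabla_{\mathbb{H}}\phi_R|\le C$, $|\Delta_{\mathbb{H}}\phi_R|\le C$, so their sum is bounded by $C\|v\|^{m}_{L^\infty}\int_0^\tau\int_{D_R}\big(|\nabla_{\mathbb{H}}\psi_0|+|\psi_0|\big)\,d\eta\,dt$; as $|\nabla_{\mathbb{H}}\psi_0(t,\cdot)|,|\psi_0(t,\cdot)|\in L^1(\mathbb{H}^n)$ for each $t$, the inner integral over $D_R$ vanishes pointwise in $t$, and a final dominated-convergence step in $t$ sends the remainders to $0$. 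Collecting the limits gives \eqref{weaksolution3}.

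I do not foresee a genuine obstacle; the single point needing care is the integrability over $[0,\tau]$ of the dominating functions $t\mapsto\int_{\mathbb{H}^n}(|\psi_0|+|\nabla_{\mathbb{H}}\psi_0|+|\Delta_{\mathbb{H}}\psi_0|+|\partial_t\psi_0|)(t,\eta)\,d\eta$, and the essential boundedness of $v^m,v^\sigma$ up to $t=0$. Both hold automatically in every use of this lemma, where $\psi_0$ has the product form $\Theta(\eta)\,\xi(t)$ with $\Theta$ as in Lemma \ref{lemma4} and $\xi$ a bounded $C^1$ time cutoff, making those spatial $L^1$-norms uniformly bounded in $t$, while the solutions considered satisfy $v\in L^\infty_{loc}((0,\infty);L^\infty(\mathbb{H}^n))$ with $v_0\in L^\infty(\mathbb{H}^n)$. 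If one prefers a statement free of such conventions, it suffices to add to \eqref{49} that these spatial $L^1$-norms be locally bounded on $[0,T)$.
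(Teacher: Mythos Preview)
Your proof is correct and follows essentially the same strategy as the paper's: multiply $\psi_0$ by a smooth compactly supported spatial cutoff, expand $\Delta_{\mathbb{H}}(\psi_0\phi_R)$ via Lemma~\ref{lemma2}, control the two commutator terms using the $L^\infty$ bound on $v$ together with \eqref{49}, and pass to the limit by dominated convergence. The paper uses the product cutoff $\Phi(|x|/R)\,\Phi(|y|/R)\,\Phi(|\tau|/R^2)$ rather than a Kor\'anyi-radial one and kills the remainders through the explicit $R^{-1}$, $R^{-2}$ decay of the cutoff's horizontal derivatives (paired with $\sup_t\int|\psi_0|$ and $\sup_t\int|\nabla_{\mathbb{H}}\psi_0|$) instead of through tail smallness on the annulus, but these are cosmetic variations; your closing caveat about the $t$-uniformity of the $L^1$ norms in \eqref{49} applies equally to the paper's use of these suprema.
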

\begin{proof}
Let $0<T\leq\infty$. Suppose that $0\leq v\in  L_{loc}^\infty((0,T);L^\infty(\mathbb{H}^n))$ is a weak solution of (\ref{2}) on $[0,T)\times\mathbb{H}^n$, then we have
\begin{eqnarray*}
&{}&\int_{\mathbb{H}^n}v(\tau,\eta)\psi(\tau,\eta)\,d\eta-\int_{\mathbb{H}^n}v(0,\eta)\psi(0,\eta)\,d\eta\\
&{}&\qquad\qquad=\int_0^\tau\int_{\mathbb{H}^n}v^\sigma\psi(t,\eta)\,d\eta\,dt+\int_0^\tau\int_{\mathbb{H}^n}v^{m}\,\Delta_{\mathbb{H}}\psi(t,\eta)\,d\eta\,dt\\&{}&\qquad\qquad +\int_0^\tau\int_{\mathbb{H}^n}v\psi_t(t,\eta)\,d\eta\,dt,
\end{eqnarray*}
for any compactly supported $\psi\in C^{1,2}_{t,x}([0,T)\times\mathbb{H}^n)$, and $0\leq\tau<T$. Let $\tau\in[0,T)$ be a fixed number, and let
$$\psi(t,\eta):= \varphi_R(\eta)\psi_0(t,\eta):=\varphi_1(x)\varphi_1(y)\varphi_2(\tau) \psi_0(t,\eta),\quad t\in[0,T),\,\,\eta\in\mathbb{H}^n,$$
 with
$$\varphi_1(x):=\Phi\left(\frac{|x|}{R}\right),\quad\varphi_1(y):=\Phi\left(\frac{|y|}{R}\right),\quad\varphi_2(\tau):=\Phi\left(\frac{|\tau|}{R^2}\right),$$
where $R\gg 1$, and $\Phi$ is a smooth nonnegative non-increasing function such that
$$\Phi(r)=\left\{\begin{array}{l}
1\quad\text{if }0\leq r\leq 1/2,\\{}\\
{\searrow}\quad\text{if }1/2\leq r\leq 1,\\{}\\
{0}\quad\text{if } r\geq 1.
\end{array}\right.$$
Then
\begin{eqnarray*}
&{}&\int_{\mathcal{B}}v(\tau,\eta)\varphi_R(\eta)\psi_0(\tau,\eta)\,d\eta-\int_{\mathcal{B}}v(0,\eta)\varphi_R(\eta)\psi_0(0,\eta)\,d\eta\\
&{}&\qquad\qquad=\int_0^\tau\int_{\mathcal{B}}v^\sigma\varphi_R(\eta)\psi_0(t,\eta)\,d\eta\,dt+\int_0^\tau\int_{\mathcal{B}}v^{m}\,\Delta_{\mathbb{H}}(\varphi_R(\eta)\psi_0(t,\eta))\,d\eta\,dt \\&{}&\qquad\qquad+\int_0^\tau\int_{\mathcal{B}}v\varphi_R(\eta)\partial_t\psi_0(t,\eta)\,d\eta\,dt,
\end{eqnarray*}
where
$$\mathcal{B}=\{\eta=(x,y,\tau)\in\mathbb{H}^n;\,\,|x|^2,|y|^2,|\tau|\leq R^2\},$$ and we also denote $$\mathcal{C}=\{\eta=(x,y,\tau)\in\mathbb{H}^n;\,\,\frac{R^2}{2}\leq |x|^2,|y|^2,|\tau|\leq R^2\}.$$
Using Lemma \ref{lemma2}, we get
\begin{eqnarray}\label{50}
&{}&\int_{\mathcal{B}}v(\tau,\eta)\varphi_R(\eta)\psi_0(\tau,\eta)\,d\eta-\int_{\mathcal{B}}v(0,\eta)\varphi_R(\eta)\psi_0(0,\eta)\,d\eta\nonumber\\
&{}&=\int_0^\tau\int_{\mathcal{B}}v^\sigma\varphi_R(\eta)\psi_0(t,\eta)\,d\eta\,dt+\int_0^\tau\int_{\mathcal{C}}v^{m}\,\psi_0(t,\eta)\Delta_{\mathbb{H}}\varphi_R(\eta)\,d\eta\,dt\nonumber\\
&{}&\,\,\,+\,2\int_0^\tau\int_{\mathcal{C}}v^{m}\,\nabla_{\mathbb{H}}(\varphi_R(\eta))\nabla_{\mathbb{H}}(\psi_0(t,\eta))\,d\eta\,dt\nonumber\\
&{}&\,\,\,+\int_0^\tau\int_{\mathcal{B}}v^{m}\,\varphi_R(\eta)\Delta_{\mathbb{H}}\psi_0(t,\eta)\,d\eta\,dt+\int_0^\tau\int_{\mathcal{B}}v\varphi_R(\eta)\partial_t\psi_0(t,\eta)\,d\eta\,dt.
\end{eqnarray}
On the other hand,
$$I_R:=\left|\int_0^\tau\int_{\mathcal{C}}v^{m}\,\psi_0(t,\eta)\Delta_{\mathbb{H}}\varphi_R(\eta)\,d\eta\,dt\right|\leq\int_0^\tau\int_{\mathcal{C}}v^{m}\,|\psi_0(t,\eta)| \left|\Delta_{\mathbb{H}}\varphi_R(\eta)\right|\,d\eta\,dt.$$
Using \eqref{40}, we have
\begin{eqnarray*}
\left|\Delta_{\mathbb{H}}\varphi_R(\eta)\right|&=&\left|\Delta_{\mathbb{H}}\left(\varphi_1(x)\varphi_1(y)\varphi_2(\tau) \right)\right|\\
&\leq&\left|\Delta_x\varphi_1(x)\right|\varphi_1(y)\varphi_2(\tau)+\,\varphi_1(x)\left|\Delta_y\varphi_1(y)\right|\varphi_2(\tau)\\
&{}&+\,4(|x|^2+|y|^2)\varphi_1(x)\varphi_1(y)\left|\partial_\tau^2\varphi_2(\tau)\right| \\&{}&+4\sum_{j=1}^{n}|x_j|\varphi_1(x)\left|\partial_{y_j}\varphi_1(y)\right|\left|\partial_\tau\varphi_2(\tau)\right| \\
&{}&+4\sum_{j=1}^{n}|y_j|\varphi_1(y)\left|\partial_{x_j}\varphi_1(x)\right|\left|\partial_\tau\varphi_2(\tau)\right|,
\end{eqnarray*}
on $\mathcal{C}$. Substituting $\varphi_1$ and $\varphi_2$  we get
\begin{eqnarray*}
\left|\Delta_{\mathbb{H}}\varphi_R(\eta)\right|&\leq&\left|\Delta_x\left(\Phi\left(\frac{|x|}{R}\right)\right)\right|\Phi\left(\frac{|y|}{R}\right) \Phi\left(\frac{|\tau|}{R^2}\right)+\Phi\left(\frac{|x|}{R}\right)\left|\Delta_y\left(\Phi\left(\frac{|y|}{R}\right)\right)\right|\Phi\left(\frac{|\tau|}{R^2}\right)\\
&{}&+\,4(|x|^2+|y|^2)\Phi\left(\frac{|x|}{R}\right)\Phi\left(\frac{|y|}{R}\right)\left|\partial^2_\tau\left(\Phi\left(\frac{|\tau|}{R^2}\right)\right)\right| \\
&{}&+\,4\sum_{j=1}^{n}|x_j|\Phi\left(\frac{|x|}{R}\right)\left|\partial_{y_j}\left(\Phi\left(\frac{|y|}{R}\right)\right)\right|\left|\partial_\tau\left(\Phi\left(\frac{|\tau|}{R^2}\right)\right)\right| \\
&{}&+\,4\sum_{j=1}^{n}|y_j|\Phi\left(\frac{|y|}{R}\right)\left|\partial_{x_j}\left(\Phi\left(\frac{|x|}{R}\right)\right)\right|\left|\partial_\tau\left(\Phi\left(\frac{|\tau|}{R^2}\right)\right)\right|,
\end{eqnarray*}
on $\mathcal{C}$. By letting
$$\widetilde{x}=\frac{x}{R},\qquad\widetilde{y}=\frac{y}{R},\qquad\widetilde{\tau}=\frac{\tau}{R^2},$$
we conclude that
\begin{eqnarray*}
\left|\Delta_{\mathbb{H}}\varphi_R(\eta)\right|&\leq&{R^{-2}}\left|\Delta_{\widetilde{x}}\Phi\left(|\widetilde{x}|\right)\right|\Phi\left(|\widetilde{y}|\right)\Phi\left(|\widetilde{\tau}|\right)+ \Phi\left(|\widetilde{x}|\right){R^{-2}}\left|\Delta_{\widetilde{y}}\Phi\left(|\widetilde{y}|\right)\right|\Phi\left(|\widetilde{\tau}|\right)\\
&{}&+\,4{R^{2}}(|\widetilde{x}|^2+|{\widetilde{y}}|^2)\Phi\left(|\widetilde{x}|\right)\Phi\left(|\widetilde{y}|\right){R^{-4}}\left|\partial^2_{\widetilde{\tau}}\Phi\left(|\widetilde{\tau}|\right)\right| \\
&{}&+\,4\sum_{j=1}^{n}{R}|\widetilde{x}_j|\Phi\left(|\widetilde{x}|\right){R^{-1}}\left|\partial_{{\widetilde{y}}_j}\Phi\left(|\widetilde{y}|\right)\right|{R^{-2}}\left|\partial_{\widetilde{\tau}}\Phi\left(|\widetilde{\tau}|\right)\right| \\
&{}&+\,4\sum_{j=1}^{n}{R}|{\widetilde{y}}_j|\Phi\left(|\widetilde{y}|\right){R^{-1}}\left|\partial_{\widetilde{x}_j}\Phi\left(|\widetilde{x}|\right)\right|{R^{-2}}\left|\partial_{\widetilde{\tau}}\Phi\left(|\widetilde{\tau}|\right)\right|,
\end{eqnarray*}
on $\mathcal{C}$. Note that, as $\Phi\leq1$ and $\Phi\in C^\infty$ on $\mathcal{C}$, we can easily see that
$$\left|\Delta_{\mathbb{H}}\varphi_R(\eta)\right|\leq\,C\,{R^{-2}},\qquad\hbox{for all}\,\,\eta\in\mathcal{C},$$
and therefore
\begin{align*}I_R&\leq C\,R^{-2}\int_0^\tau\int_{\mathcal{C}}v^{m}\,|\psi_0(t,\eta)|\,d\eta\,dt\\&\leq C\,R^{-2}\tau\sup_{t\in[0,\tau]}\|v(t,\cdotp)\|_{L^\infty(\mathbb{H}^n)}^{m}\sup_{t\in[0,T)}\int_{\mathbb{H}^n}|\psi_0(t,\eta)|\,d\eta,\end{align*}
this implies, using \eqref{49}, that
\begin{equation}\label{46}
I_R\longrightarrow 0,\qquad \hbox{when}\,\,R\rightarrow+\infty.
\end{equation}
Similarly,
\begin{align*}J_R:&=\left|2\int_0^\tau\int_{\mathcal{C}}v^{m}\,\nabla_{\mathbb{H}}(\varphi_R(\eta))\nabla_{\mathbb{H}}(\psi_0(t,\eta))\,d\eta\,dt\right| \\&\leq2\int_0^\tau\int_{\mathcal{C}}v^{m}\,\left|\nabla_{\mathbb{H}}\varphi_R(\eta)\right|\left|\nabla_{\mathbb{H}}\psi_0(t,\eta)\right|\,d\eta\,dt.\end{align*}
Using \eqref{48}, we have
\begin{eqnarray*}
\left|\nabla_{\mathbb{H}}\varphi_R(\eta)\right|^2&\leq&\left|\nabla_x\varphi_1(x)\right|^2\varphi^2_1(y)\varphi^2_2(\tau)+\,\varphi^2_1(x)\left|\nabla_y\varphi_1(y)\right|^2\varphi^2_2(\tau)\\
&{}&+\,4\left(|x|^2+|y|^2\right)\varphi^2_1(x)\varphi^2_1(y)\left|\partial_\tau\varphi_2(\tau)\right|^2,
\end{eqnarray*}
on $\mathcal{C}$. Substituting $\varphi_1$ and $\varphi_2$  we get
\begin{eqnarray*}
\left|\nabla_{\mathbb{H}}\varphi_R(\eta)\right|^2&\leq&\left|\nabla_x\Phi\left(\frac{|x|}{R}\right)\right|^2\Phi^2\left(\frac{|y|}{R}\right)\Phi^2\left(\frac{|\tau|}{R^2}\right)+\,\Phi^2\left(\frac{|x|}{R}\right)\left|\nabla_y\Phi\left(\frac{|y|}{R}\right)\right|^2\Phi^2\left(\frac{|\tau|}{R^2}\right)\\
&{}&+\,{4}\left(|x|^2+|y|^2\right)\Phi^2\left(\frac{|x|}{R}\right)\Phi^2\left(\frac{|y|}{R}\right)\left|\partial_\tau\Phi\left(\frac{|\tau|}{R^2}\right)\right|^2,
\end{eqnarray*}
on $\mathcal{C}$. By letting
$$\widetilde{x}=\frac{x}{R},\qquad\widetilde{y}=\frac{y}{R},\qquad\widetilde{\tau}=\frac{\tau}{R^2},$$
we conclude that
\begin{eqnarray*}
\left|\nabla_{\mathbb{H}}\varphi_R(\eta)\right|^2&\leq&{R^{-2}}\left|\nabla_{\widetilde{x}}\Phi\left(|\widetilde{x}|\right)\right|^2\Phi^2\left(|\widetilde{y}|\right)\Phi^2\left(|\widetilde{\tau}|\right)+ \,\Phi^2\left(|\widetilde{x}|\right){R^{-2}}\left|\nabla_{\widetilde{y}}\Phi\left(|\widetilde{y}|\right)\right|^2\Phi^2\left(|\widetilde{\tau}|\right)\\
&{}&+\,{4}{R^{2}}\left(|\widetilde{x}|^2+|\widetilde{y}|^2\right)\Phi^2\left(|\widetilde{x}|\right)\Phi^2\left(|\widetilde{y}|\right){R^{-4}}\left|\partial_{\widetilde{\tau}}\Phi\left(|\widetilde{\tau}|\right)\right|^2,
\end{eqnarray*}
on $\mathcal{C}$. Note that, as $\Phi\leq1$ and $\Phi\in C^\infty$ on $\mathcal{C}$, we can easily see that
$$\left|\nabla_{\mathbb{H}}\varphi(\eta)\right|\leq\,C\,{R^{-1}},\qquad\hbox{for all}\,\,\eta\in\mathcal{C},$$
and therefore
\begin{align*}J_R&\leq C\,R^{-1}\int_0^\tau\int_{\mathcal{C}}v^{m}\,|\nabla_{\mathbb{H}}\psi_0(t,\eta)|\,d\eta\,dt\\&\leq C\,R^{-1}\tau\sup_{t\in[0,\tau]}\|v(t,\cdotp)\|_{L^\infty(\mathbb{H}^n)}^{m}\sup_{t\in[0,T)}\int_{\mathbb{H}^n}|\nabla_{\mathbb{H}}\psi_0(t,\eta)|\,d\eta,\end{align*}
this implies, using \eqref{49}, that
\begin{equation}\label{47}
J_R\longrightarrow 0,\qquad \hbox{when}\,\,R\rightarrow+\infty.
\end{equation}
Finally, letting $R\longrightarrow+\infty$ in \eqref{50} and using \eqref{49}, \eqref{46},\eqref{47} together with Lebesgue's dominated convergence theorem we conclude the result.
\end{proof}
\begin{proof}[Proof of Theorem \ref{theo1}]
(i) The proof is by contradiction. Suppose that $v$ is a nonnegative global weak solution of (\ref{2}), then, for all $T\gg1$, we have
\begin{eqnarray*}
&{}&\int_{\mathbb{H}^n}v(T,\eta)\psi(T,\eta)\,d\eta-\int_{\mathbb{H}^n}v(0,\eta)\psi(0,\eta)\,d\eta\\
&{}&\qquad\qquad=\int_0^T\int_{\mathbb{H}^n}v^\sigma\psi(t,\eta)\,d\eta\,dt+\int_0^T\int_{\mathbb{H}^n}v^{m}\,\Delta_{\mathbb{H}}\psi(t,\eta)\,d\eta\,dt\\&{}&\qquad\qquad+\int_0^T\int_{\mathbb{H}^n}v\psi_t(t,\eta)\,d\eta\,dt
\end{eqnarray*}
for all compactly supported $\psi\in C^{1,2}_{t,x}([0,\infty)\times\mathbb{H}^n)$.

We choose
$$\psi(t,\eta):= \varphi^\ell(\eta) \varphi^\ell_3(t):=\varphi_1^\ell(x)\varphi_1^\ell(y)\varphi_2^\ell(\tau) \varphi^\ell_3(t),$$
 with
$$\varphi_1(x):=\Phi\left(\frac{|x|}{T^{\alpha}}\right),\quad\varphi_1(y):=\Phi\left(\frac{|y|}{T^{\alpha}}\right),\quad\varphi_2(\tau):=\Phi\left(\frac{|\tau|}{T^{2\alpha}}\right),\quad\varphi_3(t):=\Phi\left(\frac{t}{T}\right),$$
where $\alpha=\frac{\sigma-m}{2(\sigma-1)}>0$, $\ell\gg 1$, and $\Phi$ is a smooth nonnegative non-increasing function such that
\[
\Phi(r)=\left\{\begin {array}{ll}
\displaystyle{1}&\displaystyle{\quad\text{if }0\leq r\leq 1/2,}\\\\
\displaystyle{\searrow}&\displaystyle{\quad\text{if }1/2\leq r\leq 1,}\\\\
\displaystyle{0}&\displaystyle{\quad\text {if }r\geq 1.}
\end {array}\right.
\]
Then
\begin{eqnarray}\label{9}
\int_0^T\int_{\mathcal{B}}v^\sigma\psi(t,\eta)\,d\eta\,dt+\int_{\mathcal{B}}v_0(\eta)\varphi^\ell(\eta)\,d\eta&=&-\int_0^T\int_{\mathcal{C}}v^{m}\,\varphi^\ell_3(t)\,\Delta_{\mathbb{H}}\varphi^\ell(\eta)\,d\eta\,dt\nonumber\\
&{}&-\int_{\frac{T}{2}}^T\int_{\mathcal{B}}v\,\varphi^\ell(\eta)\partial_t(\varphi^\ell_3(t))\,d\eta\,dt\nonumber\\
&=&I_1+I_2,
\end{eqnarray}
where
$$\mathcal{B}=\{\eta=(x,y,\tau)\in\mathbb{H}^n;\,\,|x|^2,|y|^2,|\tau|\leq T^{2\alpha}\},$$ and $$\mathcal{C}=\{\eta=(x,y,\tau)\in\mathbb{H}^n;\,\,\frac{T^{\alpha}}{2}\leq |x|,|y|\leq T^{\alpha},\,\frac{T^{2\alpha}}{2}\leq|\tau|\leq T^{2\alpha}\}.$$
Let us start to estimate $I_1$. As $\sigma>m$, using the following Young's inequality
$$ab\leq\frac{1}{4}a^{\frac{\sigma}{m}}+C\,b^{\frac{\sigma}{\sigma-m}},$$
we have
\begin{eqnarray}\label{10}
I_1&\leq&\int_0^T\int_{\mathcal{C}}v^{m}\,\varphi^\ell_3(t)\,\left|\Delta_{\mathbb{H}}\varphi^\ell(\eta)\right|\,d\eta\,dt\nonumber\\
&=&\int_0^T\int_{\mathcal{C}}v^{m}\,\psi^{\frac{m}{\sigma}}(t,\eta)\psi^{-\frac{m}{\sigma}}(t,\eta)\varphi^\ell_3(t)\,\left|\Delta_{\mathbb{H}}\varphi^\ell(\eta)\right|\,d\eta\,dt\nonumber\\
&\leq&\frac{1}{4}\int_0^T\int_{\mathcal{B}}v^\sigma\psi(t,\eta)\,d\eta\,dt\nonumber\\&+&\,C\,\int_0^T\int_{\mathcal{C}}\psi^{-\frac{m}{\sigma-m}}(t,\eta)\varphi^{\frac{\ell \sigma}{\sigma-m}}_3(t)\,\left|\Delta_{\mathbb{H}}\varphi^\ell(\eta)\right|^{\frac{\sigma}{\sigma-m}}\,d\eta\,dt.
\end{eqnarray}
To estimate $I_2$, using the following Young's inequality
$$ab\leq\frac{1}{4}a^{\sigma}+C\,b^{\frac{\sigma}{\sigma-1}},$$
and the fact that $\sigma>1$, we have
\begin{eqnarray}\label{11}
I_2&\leq&\int_0^T\int_{\mathcal{B}}v\,\varphi^\ell(\eta)\left|\partial_t(\varphi^\ell_3(t))\right|\,d\eta\,dt\nonumber\\
&=&\int_0^T\int_{\mathcal{C}}v\,\psi^{\frac{1}{\sigma}}(t,\eta)\psi^{-\frac{1}{\sigma}}(t,\eta)\varphi^\ell(\eta)\left|\partial_t(\varphi^\ell_3(t))\right|\,d\eta\,dt\nonumber\\
&\leq&\frac{1}{4}\int_0^T\int_{\mathcal{B}}v^\sigma\psi(t,\eta)\,d\eta\,dt\nonumber\\&+&\,C\,\int_0^T\int_{\mathcal{C}}\psi^{-\frac{1}{\sigma-1}}(t,\eta)\varphi^{\frac{\ell \sigma}{\sigma-1}}(\eta)\,\left|\partial_t\varphi_3^\ell(t)\right|^{\frac{\sigma}{\sigma-1}}\,d\eta\,dt.
\end{eqnarray}
Inserting \eqref{10}-\eqref{11} into \eqref{9}, we arrive at
\begin{eqnarray}\label{16}
\frac{1}{2}\int_0^T\int_{\mathcal{B}}v^\sigma\psi(t,\eta)\,d\eta\,dt&+&\int_{\mathcal{B}}v_0(\eta)\varphi^\ell(\eta)\,d\eta
\nonumber\\&\leq&C\,\int_0^T\int_{\mathcal{C}}\psi^{-\frac{m}{\sigma-m}}(t,\eta)\varphi^{\frac{\ell \sigma}{\sigma-m}}_3(t)\,\left|\Delta_{\mathbb{H}}\varphi^\ell(\eta)\right|^{\frac{\sigma}{\sigma-m}}\,d\eta\,dt\nonumber\\
&{}&\,+\,C\,\int_0^T\int_{\mathcal{C}}\psi^{-\frac{1}{\sigma-1}}(t,\eta)\varphi^{\frac{\ell \sigma}{\sigma-1}}(\eta)\,\left|\partial_t\varphi_3^\ell(t)\right|^{\frac{\sigma}{\sigma-1}}\,d\eta\,dt\nonumber\\
&=&C\,\int_0^T\int_{\mathcal{C}}\varphi_3^{\ell}(t)\,\varphi^{-\frac{\ell m}{\sigma-m}}(\eta)\left|\Delta_{\mathbb{H}}\varphi^\ell(\eta)\right|^{\frac{\sigma}{\sigma-m}}\,d\eta\,dt\nonumber\\
&{}&\,+\,C\,\int_0^T\int_{\mathcal{C}}\varphi^\ell(\eta)\,\varphi_3^{-\frac{\ell}{\sigma-1}}(t)\,\left|\partial_t\varphi_3^\ell(t)\right|^{\frac{\sigma}{\sigma-1}}\,d\eta\,dt\nonumber\\
&=&J_1+J_2.
\end{eqnarray}
Let us estimate $J_2$. As $\partial_t\varphi_3^\ell(t)=\ell\varphi_3^{\ell-1}(t)\partial_t\varphi_3(t)$, we have
\begin{eqnarray*}
J_2&\leq& C\,\int_{\mathcal{C}}\varphi^\ell(\eta)\,d\eta\int_0^T\varphi_3^{\ell-\frac{\sigma}{\sigma-1}}(t)\,\left|\partial_t\varphi_3(t)\right|^{\frac{\sigma}{\sigma-1}}\,dt\\
&=& C\,\int_{\mathcal{C}}\varphi^\ell(\eta)\,d\eta\int_0^T\Phi^{\ell-\frac{\sigma}{\sigma-1}}\left(\frac{t}{T}\right)\,\left|\partial_t\Phi\left(\frac{t}{T}\right)\right|^{\frac{\sigma}{\sigma-1}}\,dt.
\end{eqnarray*}
Letting
$$\widetilde{x}=\frac{x}{T^\alpha},\qquad\widetilde{y}=\frac{y}{T^\alpha},\qquad\widetilde{\tau}=\frac{\tau}{T^{2\alpha}},\qquad \widetilde{t}=\frac{t}{T},$$
and using the fact that $\varphi\leq1$ and meas$(\mathcal{C})= C\,T^{\alpha Q}$, we get
\begin{equation}\label{12}
J_2\leq C\,T^{\alpha Q-\frac{\sigma}{\sigma-1}+1}\int_0^1\Phi^{\ell-\frac{\sigma}{\sigma-1}}(\tilde{t})\,\left|\Phi^{\prime}(\tilde{t})\right|^{\frac{\sigma}{\sigma-1}}\,d\widetilde{t}\leq C\,T^{\alpha Q-\frac{\sigma}{\sigma-1}+1}.
\end{equation}
To estimate $J_1$, using \eqref{40}, we have
\begin{eqnarray*}
\left|\Delta_{\mathbb{H}}\varphi^\ell(\eta)\right|&=&\left|\Delta_{\mathbb{H}}\left(\varphi_1^\ell(x)\varphi_1^\ell(y)\varphi_2^\ell(\tau) \right)\right|\\
&\leq&\left|\Delta_x\varphi_1^\ell(x)\right|\varphi_1^\ell(y)\varphi_2^\ell(\tau)\\
&{}&+\,\varphi_1^\ell(x)\left|\Delta_y\varphi_1^\ell(y)\right|\varphi_2^\ell(\tau)\\
&{}&+\,4(|x|^2+|y|^2)\varphi_1^\ell(x)\varphi_1^\ell(y)\left|\partial_\tau^2\varphi_2^\ell(\tau)\right| \\
&{}&+4\sum_{j=1}^{n}|x_j|\varphi_1^\ell(x)\left|\partial_{y_j}\varphi_1^\ell(y)\right|\left|\partial_\tau\varphi_2^\ell(\tau)\right| \\
&{}&+4\sum_{j=1}^{n}|y_j|\varphi_1^\ell(y)\left|\partial_{x_j}\varphi_1^\ell(x)\right|\left|\partial_\tau\varphi_2^\ell(\tau)\right|,
\end{eqnarray*}
on $\mathcal{C}$. So
\begin{eqnarray*}
\left|\Delta_{\mathbb{H}}\varphi^\ell(\eta)\right|&\leq&\left[\ell(\ell-1)\varphi_1^{\ell-2}(x)|\nabla_x\varphi_1(x)|^2+\ell\varphi_1^{\ell-1}(x)|\Delta_x\varphi_1(x)|\right]\varphi_1^\ell(y)\varphi_2^\ell(\tau)\\
&{}&+\,\varphi_1^\ell(x)\left[\ell(\ell-1)\varphi_1^{\ell-2}(y)|\nabla_y\varphi_1(y)|^2+\ell\varphi_1^{\ell-1}(y)|\Delta_y\varphi_1(y)|\right]\varphi_2^\ell(\tau)\\
&{}&+\,4(|x|^2+|y|^2)\varphi_1^\ell(x)\varphi_1^\ell(y)\left[\ell(\ell-1)\varphi_2^{\ell-2}(\tau)|\partial_\tau\varphi_2(\tau)|^2+\ell\varphi_2^{\ell-1}(\tau)|\partial^2_\tau\varphi_2(\tau)|\right] \\
&{}&+\,4\sum_{j=1}^{n}|x_j|\varphi_1^\ell(x)\left[\ell\varphi_1^{\ell-1}(y)\left|\partial_{y_j}\varphi_1(y)\right|\right]\left[\ell\varphi_2^{\ell-1}(\tau)\left|\partial_\tau\varphi_2(\tau)\right|\right] \\
&{}&+\,4\sum_{j=1}^{n}|y_j|\varphi_1^\ell(y)\left[\ell\varphi_1^{\ell-1}(x)\left|\partial_{x_j}\varphi_1(x)\right|\right]\left[\ell\varphi_2^{\ell-1}(\tau)\left|\partial_\tau\varphi_2(\tau)\right|\right],
\end{eqnarray*}
on $\mathcal{C}$. Substituting $\varphi_1$ and $\varphi_2$  we get
\small\begin{align*}
&\left|\Delta_{\mathbb{H}}\varphi^\ell(\eta)\right|\leq\left[\ell(\ell-1)\Phi^{\ell-2}\left(\frac{|x|}{T^\alpha}\right)\left|\nabla_x\Phi\left(\frac{|x|}{T^\alpha}\right)\right|^2 +\ell\Phi^{\ell-1}\left(\frac{|x|}{T^\alpha}\right)\left|\Delta_x\Phi\left(\frac{|x|}{T^\alpha}\right)\right|\right]\Phi^\ell\left(\frac{|y|}{T^\alpha}\right)\Phi^\ell\left(\frac{|\tau|}{T^{2\alpha}}\right)\\
&+\,\Phi^\ell\left(\frac{|x|}{T^\alpha}\right)\left[\ell(\ell-1)\Phi^{\ell-2}\left(\frac{|y|}{T^\alpha}\right)\left|\nabla_y\Phi\left(\frac{|y|}{T^\alpha}\right)\right|^2 +\ell\Phi^{\ell-1}\left(\frac{|y|}{T^\alpha}\right)\left|\Delta_y\Phi\left(\frac{|y|}{T^\alpha}\right)\right|\right]\Phi^\ell\left(\frac{|\tau|}{T^{2\alpha}}\right)\\
&+\,4(|x|^2+|y|^2)\Phi^\ell\left(\frac{|x|}{T^\alpha}\right)\Phi^\ell\left(\frac{|y|}{T^\alpha}\right)\left[\ell(\ell-1)\Phi^{\ell-2}\left(\frac{|\tau|}{T^{2\alpha}}\right)\left|\partial_\tau\Phi\left(\frac{|\tau|}{T^{2\alpha}}\right)\right|^2+\ell\Phi^{\ell-1}\left(\frac{|\tau|}{T^{2\alpha}}\right)\left|\partial^2_\tau\Phi\left(\frac{|\tau|}{T^{2\alpha}}\right)\right|\right] \\
&+\,4\sum_{j=1}^{n}|x_j|\Phi^\ell\left(\frac{|x|}{T^\alpha}\right)\left[\ell\Phi^{\ell-1}\left(\frac{|y|}{T^\alpha}\right)\left|\partial_{y_j}\Phi\left(\frac{|y|}{T^\alpha}\right)\right|\right]\left[\ell\Phi^{\ell-1}\left(\frac{|\tau|}{T^{2\alpha}}\right)\left|\partial_\tau\Phi\left(\frac{|\tau|}{T^{2\alpha}}\right)\right|\right] \\
&+\,4\sum_{j=1}^{n}|y_j|\Phi^\ell\left(\frac{|y|}{T^\alpha}\right)\left[\ell\Phi^{\ell-1}\left(\frac{|x|}{T^\alpha}\right)\left|\partial_{x_j}\Phi\left(\frac{|x|}{T^\alpha}\right)\right|\right]\left[\ell\Phi^{\ell-1}\left(\frac{|\tau|}{T^{2\alpha}}\right)\left|\partial_\tau\Phi\left(\frac{|\tau|}{T^{2\alpha}}\right)\right|\right],
\end{align*}
on $\mathcal{C}$. By letting
$$\widetilde{x}=\frac{x}{T^\alpha},\qquad\widetilde{y}=\frac{y}{T^\alpha},\qquad\widetilde{\tau}=\frac{\tau}{T^{2\alpha}}.$$
we conclude that
\begin{align*}
&\left|\Delta_{\mathbb{H}}\varphi^\ell(\eta)\right|\leq\left[\ell(\ell-1)\Phi^{\ell-2}(|\widetilde{x}|){T^{-2\alpha}} \left|\nabla_{\widetilde{x}}\Phi(|\widetilde{x}|)\right|^2+\ell\Phi^{\ell-1}(|\widetilde{x}|){T^{-2\alpha}}\left|\Delta_{\widetilde{x}}\Phi(|\widetilde{x}|)\right|\right]\Phi^\ell(|\widetilde{y}|)\Phi^\ell(|\widetilde{\tau}|)\\
&+\,\Phi^\ell(|\widetilde{x}|)\left[\ell(\ell-1)\Phi^{\ell-2}(|\widetilde{y}|){T^{-2\alpha}}\left|\nabla_{\widetilde{y}}\Phi(|\widetilde{y}|)\right|^2 +\ell\Phi^{\ell-1}(|\widetilde{y}|){T^{-2\alpha}}\left|\Delta_{\widetilde{y}}\Phi(|\widetilde{y}|)\right|\right]\Phi^\ell(|\widetilde{\tau}|)\\
&+\,4\,{T^{2\alpha}}(|\widetilde{x}|^2+|\widetilde{y}|^2)\Phi^\ell(|\widetilde{x}|)\Phi^\ell(|\widetilde{y}|)\left[\ell(\ell-1)\Phi^{\ell-2}(|\widetilde{\tau}|){T^{-4\alpha}}\left|\partial_{\widetilde{\tau}}\Phi(|\widetilde{\tau}|)\right|^2 +\ell\Phi^{\ell-1}(|\widetilde{\tau}|){T^{-4\alpha}}\left|\partial^2_{\widetilde{\tau}}\Phi(|\widetilde{\tau}|)\right|\right] \\
&+\,4\sum_{j=1}^{n}{T^{\alpha}}|\widetilde{x}_j|\Phi^\ell(|\widetilde{x}|)\left[\ell\Phi^{\ell-1}(|\widetilde{y}|){T^{-\alpha}} \left|\partial_{{\widetilde{y}}_j}\Phi(|\widetilde{y}|)\right|\right]\left[\ell\Phi^{\ell-1}(|\widetilde{\tau}|){T^{-2\alpha}}\left|\partial_{\widetilde{\tau}}\Phi(|\widetilde{\tau}|)\right|\right] \\
&+\,4\sum_{j=1}^{n}{T^{\alpha}}|\widetilde{y}_j|\Phi^\ell(|\widetilde{y}|)\left[\ell\Phi^{\ell-1}(|\widetilde{x}|){T^{-\alpha}}\left|\partial_{{\widetilde{x}}_j} \Phi(|\widetilde{x}|)\right|\right]\left[\ell\Phi^{\ell-1}(|\widetilde{\tau}|){T^{-2\alpha}}\left|\partial_{\widetilde{\tau}}\Phi(|\widetilde{\tau}|)\right|\right],
\end{align*}
on $\mathcal{C}$. Note that, as
$$\Phi\leq1\,\Rightarrow \Phi^{\ell}\leq \Phi^{\ell-1}\leq \Phi^{\ell-2},$$
we can easily see that
$$\left|\Delta_{\mathbb{H}}\varphi^\ell(\eta)\right|\leq\,C\,{T^{-2\alpha}}\left[\Phi^\ell(|\widetilde{x}|)\Phi^\ell(|\widetilde{y}|)\Phi^\ell(|\widetilde{\tau}|)\right]^{\ell-2},\qquad\hbox{for all}\,\,\eta\in\mathcal{C},$$
and therefore, using the fact that $\varphi_3\leq1$, we conclude that
\begin{eqnarray}\label{13}
J_1&=&C\,\int_0^T\int_{\mathcal{C}}\varphi_3^{\ell}(t)\,\varphi^{-\frac{\ell m}{\sigma-m}}(\eta)\left|\Delta_{\mathbb{H}}\varphi^\ell(\eta)\right|^{\frac{\sigma}{\sigma-m}}\,d\eta\,dt\nonumber\\
&\leq&C\,T^{-\frac{2\alpha\sigma}{\sigma-m}}\int_0^T\varphi_3^{\ell}(t)\,dt\int_{\mathcal{\widetilde{C}}} \left[\Phi(|\widetilde{x}|)\Phi(|\widetilde{y}|)\Phi(|\widetilde{\tau}|)\right]^{-\frac{\ell m}{\sigma-m}}\left[\Phi^\ell(|\widetilde{x}|)\Phi^\ell(|\widetilde{y}|)\Phi^\ell(|\widetilde{\tau}|)\right]^{\frac{\sigma(\ell-2)}{\sigma-m}}T^{\alpha Q}\,d\widetilde{\eta}\nonumber\\
&\leq&C\,T^{-\frac{2\alpha\sigma}{\sigma-m}+1+\alpha Q}\int_{\mathcal{\widetilde{C}}} \left[\Phi(|\widetilde{x}|)\Phi(|\widetilde{y}|)\Phi(|\widetilde{\tau}|)\right]^{\frac{\sigma\ell(\ell-2-\frac{m}{\sigma})}{\sigma-m}}\,d\widetilde{\eta}\nonumber\\
&\leq&C\,T^{-\frac{2\alpha\sigma}{\sigma-m}+1+\alpha Q},
\end{eqnarray}
where we have used the fact that $\ell\gg1$.\\
Combining \eqref{16}-\eqref{13} and taking into account that $\alpha=\frac{\sigma-m}{2(\sigma-1)}$, we get
\begin{equation}\label{17}
\frac{1}{2}\int_0^T\int_{\mathcal{B}}v^\sigma\psi(t,\eta)\,d\eta\,dt+\int_{\mathcal{B}}v_0(\eta)\varphi^\ell(\eta)\,d\eta\leq C\,T^{\frac{\sigma-m}{2(\sigma-1)} Q-\frac{\sigma}{\sigma-1}+1}.
\end{equation}
If $\sigma< m+\frac{2}{Q}$, we can easily see that $\frac{\sigma-m}{2(\sigma-1)} Q-\frac{\sigma}{\sigma-1}+1<0$, and then, using the monotone convergence theorem and the fact that $\psi(t,\eta)\rightarrow 1$ as $T\rightarrow\infty$, we conclude that
$$
0<\int_{\mathbb{H}^n}v_0(\eta)\,d\eta\leq \frac{1}{2}\int_0^\infty\int_{\mathbb{H}^n}v^\sigma(x,t)\,d\eta\,dt +\int_{\mathbb{H}^n}v_0(\eta)\,d\eta\leq0;
$$
contradiction.\\
For the critical case $\sigma= m+\frac{2}{Q}$, we can see first, using again \eqref{17} and letting $T\rightarrow\infty$, that
$$v\in L^\sigma((0,\infty)\times\mathbb{H}^n),$$
which implies that
\begin{eqnarray}\label{18}
\lim_{T\rightarrow\infty}\int_0^T\int_{\mathcal{C}}v^\sigma\psi(t,\eta)\,d\eta\,dt&=&\lim_{T\rightarrow\infty}\int_0^T\int_{\mathcal{B}}v^\sigma\psi(t,\eta)\,d\eta\,dt-\lim_{T\rightarrow\infty}\int_0^T\int_{\mathcal{C}_0}v^\sigma\psi(t,\eta)\,d\eta\,dt\nonumber\\
&=&\int_0^\infty\int_{\mathbb{H}^n}v^\sigma(x,t)\,d\eta\,dt-\int_0^\infty\int_{\mathbb{H}^n}v^\sigma(x,t)\,d\eta\,dt\nonumber\\
&=&0,
\end{eqnarray}
where
\begin{equation}\label{C_0}
\mathcal{C}_0=\{\eta=(x,y,\tau)\in\mathbb{H}^n;\,\, |x|,|y|\leq \frac{T^{\alpha}}{2},\,|\tau|\leq \frac{T^{2\alpha}}{2}\},
\end{equation}
and
\begin{eqnarray}\label{19}
\lim_{T\rightarrow\infty}\int_{\frac{T}{2}}^T\int_{\mathcal{B}}v^\sigma\psi(t,\eta)\,d\eta\,dt&=&\lim_{T\rightarrow\infty}\int_0^T\int_{\mathcal{B}}v^\sigma\psi(t,\eta)\,d\eta\,dt-\lim_{T\rightarrow\infty}\int_0^{\frac{T}{2}}\int_{\mathcal{B}}v^\sigma\psi(t,\eta)\,d\eta\,dt\nonumber\\
&=&\int_0^\infty\int_{\mathbb{H}^n}v^\sigma(x,t)\,d\eta\,dt-\int_0^\infty\int_{\mathbb{H}^n}v^\sigma(x,t)\,d\eta\,dt\nonumber\\
&=&0.
\end{eqnarray}
On the other hand, we need to use H\"{o}lder's inequality instead of Young's one in the estimations of $I_1$ and $I_2$, and to refine them. Indeed,
\begin{eqnarray}\label{14}
I_1&\leq&\int_0^T\int_{\mathcal{C}}v^{m}\,\varphi^\ell_3(t)\,\left|\Delta_{\mathbb{H}}\varphi^\ell(\eta)\right|\,d\eta\,dt\nonumber\\
&=&\int_0^T\int_{\mathcal{C}}v^{m}\,\psi^{\frac{m}{\sigma}}(t,\eta)\psi^{-\frac{m}{\sigma}}(t,\eta)\varphi^\ell_3(t)\,\left|\Delta_{\mathbb{H}}\varphi^\ell(\eta)\right|\,d\eta\,dt\nonumber\\
&\leq&\left(\int_0^T\int_{\mathcal{C}}v^\sigma\psi(t,\eta)\,d\eta\,dt\right)^{\frac{\sigma}{m}}\left(\int_0^T\int_{\mathcal{C}}\psi^{-\frac{m}{\sigma-m}}(t,\eta)\varphi^{\frac{\ell \sigma}{\sigma-m}}_3(t)\,\left|\Delta_{\mathbb{H}}\varphi^\ell(\eta)\right|^{\frac{\sigma}{\sigma-m}}\,d\eta\,dt\right)^{\frac{\sigma-m}{\sigma}}\nonumber\\
&=&\left(\int_0^T\int_{\mathcal{C}}v^\sigma\psi(t,\eta)\,d\eta\,dt\right)^{\frac{\sigma}{m}}J_1^{\frac{\sigma-m}{\sigma}},
\end{eqnarray}
and
\begin{eqnarray}\label{15}
I_2&\leq&\int_{\frac{T}{2}}^T\int_{\mathcal{B}}v\,\varphi^\ell(\eta)\left|\partial_t(\varphi^\ell_3(t))\right|\,d\eta\,dt\nonumber\\
&=&\int_{\frac{T}{2}}^T\int_{\mathcal{C}}v\,\psi^{\frac{1}{\sigma}}(t,\eta)\psi^{-\frac{1}{\sigma}}(t,\eta)\varphi^\ell(\eta)\left|\partial_t(\varphi^\ell_3(t))\right|\,d\eta\,dt\nonumber\\
&\leq&\left(\int_{\frac{T}{2}}^T\int_{\mathcal{B}}v^\sigma\psi(t,\eta)\,d\eta\,dt\right)^{\frac{1}{\sigma}}\left(\int_0^T\int_{\mathcal{C}}\psi^{-\frac{1}{\sigma-1}}(t,\eta)\varphi^{\frac{\ell \sigma}{\sigma-1}}(\eta)\,\left|\partial_t\varphi_3^\ell(t)\right|^{\frac{\sigma}{\sigma-1}}\,d\eta\,dt\right)^{\frac{\sigma-1}{\sigma}}\nonumber\\
&\leq&\left(\int_{\frac{T}{2}}^T\int_{\mathcal{B}}v^\sigma\psi(t,\eta)\,d\eta\,dt\right)^{\frac{1}{\sigma}}J_2^{\frac{\sigma-1}{\sigma}}.
\end{eqnarray}
Inserting \eqref{14}-\eqref{15} into \eqref{9}, we arrive at
$$
\int_{\mathcal{B}}v_0(\eta)\varphi^\ell(\eta)\,d\eta\leq\left(\int_0^T\int_{\mathcal{C}}v^\sigma\psi(t,\eta)\,d\eta\,dt\right)^{\frac{\sigma}{m}}J_1^{\frac{\sigma-m}{\sigma}}+\left(\int_{\frac{T}{2}}^T\int_{\mathcal{B}}v^\sigma\psi(t,\eta)\,d\eta\,dt\right)^{\frac{1}{\sigma}}J_2^{\frac{\sigma-1}{\sigma}}.
$$
By letting
$$\widetilde{x}=\frac{x}{T^\alpha},\qquad\widetilde{y}=\frac{y}{T^\alpha},\qquad\widetilde{\tau}=\frac{\tau}{T^{2\alpha}},\qquad \widetilde{t}=\frac{t}{T},$$
inside $J_1$ and $J_2$, using their estimates and that $\sigma= m+\frac{2}{Q}$, we obtain
\begin{equation}\label{A1}
\int_{\mathcal{B}}v_0(\eta)\varphi^\ell(\eta)\,d\eta\leq C\,\left(\int_0^T\int_{\mathcal{C}}v^\sigma\psi(t,\eta)\,d\eta\,dt\right)^{\frac{\sigma}{m}}+\,C\,\left(\int_{\frac{T}{2}}^T\int_{\mathcal{B}}v^\sigma\psi(t,\eta)\,d\eta\,dt\right)^{\frac{1}{\sigma}}.
\end{equation}
Finally, using \eqref{18},\eqref{19}, \eqref{A1}, the dominated convergence theorem, and the fact that $\psi(t,\eta)\rightarrow 1$ as $T\rightarrow\infty$, we conclude that
$$
0<\int_{\mathbb{H}^n}v_0(\eta)\,d\eta\leq 0;
$$
contradiction.\\

(ii) As $T>1$, we have
\begin{eqnarray*}
\int_{\mathcal{B}}v_0(\eta)\psi(0,\eta)\,d\eta&=&\int_{\mathcal{B}}v_0(\eta)\varphi^\ell(\eta)\,d\eta
\geq \int_{\mathcal{C}_0}v_0(\eta)\varphi^\ell(\eta)\,d\eta\\
&=&\int_{\mathcal{C}_0}v_0(\eta)\,d\eta
\geq \varepsilon \int_{\mathcal{C}_0}(1+|\eta|_{_{\mathbb{H}}}^2)^{-\gamma/2}\,d\eta\\
&\geq& \varepsilon\,C \int_{\mathcal{C}_0}\left(\frac{T^{2\alpha}}{2}+ \frac{T^{2\alpha}}{2}\right)^{-\gamma/2}\,d\eta\\
&=&\varepsilon\,C T^{-\gamma\alpha}\,\hbox{meas}(\mathcal{C}_0)\\
&=&\varepsilon\,C T^{\alpha(Q-\gamma)},
\end{eqnarray*}
where $\mathcal{C}_0$ is defined in \eqref{C_0}. Therefore by repeating the same calculation as in the subcritical case (i) with $\alpha=\frac{\sigma-m}{2(\sigma-1)}$, we get
$$\varepsilon\,C T^{\frac{\sigma-m}{2(\sigma-1)}(Q-\gamma)}+\frac{1}{2}\int_0^T\int_{\mathcal{B}}v^\sigma\psi(t,\eta)\,d\eta\,dt\leq C\,T^{\frac{\sigma-m}{2(\sigma-1)} Q-\frac{\sigma}{\sigma-1}+1},$$
which implies
$$\varepsilon\,C T^{\frac{\sigma-m}{2(\sigma-1)}(Q-\gamma)}\leq C\,T^{\frac{\sigma-m}{2(\sigma-1)} Q-\frac{\sigma}{\sigma-1}+1},$$
that is
$$\varepsilon\leq C\,T^{\frac{\sigma-m}{2(\sigma-1)} \gamma-\frac{\sigma}{\sigma-1}+1}.$$
As $\sigma<m+\frac{2}{\gamma}\Longleftrightarrow {\frac{\sigma-m}{2(\sigma-1)} \gamma-\frac{\sigma}{\sigma-1}+1}<0$, then, by passing to the limit, as $T$ goes to $\infty$, we get a contradiction.

(iii) Let $m=\sigma.$ In this case, using Lemma \ref{lemma3}, we may replace, in the test function, $\varphi^\ell(\eta)$ by $\Theta(\eta)$ where $\Theta$ is defined in Lemma \ref{lemma4} with $\varepsilon=\frac{1}{4(2+Q)}$ i.e.
$$
-\Delta_{\mathbb{H}}\Theta(\eta)\leq\frac{1}{2}\Theta(\eta),\qquad\hbox{for all}\,\,\eta\in\mathbb{H}^n.
$$
Therefore, by repeating the same calculation as before, we have from \eqref{9}
\begin{align*}
&\int_0^T\int_{\mathbb{H}^n}v^\sigma\psi(t,\eta)\,d\eta\,dt+\int_{\mathbb{H}^n}v_0(\eta)\Theta(\eta)\,d\eta\leq\int_0^T\int_{\mathbb{H}^n}v^{\sigma}\,\varphi^\ell_3(t)\,(-\Delta_{\mathbb{H}})\Theta(\eta)\,d\eta\,dt \\&-\int_{\frac{T}{2}}^T\int_{\mathbb{H}^n}v\,\Theta(\eta)\partial_t(\varphi^\ell_3(t))\,d\eta\,dt\nonumber\\
&\leq\frac{1}{2}\int_0^T\int_{\mathbb{H}^n}v^{\sigma}\,\varphi^\ell_3(t)\,\Theta(\eta)\,d\eta\,dt-\int_{\frac{T}{2}}^T\int_{\mathbb{H}^n}v\,\Theta(\eta)\partial_t(\varphi^\ell_3(t))\,d\eta\,dt,
\end{align*}
which is equivalent to
$$
\frac{1}{2}\int_0^T\int_{\mathbb{H}^n}v^\sigma\psi(t,\eta)\,d\eta\,dt+\int_{\mathbb{H}^n}v_0(\eta)\Theta(\eta)\,d\eta\leq-\int_{\frac{T}{2}}^T\int_{\mathbb{H}^n}v\,\Theta(\eta)\partial_t(\varphi^\ell_3(t))\,d\eta\,dt=I_2
$$
where $I_2$ is introduced above. Then, using \eqref{11}, we get
\begin{eqnarray*}
&{}&\frac{1}{2}\int_0^T\int_{\mathbb{H}^n}v^\sigma\psi(t,\eta)\,d\eta\,dt+\int_{\mathbb{H}^n}v_0(\eta)\Theta(\eta)\,d\eta\\
&{}&\leq\frac{1}{4}\int_0^T\int_{\mathbb{H}^n}v^\sigma\psi(t,\eta)\,d\eta\,dt+\,C\,\int_0^T\int_{\mathbb{H}^n}\psi^{-\frac{1}{\sigma-1}}(t,\eta)\Theta^{\frac{\sigma}{\sigma-1}}(\eta)\,\left|\partial_t\varphi_3^\ell(t)\right|^{\frac{\sigma}{\sigma-1}}\,d\eta\,dt,
\end{eqnarray*}
i.e.
$$
\frac{1}{4}\int_0^T\int_{\mathbb{H}^n}v^\sigma\psi(t,\eta)\,d\eta\,dt+\int_{\mathbb{H}^n}v_0(\eta)\Theta(\eta)\,d\eta\\
\leq\,C\,\int_0^T\int_{\mathbb{H}^n}\psi^{-\frac{1}{\sigma-1}}(t,\eta)\Theta^{\frac{\sigma}{\sigma-1}}(\eta)\,\left|\partial_t\varphi_3^\ell(t)\right|^{\frac{\sigma}{\sigma-1}}\,d\eta\,dt,
$$
and so
\begin{eqnarray*}
\int_{\mathbb{H}^n}v_0(\eta)\Theta(\eta)\,d\eta&\leq&\,C\,\int_0^T\int_{\mathbb{H}^n}\psi^{-\frac{1}{\sigma-1}}(t,\eta)\Theta^{\frac{\sigma}{\sigma-1}}(\eta)\,\left|\partial_t\varphi_3^\ell(t)\right|^{\frac{\sigma}{\sigma-1}}\,d\eta\,dt\\
&=&C\,\int_0^T\int_{\mathbb{H}^n}\Theta(\eta)\,\varphi_3^{-\frac{\ell}{\sigma-1}}(t)\,\left|\partial_t\varphi_3^\ell(t)\right|^{\frac{\sigma}{\sigma-1}}\,d\eta\,dt.
\end{eqnarray*}
As $\partial_t\varphi_3^\ell(t)=\ell\varphi_3^{\ell-1}(t)\partial_t\varphi_3(t)$, we obtain
\begin{eqnarray*}
\int_{\mathbb{H}^n}v_0(\eta)\Theta(\eta)\,d\eta&\leq& C\,\int_{\mathbb{H}^n}\Theta(\eta)\,d\eta\int_0^T\varphi_3^{\ell-\frac{\sigma}{\sigma-1}}(t)\,\left|\partial_t\varphi_3(t)\right|^{\frac{\sigma}{\sigma-1}}\,dt\\
&\leq& C\,\int_0^T\Phi^{\ell-\frac{\sigma}{\sigma-1}}\left(\frac{t}{T}\right)\,\left|\partial_t\Phi\left(\frac{t}{T}\right)\right|^{\frac{\sigma}{\sigma-1}}\,dt.
\end{eqnarray*}
By taking $\widetilde{t}=\frac{t}{T}$, we conclude that
\begin{equation}\label{}
\int_{\mathbb{H}^n}v_0(\eta)\Theta(\eta)\,d\eta\leq C\,T^{-\frac{\sigma}{\sigma-1}+1}\int_0^1\Phi^{\ell-\frac{\sigma}{\sigma-1}}(\tilde{t})\,\left|\Phi^{\prime}(\tilde{t})\right|^{\frac{\sigma}{\sigma-1}}\,d\widetilde{t}\leq C\,T^{-\frac{\sigma}{\sigma-1}+1}.
\end{equation}
By letting $T\rightarrow\infty$ we obtain a contradiction with $v_0(\eta)\geq0,\,v_0(\eta)\not\equiv 0$. This completes the proof.
\end{proof}
Next, we shall prove the nonexistence of positive classical and weak solutions in the case of
large data by an energy-type method as performed e.g. in \cite{Friedman,Wink1}.
\begin{theorem}\label{theo03}
Let  $n\geq1$, and $1<m<\sigma$. For each $0<w\in C(\mathbb{H}^n)\cap L^\infty(\mathbb{H}^n)$, there is $B>0$ such that if $v_0=Bw$ then there are no positive global classical solutions of \eqref{2}. More precisely, there exists a $T^{*}>0$ such that
$$\sup_{\eta\in\mathbb{H}^n} v(t,\eta)\longrightarrow\infty,\qquad\hbox{as}\,\,t\rightarrow T^{*}.$$
\end{theorem}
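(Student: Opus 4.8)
The plan is to localise the equation on a bounded subdomain of $\mathbb{H}^n$ and run a Kaplan-type eigenfunction argument: one tests \eqref{2} against the first Dirichlet eigenfunction of $-\Delta_{\mathbb{H}}$, uses a Hopf-type lemma to give the resulting boundary flux the favourable sign, and thereby reduces the PDE to an ordinary differential inequality for a weighted mass of $v$ that blows up in finite time as soon as $v_0$ is large enough.

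First I would fix a bounded domain $\Omega\subset\mathbb{H}^n$ with smooth boundary and let $\lambda_1>0$ be the first Dirichlet eigenvalue of $-\Delta_{\mathbb{H}}$ on $\Omega$, with eigenfunction $\phi_1$ satisfying $-\Delta_{\mathbb{H}}\phi_1=\lambda_1\phi_1$, $\phi_1>0$ in $\Omega$, $\phi_1=0$ on $\partial\Omega$, normalised by $\int_{\Omega}\phi_1\,d\eta=1$. Since the vector fields $X_i,Y_i$ are divergence free, Green's identity for $\Delta_{\mathbb{H}}$ combined with $\phi_1=0$ on $\partial\Omega$ gives, for every $h\in C^2(\overline\Omega)$,
$$\int_\Omega(\Delta_{\mathbb{H}}h)\,\phi_1\,d\eta=-\lambda_1\int_\Omega h\,\phi_1\,d\eta-\int_{\partial\Omega}h\,\partial_{\nu_{\mathbb{H}}}\phi_1\,dS,$$
where $\partial_{\nu_{\mathbb{H}}}$ denotes the horizontal conormal derivative. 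By the Hopf-type lemma on $\mathbb{H}^n$, the eigenfunction $\phi_1$ --- which attains its minimum value $0$ on $\partial\Omega$ --- satisfies $\partial_{\nu_{\mathbb{H}}}\phi_1\le 0$ on $\partial\Omega$; hence, whenever $h\ge 0$, the boundary integral above is nonpositive and
$$\int_\Omega(\Delta_{\mathbb{H}}h)\,\phi_1\,d\eta\ \ge\ -\lambda_1\int_\Omega h\,\phi_1\,d\eta.$$

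Next, suppose for contradiction that $v$ is a positive global classical solution of \eqref{2}; then $v$, and hence $v^m$, is $C^2$ in a neighbourhood of $\overline\Omega$. Setting $F(t):=\int_\Omega v(t,\eta)\,\phi_1(\eta)\,d\eta$, which is finite and $C^1$ on $[0,\infty)$, I would multiply the equation by $\phi_1$, integrate over $\Omega$, and apply the previous inequality with $h=v^m$ to get
$$F'(t)=\int_\Omega(\Delta_{\mathbb{H}}v^m)\phi_1\,d\eta+\int_\Omega v^\sigma\phi_1\,d\eta\ \ge\ -\lambda_1\int_\Omega v^m\phi_1\,d\eta+\int_\Omega v^\sigma\phi_1\,d\eta.$$
Since $1<m<\sigma$, Young's inequality yields $\lambda_1 v^m\le\tfrac12 v^\sigma+C$ with $C=C(\lambda_1,m,\sigma)$; integrating against the probability measure $\phi_1\,d\eta$ and using Jensen's inequality $\int_\Omega v^\sigma\phi_1\,d\eta\ge F^\sigma$ gives $F'(t)\ge\tfrac12 F(t)^\sigma-C$. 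Now I would choose $B>0$ so large that $F(0)=B\int_\Omega w\,\phi_1\,d\eta$ satisfies $\tfrac14 F(0)^\sigma\ge C$ --- possible because $w>0$ is continuous, so $\int_\Omega w\,\phi_1\,d\eta>0$. Then $F'(0)>0$, $F$ is nondecreasing, $F(t)\ge F(0)$ for all $t$, and therefore $F'(t)\ge\tfrac14 F(t)^\sigma$ on the whole interval of existence. Comparison with the ODE $y'=\tfrac14 y^\sigma$ (here $\sigma>1$) shows that $F$ cannot remain finite past $T^{*}:=\frac{4}{(\sigma-1)F(0)^{\sigma-1}}<\infty$; indeed $F(t)\to\infty$ as $t\uparrow T^{*}$. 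Since $F(t)\le\sup_{\eta\in\mathbb{H}^n}v(t,\eta)$ (as $\int_\Omega\phi_1\,d\eta=1$), we obtain $\sup_{\eta\in\mathbb{H}^n}v(t,\eta)\to\infty$ as $t\to T^{*}$, contradicting the assumed global existence and proving the claim.

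The step I expect to be the main obstacle is the boundary analysis underpinning the first two displays: one needs the existence and strict positivity in $\Omega$ of the first Dirichlet eigenfunction of the sub-Laplacian, enough boundary regularity of $\phi_1$ for Green's identity to hold, and --- most delicately --- the correct sign of its outward horizontal normal derivative on $\partial\Omega$, where the characteristic points of $\partial\Omega$ make the naive Euclidean argument insufficient. This is precisely the role of the Hopf-type lemma on the Heisenberg group; once the boundary term is controlled, the remainder is the classical Kaplan mechanism together with an elementary ODE comparison.
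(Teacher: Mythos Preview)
Your argument is correct and follows the same overall Kaplan eigenfunction strategy as the paper: localise to a Heisenberg ball, test against the principal Dirichlet eigenfunction $\Lambda$ of $-\Delta_{\mathbb{H}}$, invoke the Hopf-type lemma of Birindelli--Cutr\`{i} to give the boundary flux the favourable sign, and derive a blow-up ODE for the weighted mass $y(t)=\int_\Omega v\,\Lambda$. The only substantive difference lies in how you pass from the integral inequality
\[
y'(t)\ \ge\ -\lambda_1\int_\Omega v^m\Lambda\,d\eta+\int_\Omega v^\sigma\Lambda\,d\eta
\]
to a closed scalar ODE. You use the pointwise Young inequality $\lambda_1 v^m\le \tfrac12 v^\sigma+C$ together with Jensen, obtaining $F'\ge \tfrac14 F^\sigma$ once the data are large enough. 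The paper instead applies H\"older's inequality with \emph{negative} exponents to get $\int_\Omega v^m\Lambda\ge y^m$ and $\int_\Omega v^\sigma\Lambda\ge y^{\sigma-m}\int_\Omega v^m\Lambda$, so that for $y(0)>(2\lambda_1)^{1/(\sigma-m)}$ one has $\int_\Omega v^\sigma\Lambda\ge 2\lambda_1\int_\Omega v^m\Lambda$ and hence $y'\ge \lambda_1 y^m$. Your route is the more elementary one and yields a blow-up time governed by the exponent $\sigma$, whereas the paper's gives one governed by $m$; for the purpose of the theorem both are equally valid.
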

\begin{proof} Suppose, on the contrary, that $v$ is a positive global classical solution of \eqref{2}, i.e. a positive classical solution of \eqref{2} on $[0,T]$ for all $T>0$.

Let $\Omega\subset \mathbb{H}^n$ be a Heisenberg ball with boundary $\partial\Omega$, and let $\lambda_1>0$ be the principal eigenvalue of $-\Delta_{ \mathbb{H}}$ with Dirichlet condition and $\Lambda>0$ its corresponding eigenfunction such that $\int_{\Omega}\Lambda(\eta)\,d\eta=1$ (The existence of such eigenvalue has been proved in \cite{Chen1}). In order to get a contradiction, we are going to apply the energy method. We divide our proof into three steps.\\
\noindent{\bf Step 1.} Let
$$y(t):=\int_{\Omega}v(t,\eta)\Lambda(\eta)\,d\eta,\quad t\in[0,T].$$  \\
As $v$ is a classical solution, we have
$$y\in C([0,T])\cap C^1((0,T]).$$
Using the Green's formula for Heisenberg group (see \cite{Gav, Ruzhansky2}) one can get
\begin{eqnarray*}
y^{\prime}(t)&=&\int_{\Omega}v_t(t,\eta)\Lambda(\eta)\,d\eta\\
&=&\int_{\Omega}\Delta_\mathbb{H} v^m(t,\eta)\Lambda(\eta)\,d\eta+\int_{\Omega}v^{\sigma}(t,\eta)\Lambda(\eta)\,d\eta\\
&=&-\lambda_1\int_{\Omega} v^m(t,\eta)\Lambda(\eta)\,d\eta-\int_{\partial\Omega} v^m(t,\sigma)\partial_{\nu}\Lambda(\sigma)\,d\sigma+\int_{\Omega}v^{\sigma}(t,\eta)\Lambda(\eta)\,d\eta.
\end{eqnarray*}
It follows from the Hopf type lemma on the Heisenberg group $\mathbb{H}^n$ (see \cite[Lemma 2.1]{BCutri}), that $\partial_{\nu}\Lambda\leq 0$ on $\partial\Omega$. Then we have
\begin{equation}\label{26}
y^{\prime}(t)\geq-\lambda_1\int_{\Omega} v^m(t,\eta)\Lambda(\eta)\,d\eta+\int_{\Omega}v^{\sigma}(t,\eta)\Lambda(\eta)\,d\eta.
\end{equation}
In order to apply the energy method, i.e. obtaining a differential inequality for $y(t)$, we need to estimate the right-hand side of \eqref{26}. Let $v_0=Bw$, where $0<w\in C(\mathbb{H}^n)\cap L^\infty(\mathbb{H}^n)$ and $B\gg1$ is a positive real number such that
$$B>(2\lambda_1)^{\frac{1}{\sigma-m}}\left(\int_{\Omega}w(\eta)\Lambda(\eta)\,d\eta\right)^{-1}.$$
This implies that $y_0:=y(0)>c_3$, with
$$c_3:=\left(2\lambda_1\right)^{\frac{1}{\sigma-m}}.$$
\noindent{\bf Step 2.}  We have $y(t)\geq c_3$, for all $t\in(0,T]$. Indeed, let $T_0=\inf\{0<t\leq T;\,y(t)\geq c_3\}\leq T$. Since $y$ is continuous and $y(0)>c_3$, we have $T_0>0$. We claim $T_0=T$. Otherwise, we have $y(t)>c_3$ for all $t\in(0,T_0)$ such that $y(T_0)=c_3$, i.e. particularly, $y(t)\geq c_3$ for all $t\in[0,T_0]$. On the other hand, using $m>1$ and applying the following H\"{o}lder's inequality for negative exponent (see \cite[p. 27]{Adams})
$$\int|fg|\,d\mu\geq\left(\int |f|^{r_1}\,d\mu\right)^{\frac{1}{r_1}}\left(\int |g|^{r_2}\,d\mu\right)^{\frac{1}{r_2}},\,\,\hbox{for all}\,\,r_1<0, 0<r_2<1, \,\frac{1}{r_1}+\frac{1}{r_2}=1,$$
 with $r_1=\frac{1}{1-m}$ and $r_2=\frac{1}{m}$, we have
\begin{eqnarray}\label{27}
\int_{\Omega}v^m(t,\eta)\Lambda(\eta)\,d\eta&=&\int_{\Omega}v^m(t,\eta)\Lambda^m(\eta)\Lambda^{1-m}(\eta)\,d\eta\nonumber\\
&\geq&\left(\int_{\Omega}v(t,\eta)\Lambda(\eta)\,d\eta\right)^{m}\left(\int_{\Omega}\Lambda(\eta)\,d\eta\right)^{1-m}\nonumber\\
&=&\left(\int_{\Omega}v(t,\eta)\Lambda(\eta)\,d\eta\right)^{m}\nonumber\\
&=&y^m(t),
\end{eqnarray}
 for all $t\in[0,T]$, where we have used that $\displaystyle\int_{\Omega}\Lambda(\eta)\,d\eta=1$. In addition, using again H\"{o}lder's inequality for negative exponent with $r_1=\frac{m}{m-\sigma}<0$ and $r_2=\frac{m}{\sigma}<1$, we have
\begin{eqnarray*}
\int_{\Omega}v^\sigma(t,\eta)\Lambda(\eta)\,d\eta&=&\int_{\Omega}v^\sigma(t,\eta)\Lambda^{\frac{\sigma}{m}}(\eta)\Lambda^{1-\frac{\sigma}{m}}(\eta)\,d\eta\\
&\geq& \left(\int_{\Omega}v^{m}(t,\eta)\Lambda(\eta)\,d\eta\right)^{\frac{\sigma}{m}}\left(\int_{\Omega}\Lambda(\eta)\,d\eta\right)^{\frac{m-\sigma}{m}}\\
&=& \left(\int_{\Omega}v^{m}(t,\eta)\Lambda(\eta)\,d\eta\right)^{\frac{\sigma}{m}}
\end{eqnarray*}
which implies, using \eqref{27} and $y(t)\geq c_3$, that
\begin{eqnarray}\label{28}
\int_{\Omega}v^\sigma(t,\eta)\Lambda(\eta)\,d\eta&=& \left(\int_{\Omega}v^{m}(t,\eta)\Lambda(\eta)\,d\eta\right)^{\frac{\sigma}{m}-1} \left(\int_{\Omega}v^{m}(t,\eta)\Lambda(\eta)\,d\eta\right)\nonumber\\
&\geq&(y^m(t))^{\frac{\sigma}{m}-1}\left(\int_{\Omega}v^{m}(t,\eta)\Lambda(\eta)\,d\eta\right)\nonumber\\
&=&y^{\sigma-m}(t)\int_{\Omega}v^m(t,\eta)\Lambda(\eta)\,d\eta\nonumber\\
&\geq&c_3^{\sigma-m}\int_{\Omega}v^m(t,\eta)\Lambda(\eta)\,d\eta\nonumber\\
&=&2\lambda_1\int_{\Omega}v^m(t,\eta)\Lambda(\eta)\,d\eta,
\end{eqnarray}
 for all $t\in(0,T_0]$. Therefore, by \eqref{26} and \eqref{28}, we arrive at
\begin{align*}
y^{\prime}(t)&\geq-\lambda_1\int_{\Omega}v^m(t,\eta)\Lambda(\eta)\,d\eta+2\lambda_1\int_{\Omega}v^m(t,\eta)\Lambda(\eta)\,d\eta\\& =\lambda_1\int_{\Omega}v^m(t,\eta)\Lambda(\eta)\,d\eta,\end{align*}
which implies, using \eqref{27}, that
$$y^{\prime}(t)\geq \lambda_1y^m(t)\geq0,\qquad\hbox{ for all}\,\,t\in(0,T_0],
$$
and hence
$$c_3=y(T_0)\geq y(0)=y_0>c_3;$$
contradiction.\\
\noindent{\bf Step 3.} From Step 2, we have $y(t)\geq c_3$, for all $t\in[0,T]$. This implies, using \eqref{27}-\eqref{28}, that
$$
y^{\prime}(t)\geq \lambda_1y^m(t),\qquad\hbox{ for all}\,\,t\in(0,T],
$$
so
$$y(t)\geq \left(y_0^{1-m}-(m-1)\lambda_1\,t\right)^{-\frac{1}{m-1}},\qquad\hbox{ for all}\,\,t\in[0,T].$$
Let
$$T^{*}=\frac{1}{y_0^{m-1}(m-1)\lambda_1}.$$
If $T^{*}<T$, we also get a contradiction because
$$\sup_{\eta\in\mathbb{H}^n} v(t,\eta)\geq y(t)\geq \left(y_0^{1-m}-(m-1)\lambda_1\,t\right)^{-\frac{1}{m-1}}\longrightarrow\infty,\qquad\hbox{when}\,\,t\rightarrow T^{*}.$$
If $T\leq T^{*}$, we get a contradiction by choosing from the beginning $T$ big enough, namely $T>T^{*}$.\\
 This completes the proof.
\end{proof}

\begin{theorem}\label{theo3}
Let  $n\geq1$, and $m>0$, $\sigma>1$.\\
 If $m<\sigma$, then for each $0<w\in  L^1(\mathbb{H}^n)\cap BC(\mathbb{H}^n)$, there is $B>0$ such that if $v_0=Bw$ there are no positive global weak solutions $v\in C([0,\infty);L^1(\mathbb{H}^n))\cap L^\infty_{loc}((0,\infty);L^\infty(\mathbb{H}^n))$ of \eqref{2}.\\
If $m=\sigma$, then for each $0< v_0 \in  L^1(\mathbb{H}^n)\cap BC(\mathbb{H}^n)$ there are no positive global weak solutions $v\in C([0,\infty);L^1(\mathbb{H}^n))\cap L^\infty_{loc}((0,\infty);L^\infty(\mathbb{H}^n))$ of \eqref{2}.\\
More precisely, there exists a $T^{*}>0$ such that
$$\sup_{\eta\in\mathbb{H}^n} v(t,\eta)\longrightarrow\infty,\qquad\hbox{as}\,\,t\rightarrow T^{*}.$$
\end{theorem}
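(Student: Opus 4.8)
The plan is to carry over the Kaplan (eigenfunction) energy method of Theorem~\ref{theo03}, replacing the classical differentiation of $t\mapsto\int_\Omega v\Lambda\,d\eta$ by an integral inequality derived from the weak formulation; the degeneracy gets absorbed into the good sign of a single-layer term supplied by the Hopf lemma on $\mathbb{H}^n$.

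\emph{Set-up.} Suppose, for contradiction, that $v$ is a positive global weak solution of \eqref{2} in the class $C([0,\infty);L^1(\mathbb{H}^n))\cap L^\infty_{loc}((0,\infty);L^\infty(\mathbb{H}^n))$. Fix a Heisenberg ball $\Omega\subset\mathbb{H}^n$; when $m=\sigma$ take its radius so large that the principal Dirichlet eigenvalue $\lambda_1=\lambda_1(\Omega)$ of $-\Delta_{\mathbb{H}}$ satisfies $\lambda_1<1$, which is possible because $\lambda_1(B_R)=R^{-2}\lambda_1(B_1)\to0$ by the scaling of $\Delta_{\mathbb{H}}$ under Heisenberg dilations (for $m<\sigma$ any fixed $\Omega$ will do). Let $\Lambda>0$ be the corresponding eigenfunction, normalised by $\int_\Omega\Lambda\,d\eta=1$ (existence from \cite{Chen1}), extended by $0$ to $\mathbb{H}^n$. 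By the Hopf-type lemma \cite[Lemma 2.1]{BCutri} together with the strong maximum principle, $\partial_\nu\Lambda\le0$ on $\partial\Omega$ and $\nabla_{\mathbb{H}}\Lambda\neq0$ near $\partial\Omega$, so the superlevel sets $\Omega_\delta:=\{\eta\in\Omega:\Lambda(\eta)>\delta\}\Subset\Omega$ are smoothly bounded for all small $\delta>0$. Put $y(t):=\int_\Omega v(t,\eta)\Lambda(\eta)\,d\eta$; then $y\in C([0,\infty))$, $y(0)=\int_\Omega v_0\Lambda\,d\eta>0$ since $v_0>0$, and $y(t)\le\sup_{\eta\in\mathbb{H}^n}v(t,\eta)$.

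\emph{The key estimate.} The core step is to prove, for $0<t_0\le\tau$,
\begin{equation}\label{plan-key}
y(\tau)-y(t_0)\ \geq\ \int_{t_0}^{\tau}\Big(\int_\Omega v^\sigma(t,\eta)\Lambda(\eta)\,d\eta-\lambda_1\int_\Omega v^m(t,\eta)\Lambda(\eta)\,d\eta\Big)\,dt,
\end{equation}
the weak counterpart of \eqref{26}. Since the zero-extension of $\Lambda$ is not of class $C^{1,2}$, one subtracts the weak identity at time $t_0$ from the one at time $\tau$ and tests against $\Lambda_{\delta,\varepsilon}:=(\Lambda-\delta)_+\ast\omega_\varepsilon\in C_c^\infty(\Omega)$ (cut off in time so as to be compactly supported; $\omega_\varepsilon\ge0$ a mollifier, $\varepsilon$ small). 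On $\Omega_\delta$ one has $\Delta_{\mathbb{H}}(\Lambda-\delta)_+=\Delta_{\mathbb{H}}\Lambda=-\lambda_1\Lambda$, while across $\partial\Omega_\delta$ the gradient of the Lipschitz function $(\Lambda-\delta)_+$ drops to $0$ from the side along which $\Lambda$ decreases, contributing a \emph{nonnegative} single-layer measure; hence, as distributions paired with nonnegative functions,
$$\Delta_{\mathbb{H}}(\Lambda-\delta)_+\ \geq\ -\lambda_1(\Lambda-\delta)_+-\lambda_1\delta\,\mathbf 1_{\Omega_\delta},$$
and convolving with $\omega_\varepsilon\ge0$ gives the pointwise bound $\Delta_{\mathbb{H}}\Lambda_{\delta,\varepsilon}\ge-\lambda_1\Lambda_{\delta,\varepsilon}-\lambda_1\delta\,(\mathbf 1_{\Omega_\delta}\ast\omega_\varepsilon)$. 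Inserting this into the weak identity, using $v^m\ge0$, and letting $\varepsilon\to0$ and then $\delta\to0$ — the error $\lambda_1\delta\int_{t_0}^{\tau}\!\int_\Omega v^m$ vanishes since $v\in L^m_{loc}$, and $(\Lambda-\delta)_+\nearrow\Lambda$ on $\Omega$ — yields \eqref{plan-key} by dominated convergence, using $v\in C([0,\infty);L^1)$ and the local integrability of $v^\sigma,v^m$ on $[t_0,\tau]\times\overline\Omega\Subset(0,\infty)\times\mathbb{H}^n$. \textbf{I expect this passage to be the main obstacle:} justifying the distributional inequality for $\Delta_{\mathbb{H}}(\Lambda-\delta)_+$ and the two limits. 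It is precisely here that the Hopf lemma is used — it makes the level sets near $\partial\Omega$ regular, so the single-layer term is well-defined and of the correct sign (this term being the weak trace of the boundary integral dropped in \eqref{26}). Should the approximation prove awkward, an alternative is to invoke interior parabolic regularity for \eqref{2} — applicable because comparison with $\dot z=z^\sigma$ keeps $v$ bounded below away from $0$ on $\overline\Omega\times[t_0,T]$, making the equation uniformly parabolic there — so that $v$ is classical near $\overline\Omega$ and \eqref{plan-key} reduces to \eqref{26}.

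\emph{From \eqref{plan-key} to finite-time blow-up.} Write $S(t):=\int_\Omega v^\sigma(t,\eta)\Lambda(\eta)\,d\eta$. Jensen's inequality for the probability measure $\Lambda\,d\eta$ — convexity of $r\mapsto r^\sigma$ (as $\sigma>1$) and of $r\mapsto r^{\sigma/m}$ (as $\sigma\ge m$) — gives $S(t)\ge y(t)^\sigma$ and $\int_\Omega v^m(t,\eta)\Lambda(\eta)\,d\eta\le S(t)^{m/\sigma}$, so the integrand $G(t)$ in \eqref{plan-key} satisfies $G(t)\ge S(t)^{m/\sigma}\big(S(t)^{(\sigma-m)/\sigma}-\lambda_1\big)$. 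If $m<\sigma$ this yields $G(t)\ge\tfrac12 y(t)^\sigma$ whenever $y(t)\ge K:=(2\lambda_1)^{1/(\sigma-m)}$; if $m=\sigma$ it reads $G(t)=(1-\lambda_1)S(t)\ge(1-\lambda_1)y(t)^\sigma$ with $1-\lambda_1>0$, for every $t$. In the case $m<\sigma$ choose $B$ so large that $y(t_0)>K$ for all small $t_0$ (possible since $y(t_0)\to\int_\Omega(Bw)\Lambda\,d\eta>0$); a continuation argument based on \eqref{plan-key} then shows $y>K$ throughout $[t_0,\infty)$: on a maximal subinterval where $y>K$, $G\ge\tfrac12 K^\sigma>0$, so by \eqref{plan-key} $y$ cannot fall back to $K$. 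In the case $m=\sigma$ no largeness of $v_0$ is needed, only $y(t_0)>0$, which holds trivially and is preserved since $G\ge0$. Setting $z(\tau):=y(t_0)+\int_{t_0}^{\tau}G$, \eqref{plan-key} gives $y\ge z$ on $[t_0,\infty)$; $z$ is absolutely continuous and nondecreasing with $z(t_0)=y(t_0)>0$, and $z'(\tau)=G(\tau)\ge c\,y(\tau)^\sigma\ge c\,z(\tau)^\sigma$ for a.e.\ $\tau$, with $c=\tfrac12$ (resp.\ $c=1-\lambda_1$). Integrating the resulting inequality for $z^{1-\sigma}$ gives $z(\tau)^{1-\sigma}\le z(t_0)^{1-\sigma}-c(\sigma-1)(\tau-t_0)$, hence $z(\tau)\to\infty$ as $\tau\uparrow T^*:=t_0+z(t_0)^{1-\sigma}/\big(c(\sigma-1)\big)<\infty$; but $z(T^*)\le y(t_0)+\|\Lambda\|_{L^\infty}\int_{t_0}^{T^*}\!\int_\Omega v^\sigma<\infty$ since $[t_0,T^*]\times\overline\Omega$ is compact in $(0,\infty)\times\mathbb{H}^n$ and $v\in L^\sigma_{loc}$ — a contradiction. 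Therefore no global weak solution in the stated class exists, and letting $t_0\to0$ the bound $\sup_{\eta}v(t,\eta)\ge z(t)$ shows $\sup_{\eta\in\mathbb{H}^n}v(t,\eta)\to\infty$ as $t$ approaches a finite $T^*\le y(0)^{1-\sigma}/\big(c(\sigma-1)\big)$.
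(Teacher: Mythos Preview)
Your approach is sound but genuinely different from the paper's. The paper does \emph{not} use the Dirichlet eigenfunction $\Lambda$ on a ball for Theorem~\ref{theo3}; that device is reserved for the classical case (Theorem~\ref{theo03}). For weak solutions the paper sidesteps the boundary difficulties entirely by testing against the global smooth weight $\Theta_1=c_*\Theta$ of Lemma~\ref{lemma4}, which satisfies $\Delta_{\mathbb{H}}\Theta_1\ge-\lambda\Theta_1$ on all of $\mathbb{H}^n$ (with $\lambda=2(Q+2)$, or $\lambda=\tfrac12$ after choosing $\varepsilon$ small for the case $m=\sigma$, which plays the role of your ``$\lambda_1<1$''). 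Lemma~\ref{lemma3} legitimizes $\Theta_1$ as a test function for weak solutions in $L^\infty_{loc}((0,T);L^\infty)$, and one obtains directly $J(\tau)-J(0)\ge\int_0^\tau\!\int_{\mathbb{H}^n}(v^\sigma-\lambda v^m)\Theta_1$ with $J(t)=\int_{\mathbb{H}^n}v\Theta_1$, no boundary term and no approximation; the Jensen/blow-up argument then proceeds essentially as in your last paragraph. Your route instead localizes and must handle the non-smooth zero-extension of $\Lambda$: the truncation scheme with $(\Lambda-\delta)_+$ is workable in principle, but the distributional inequality $\Delta_{\mathbb{H}}(\Lambda-\delta)_+\ge-\lambda_1\Lambda\,\mathbf 1_{\Omega_\delta}$ is more delicate in the sub-Riemannian setting than in the Euclidean one (characteristic points of the level sets $\partial\Omega_\delta$), and your fallback via interior regularity for the degenerate equation is also non-trivial. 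The paper's choice trades all of this for the one-time construction of $\Theta$ plus the density lemma, which is considerably simpler here; your local-eigenfunction argument, on the other hand, would transfer to situations where no such explicit global weight is available.
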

\begin{proof} Suppose, on the contrary, that $v\in C([0,\infty);L^1(\mathbb{H}^n))\cap L^\infty_{loc}((0,\infty);L^\infty(\mathbb{H}^n))$ is a positive weak solution of \eqref{2} on $[0,\infty)\times\mathbb{H}^n$.\\
\noindent\underline{{\bf The case $m<\sigma$:}} Let $\Theta_1(\eta):=c_{*}\Theta(\eta)$, $\eta\in\mathbb{H}^n$, where $\Theta$ is defined in Lemma \ref{lemma4} with $\varepsilon=1$, and $c_{*}>0$ is a constant such that $\int_{\mathbb{H}^n}\Theta_1(x)\,d\eta=1$, namely $c_{*}=\left(\int_{\mathbb{H}^n}\Theta(\eta)\,d\eta\right)^{-1}$. Then
\begin{equation}\label{29}
\Delta_{\mathbb{H}}\Theta_1(\eta)\geq-\lambda\Theta_1(\eta),\qquad\hbox{for all}\,\,\eta\in\mathbb{H}^n,
\end{equation}
where $\lambda=2(2+Q)$. In order to get a contradiction, we are going to apply the energy method. We divide our proof into three steps.\\
\noindent{\bf Step 1.} Let
$$J(t):=\int_{\mathbb{H}^n}v(t,\eta)\Theta_1(\eta)\,d\eta,\quad t\geq0.$$
As $v$ is a weak solution, by Lemma \ref{lemma3}  we may choose $\psi(t,\eta)=\Theta_1(\eta)$ as a test function. Therefore, using the continuity (in time) of $v$ and \eqref{29}, we have $J\in C([0,\infty))$ and
\begin{eqnarray*}
J(\tau)-J(0)&=&\int_{\mathbb{H}^n}v(t,\eta)\Theta_1(\eta)\,d\eta-\int_{\mathbb{H}^n}v_0(\eta)\Theta_1(\eta)\,d\eta
\\&=&\int_0^\tau\int_{\mathbb{H}^n}v^\sigma\Theta_1(\eta)\,d\eta\,dt+\int_0^\tau\int_{\mathbb{H}^n}v^{m}\,\Delta_{\mathbb{H}}\Theta_1(\eta)\,d\eta\,dt\\
&\geq&\int_0^\tau\int_{\mathbb{H}^n}(v^\sigma-\lambda v^{m})\Theta_1(\eta)\,d\eta\,dt\\
&=&\int_0^\tau\int_{\mathbb{H}^n}F(v)\Theta_1(\eta)\,d\eta\,dt,
\end{eqnarray*}
for all $\tau\in[0,\infty)$, where $F(z):=z^\sigma-\lambda z^{m}$, $z>0$.\\
\noindent{\bf Step 2.}  Let $v_0=Bw$, where $0<w\in  L^1(\mathbb{H}^n)\cap BC(\mathbb{H}^n)$ and $B\gg1$ is a positive real number such that
$$B>\lambda^{\frac{1}{\sigma-m}}\left(\int_{\Omega}w(\eta)\Theta_1(\eta)\,d\eta\right)^{-1}.$$
This is equivalent to $J(0)>\lambda^{\frac{1}{\sigma-m}}$. Therefore, by the continuity of $J$, there exists $0<t_0\ll1$ sufficiently small such that $J(t)>\lambda^{\frac{1}{\sigma-m}}$ for all $0\leq t< t_0$. We claim that $J(t)>\lambda^{\frac{1}{\sigma-m}}$, for all $t\geq0$. Indeed, assume on the contrary that $J(t)\leq\lambda^{\frac{1}{\sigma-m}}$, for some $t\geq t_0$. Let $\tau_0$ be the smallest such value, this implies that
$$J(\tau)>\lambda^{\frac{1}{\sigma-m}},\quad\hbox{for all}\,\,0\leq\tau<\tau_0,\qquad\hbox{and}\quad J(\tau_0)=\lambda^{\frac{1}{\sigma-m}},$$
particularly $J(\tau)\geq\lambda^{\frac{1}{\sigma-m}}$ for all $0\leq\tau\leq\tau_0$. On the other hand, we can easily see that $F$ is convex on $(0,\infty)$ if $m\leq 1$ and on $((\frac{\lambda m(m-1)}{\sigma(\sigma-1)})^{\frac{1}{\sigma-m}},\infty)$ if $m> 1$. Therefore by using
$$J(\tau)\geq\lambda^{\frac{1}{\sigma-m}}>\max\left\{\left(\frac{\lambda m(m-1)}{\sigma(\sigma-1)}\right)^{\frac{1}{\sigma-m}};0\right\},\quad\hbox{for all}\,\,0\leq\tau\leq\tau_0,$$
Jensen's inequality and the fact that $\int_{\mathbb{H}}\Theta_1(x)\,d\eta=1$, we get
\begin{equation}\label{30}
J(\tau)\geq J(0)+\int_0^\tau F(J(t))\,dt=:G(\tau),\quad\hbox{for all}\,\,0\leq\tau\leq\tau_0.
\end{equation}
Moreover, as $F$ is positive on $(\lambda^{\frac{1}{\sigma-m}},\infty)$, we have $\int_0^{\tau_0} F(J(t))\,dt>0$, which implies
$$\lambda^{\frac{1}{\sigma-m}}=J(\tau_0)\geq J(0)+\int_0^\tau F(J(t))\,dt>J(0)>\lambda^{\frac{1}{\sigma-m}};$$
contradiction.\\
\noindent{\bf Step 3.} From Step 2, we have $$J(t)>\lambda^{\frac{1}{\sigma-m}}>(\frac{\lambda m}{\sigma})^{\frac{1}{\sigma-m}},\qquad\hbox{for all $t\geq0$}.$$
 This implies, as $F$ is increasing on $((\frac{\lambda m}{\sigma})^{\frac{1}{\sigma-m}},\infty)$ and using \eqref{30}, that $$F(J(\tau))\geq F(G(\tau))>0$$ and $$F(G(\tau))\geq F(J(0))>0,$$ i.e.
\begin{align*}G'(\tau)&=F(J(\tau))\geq F(G(\tau))\\&=G^\sigma(\tau)-\lambda G^m(\tau)\\&=G^\sigma(\tau)(1-\lambda G^{m-\sigma}(\tau)).\end{align*}
In addition, as $G(\tau)\geq J(0)$, it follows that $$1-\lambda G^{m-\sigma}(\tau)>1-\lambda J^{m-\sigma}(0)>0,$$ and so
$$\frac{G'(\tau)}{G^\sigma(\tau)}\geq 1-\lambda J^{m-\sigma}(0),\quad\hbox{for all}\,\,\tau\geq0.$$
Integrating both sides over $(0,t)$, we arrive at
$$G(t)\geq\frac{1}{\left(J^{1-\sigma}(0)-(\sigma-1)(1-\lambda J^{m-\sigma}(0))t\right)^{\sigma-1}}.$$
Let
$$T^{*}=\frac{J^{1-\sigma}(0)}{(\sigma-1)(1-\lambda J^{m-\sigma}(0))},$$ then
\begin{align*}\sup_{\eta\in\mathbb{H}^n} v(t,\eta)&\geq J(t)\geq G(t)\\&\geq\frac{1}{\left(J^{1-\sigma}(0)-(\sigma-1)(1-\lambda J^{m-\sigma}(0))t\right)^{\sigma-1}}\longrightarrow\infty,\quad\hbox{when}\,\,t\rightarrow T^{*}.\end{align*}
This completes the proof.\\
\noindent\underline{{\bf The case $m=\sigma$:}} Let $\Theta_1(\eta):=c_{*}\Theta(\eta)$, $\eta\in\mathbb{H}^n$, where $\Theta$ is defined in Lemma \ref{lemma4} with $\varepsilon=\frac{1}{4(2+Q)}$, and $c_{*}>0$ is a constant such that $\int_{\mathbb{H}^n}\Theta_1(x)\,d\eta=1$, namely $c_{*}=\left(\int_{\mathbb{H}^n}e^{-|\eta|^2_{_{\mathbb{H}}}}\,d\eta\right)^{-1}$. Then
\begin{equation}\label{32}
\Delta_{\mathbb{H}}\Theta_1(\eta)\geq-\frac{1}{2}\Theta_1(\eta),\qquad\hbox{for all}\,\,\eta\in\mathbb{H}^n.
\end{equation}
In order to get a contradiction, we are going to apply the energy method. We divide our proof into two steps.\\
\noindent{\bf Step 1.} Let
$$J(t):=\int_{\mathbb{H}^n}v(t,\eta)\Theta_1(\eta)\,d\eta,\quad t\geq0.$$
As $v$ is a weak solution, by Lemma \ref{lemma3}  we may choose $\psi(t,\eta)=\Theta_1(\eta)$ as a test function. Therefore, using the continuity (in time) of $v$ and \eqref{32}, we have $J\in C([0,\infty))$ and
\begin{eqnarray}\label{33}
J(\tau)-J(0)&=&\int_0^\tau\int_{\mathbb{H}^n}v^\sigma\Theta_1(\eta)\,d\eta\,dt+\int_0^\tau\int_{\mathbb{H}^n}v^{\sigma}\,\Delta_{\mathbb{H}}\Theta_1(\eta)\,d\eta\,dt\nonumber\\
&\geq&\frac{1}{2}\int_0^\tau\int_{\mathbb{H}^n}v^\sigma\Theta_1(\eta)\,d\eta\,dt,
\end{eqnarray}
for all $\tau\in[0,\infty)$. On the other hand, using $\sigma>1$ and applying the following H\"{o}lder's inequality for negative exponent \cite[p. 27]{Adams}
$$\int|fg|\,d\mu\geq\left(\int |f|^{r_1}\,d\mu\right)^{\frac{1}{r_1}}\left(\int |g|^{r_2}\,d\mu\right)^{\frac{1}{r_2}},\,\,\, \hbox{for all}\,\,r_1<0, 0<r_2<1, \,\frac{1}{r_1}+\frac{1}{r_2}=1,$$
 with $r_1=\frac{1}{1-\sigma}$ and $r_2=\frac{1}{\sigma}$, we have
\begin{eqnarray}\label{31}
\int_{\Omega}v^\sigma(t,\eta)\Theta_1(\eta)\,d\eta&=&\int_{\Omega}v^\sigma(t,\eta)\Theta_1^\sigma(\eta)\Theta_1^{1-\sigma}(\eta)\,d\eta\nonumber\\
&\geq&\left(\int_{\Omega}v(t,\eta)\Theta_1(\eta)\,d\eta\right)^{\sigma}\left(\int_{\Omega}\Theta_1(\eta)\,d\eta\right)^{1-\sigma}\nonumber\\
&=&\left(\int_{\Omega}v(t,\eta)\Theta_1(\eta)\,d\eta\right)^{\sigma}\nonumber\\
&=&J^\sigma(t),
\end{eqnarray}
 for all $t\geq0$, where we have used that $\displaystyle\int_{\Omega}\Theta_1(\eta)\,d\eta=1$. Inserting \eqref{31} into \eqref{33} we get
 \begin{equation}\label{34}
J(\tau)\geq J(0)+\frac{1}{2}\int_0^\tau J^\sigma(t)\,dt=:H(\tau).
\end{equation}
\noindent{\bf Step 2.}  Let $0<v_0\in  L^1(\mathbb{H}^n)\cap BC(\mathbb{H}^n)$, then $J(0)>0$. This implies, using \eqref{34}, that $J^\sigma(\tau)\geq H^\sigma(\tau)$ and $H^\sigma(\tau)\geq J^\sigma(0)>0$, so
$$H'(\tau)=\frac{1}{2}J^\sigma(\tau)\geq \frac{1}{2}H^\sigma(\tau),$$
i.e.
$$\frac{H'(\tau)}{H^\sigma(\tau)}\geq  \frac{1}{2},\quad\hbox{for all}\,\,\tau\geq0.$$
Integrating both sides over $(0,t)$, we arrive at
$$H(t)\geq\frac{1}{\left(J^{1-\sigma}(0)-(\sigma-1)\frac{t}{2}\right)^{\sigma-1}}.$$
Let
$$T^{*}=\frac{2J^{1-\sigma}(0)}{\sigma-1},$$ then we have
$$\sup_{\eta\in\mathbb{H}^n} v(t,\eta)\geq J(t)\geq H(t)\geq\frac{1}{\left(J^{1-\sigma}(0)-(\sigma-1)\frac{t}{2}\right)^{\sigma-1}}\longrightarrow\infty,\qquad\hbox{when}\,\,t\rightarrow T^{*}.$$
This completes the proof.
\end{proof}

\section{Degenerate parabolic equation}
In this section we consider the following degenerate parabolic equation
\begin{equation}\label{1}
\left\{
\begin{array}{ll}
\,\,\displaystyle{u_{t}=u^{q}\,\Delta_{\mathbb{H}}u+u^p,}&\displaystyle {t>0,\,\,\eta\in \mathbb{H}^n,}\\
\\
\displaystyle{u(0,\eta)= u_0(\eta)\geq0},&\displaystyle{\eta\in \mathbb{H}^n, }\\
\\
\displaystyle{u(t,\eta)\geq0},&\displaystyle {t>0,\,\,\eta\in \mathbb{H}^n,}
\end{array}
\right. \end{equation}
where $u_0\in L_{loc}^1(\mathbb{H}^n)$, $n\geq1$, $q\geq0$, $p>1$.\\

\subsection{Case of $0\leq q<1$}
We first consider the case $0\leq q<1$.
\begin{definition}\textup{(Weak solution of \eqref{1})}${}$\\
Let $u_0\in L^1_{loc}(\mathbb{H}^n)$ and $T>0$. We say that $u\geq0$ is a weak solution of \eqref{1} on $[0,T)\times\mathbb{H}^n$ if
$$u\in L_{loc}^p((0,T)\times\mathbb{H}^n)\cap L_{loc}^\infty((0,T);L_{loc}^1(\mathbb{H}^n)),\qquad u^{q}\,\Delta_{\mathbb{H}}u\in L_{loc}^1((0,T)\times\mathbb{H}^n),$$
and
\begin{eqnarray}\label{weaksolution1}
&{}&\int_{\mathbb{H}^n}u(\tau,\eta)\varphi(\tau,\eta)\,d\eta-\int_{\mathbb{H}^n}u(0,\eta)\varphi(0,\eta)\,d\eta\nonumber\\
&{}&\qquad\qquad=\int_0^\tau\int_{\mathbb{H}^n}u^p\varphi(t,\eta)\,d\eta\,dt+\int_0^\tau\int_{\mathbb{H}^n}u^{q}\,\Delta_{\mathbb{H}}u\,\varphi(t,\eta)\,d\eta\,dt\nonumber\\&{}&\qquad\qquad+\int_0^\tau\int_{\mathbb{H}^n}u\varphi_t(t,\eta)\,d\eta\,dt,
\end{eqnarray}
holds for all compactly supported $\varphi\in C^{1,0}_{t,x}([0,T)\times\mathbb{H}^n)$, and $0\leq\tau<T$. If $T=\infty$, we call $u$ a global  in time weak solution to \eqref{1}.
\end{definition}
We set
$$H^2(\mathbb{H}^n)=\{u\in L^2(\mathbb{H}^n);\,\,\nabla_{\mathbb{H}} u\in L^2(\mathbb{H}^n),\,\Delta_{\mathbb{H}} u\in L^2(\mathbb{H}^n)\}.$$
In order to get the nonexistence result of \eqref{1}, we need the following
\begin{lemma}\textup{(Weak solution of \eqref{1} $\Rightarrow$ Weak solution of \eqref{2})}\label{lemma1}${}$\\
Let $T>0$, $0\leq q<1$, $p>1$, and $0<u_0\in C(\mathbb{H}^n)\cap L^1_{loc}(\mathbb{H}^n)$. If $u>0$ is a positive weak solution of \eqref{1}  on $[0,T)\times\mathbb{H}^n$ such that $u\in C^{1,0}_{t,x}([0,T)\times\mathbb{H}^n)$ and $u(t,\cdotp)\in H^2(\mathbb{H}^n)$ for a.e. $t\in[0,T)$, then $v(t,\eta):=a u^{1-q}(t,\delta_b(\eta))$ is a positive weak solution of \eqref{2}  on $[0,T)\times\mathbb{H}^n,$ where
$$a=(1-q)^{\frac{1-q}{p-1}},\quad b=(1-q)^{\frac{p-1-q}{2(p-1)}},$$
$$\delta_b(\eta)=(bx,b y,b^2\tau),\quad\hbox{for all}\,\,\eta=(x,y,\tau)\in\mathbb{H}^n,$$
with
$$m=\frac{1}{1-q}\geq1,\quad\hbox{and}\quad \sigma=\frac{p-q}{1-q}>1.$$
\end{lemma}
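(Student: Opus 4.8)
The plan is to show that $v$ solves the pointwise equation $v_t=\Delta_{\mathbb{H}}v^m+v^\sigma$ and then to recover the weak formulation \eqref{weaksolution2} of \eqref{2} by integration by parts. Two structural facts drive the computation. First, the exponents satisfy $m(1-q)=1$, $\sigma(1-q)=p-q$, $\sigma-1=\tfrac{p-1}{1-q}$ and $m-1=\tfrac{q}{1-q}$. Second, the sub-Laplacian is homogeneous of degree two under the dilations $\delta_b$, i.e.\ $\Delta_{\mathbb{H}}(f\circ\delta_b)=b^2(\Delta_{\mathbb{H}}f)\circ\delta_b$; this follows at once from $X_i(f\circ\delta_b)=b\,(X_if)\circ\delta_b$ and $Y_i(f\circ\delta_b)=b\,(Y_if)\circ\delta_b$, which in turn come from $\delta_b(x,y,\tau)=(bx,by,b^2\tau)$ and the explicit form of $X_i,Y_i$.

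Next I would upgrade the weak solution $u$ of \eqref{1} to a strong one. Since $u\in C^{1,0}_{t,x}$, the map $\tau\mapsto\int_{\mathbb{H}^n}u(\tau,\eta)\varphi(\tau,\eta)\,d\eta$ is differentiable for every compactly supported $\varphi\in C^{1,0}_{t,x}$, and differentiating \eqref{weaksolution1} in $\tau$ gives $\int_{\mathbb{H}^n}u_t\varphi\,d\eta=\int_{\mathbb{H}^n}(u^p+u^{q}\Delta_{\mathbb{H}}u)\varphi\,d\eta$ for all such $\varphi$, hence $u_t=u^{q}\Delta_{\mathbb{H}}u+u^p$ a.e.\ on $(0,T)\times\mathbb{H}^n$. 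Using $u>0$ and $u(t,\cdot)\in H^2(\mathbb{H}^n)$ I can then differentiate $v=a\,u^{1-q}(\cdot,\delta_b(\cdot))$ by the chain rule: setting $w(t,\eta)=u(t,\delta_b\eta)$ one finds $v_t=a(1-q)w^{-q}w_t=a(1-q)\big[(\Delta_{\mathbb{H}}u)(t,\delta_b\eta)+w^{p-q}\big]$, while $v^m=a^m w$ yields $\Delta_{\mathbb{H}}(v^m)=a^m b^2(\Delta_{\mathbb{H}}u)(t,\delta_b\eta)$ and $v^\sigma=a^\sigma w^{p-q}$. The definitions $a=(1-q)^{1/(\sigma-1)}=(1-q)^{(1-q)/(p-1)}$ and $b^2=(1-q)a^{1-m}=(1-q)^{(p-1-q)/(p-1)}$ are precisely those for which $a^\sigma=a(1-q)=a^m b^2$, so the three terms match and $v_t=\Delta_{\mathbb{H}}v^m+v^\sigma$ a.e.

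To obtain \eqref{weaksolution2}, fix a compactly supported $\psi\in C^{1,2}_{t,x}([0,T)\times\mathbb{H}^n)$, multiply the pointwise identity for $v$ by $\psi$, integrate over $(0,\tau)\times\mathbb{H}^n$, integrate by parts once in $t$ in $\int\!\!\int v_t\psi$, and apply Green's identity on $\mathbb{H}^n$ in $\int\!\!\int\Delta_{\mathbb{H}}(v^m)\psi$ to transfer both Heisenberg derivatives onto $\psi$; there are no boundary terms because $v^m(t,\cdot)=a^m u(t,\delta_b(\cdot))\in H^2(\mathbb{H}^n)$ and $\psi$ is compactly supported. This reproduces \eqref{weaksolution2} with $v_0(\eta):=a\,u_0^{1-q}(\delta_b\eta)$. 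It remains to note that $v>0$ (clear, since $a>0$, $u>0$) and that $v\in L^\sigma_{loc}\cap L^m_{loc}\cap L^\infty_{loc}((0,T);L^1_{loc})$ and $v_0\in L^1_{loc}$, which follow from the corresponding properties of $u$ and $u_0$ together with the elementary inequalities $u^{1-q}\le 1+u$ and $u^{p-q}\le 1+u^p$, valid since $0\le q<1<p$, after the change of variables $\eta\mapsto\delta_b\eta$ (whose Jacobian is the constant $b^{Q}$).

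The step I expect to be the most delicate is the justification of the two integrations by parts under only the stated regularity: the time integration by parts needs $\tau\mapsto\int_{\mathbb{H}^n}v(\tau,\eta)\psi(\tau,\eta)\,d\eta$ to be absolutely continuous, which follows from $v_t\in L^1_{loc}$ (inherited from $u_t\in C^0$, $u>0$), while discarding the boundary contribution in the spatial Green's identity is exactly where $u(t,\cdot)\in H^2(\mathbb{H}^n)$ is used. The algebraic identities and the dilation homogeneity are routine; it is the matching of the weak formulations across the two different test-function classes ($C^{1,0}$ for \eqref{1} versus $C^{1,2}$ for \eqref{2}) that requires care.
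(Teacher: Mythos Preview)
Your argument is correct but follows a genuinely different route from the paper. The paper never upgrades to a pointwise equation: instead it inserts the specific test function $\varphi(t,\eta)=u^{-q}(t,\eta)\,\psi\bigl(t,\delta_{1/b}(\eta)\bigr)$ directly into the weak formulation \eqref{weaksolution1} (this is admissible since $u\in C^{1,0}_{t,x}$ and $u>0$), which immediately produces $\int u^{1-q}\psi$ on the left and $\int u^{p-q}\psi+\int(\Delta_{\mathbb{H}}u)\psi+\int u\,\partial_t(u^{-q}\psi)$ on the right; it then integrates by parts once in $t$ and once in $\eta$ (the latter is where $u(t,\cdot)\in H^2$ enters), changes variables via $\tilde\eta=\delta_{1/b}(\eta)$, and checks the same algebraic identities you found to obtain \eqref{weaksolution2}. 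Your approach---first deriving $u_t=u^q\Delta_{\mathbb{H}}u+u^p$ a.e.\ and then computing $v_t=\Delta_{\mathbb{H}}v^m+v^\sigma$ pointwise via the dilation homogeneity $\Delta_{\mathbb{H}}(f\circ\delta_b)=b^2(\Delta_{\mathbb{H}}f)\circ\delta_b$---makes the mechanism (and the role of the constants $a,b$) more transparent, at the cost of needing to justify the ``differentiate the weak identity in $\tau$'' step. The paper's approach avoids that step but hides the homogeneity of $\Delta_{\mathbb{H}}$ inside the change of variables. Both arguments use the $H^2$ hypothesis at exactly the same point (the spatial integration by parts) and yield the same conclusion.
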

\begin{proof}
Let $T>0$. Suppose that $u>0$ is a positive weak solution of \eqref{1}  on $[0,T)\times\mathbb{H}^n$ such that $u\in C^{1,0}_{t,x}([0,T)\times\mathbb{H}^n)$. Let $\psi\in C^{1,2}_{t,x}([0,T)\times\mathbb{H}^n)$ be a compactly supported test function. Let
$$\varphi(t,\eta)=u^{-q}(t,\eta)\psi(t,\delta_{\frac{1}{b}}(\eta)),$$
then $\varphi\in C^{1,0}_{t,x}([0,T)\times\mathbb{H}^n)$ and
\begin{eqnarray}\label{3}
&{}&\int_{\mathbb{H}^n}u^{1-q}(\tau,\eta)\psi(\tau,\delta_{\frac{1}{b}}(\eta))\,d\eta-\int_{\mathbb{H}^n}u^{1-q}(0,\eta)\psi(0,\delta_{\frac{1}{b}}(\eta))\,d\eta\nonumber\\
&{}&=\int_0^\tau\int_{\mathbb{H}^n}u^{p-q}(t,\eta)\psi(t,\delta_{\frac{1}{b}}(\eta))\,d\eta\,dt\nonumber\\
&{}&\quad+\int_0^\tau\int_{\mathbb{H}^n}\Delta_{\mathbb{H}}u(t,\eta)\,\psi(t,\delta_{\frac{1}{b}}(\eta))\,d\eta\,dt\nonumber\\
&{}&\quad+\int_0^\tau\int_{\mathbb{H}^n}u(t,\eta)\partial_t(u^{-q}(t,\eta)\psi(t,\delta_{\frac{1}{b}}(\eta)))\,d\eta\,dt,
\end{eqnarray}
for all $\tau\in[0,T)$. Using the integration by parts, we have
\begin{eqnarray}\label{4}
&{}&\int_0^{\tau}\int_{\mathbb{H}^n}u(t,\eta)\partial_t(u^{-q}(t,\eta)\psi(t,\delta_{\frac{1}{b}}(\eta)))\,d\eta\,dt\nonumber\\
&{}&=-q\int_0^{\tau}\int_{\mathbb{H}^n}u^{-q}(t,\eta)u_t(t,\eta)\psi(t,\delta_{\frac{1}{b}}(\eta))\,d\eta\,dt+\int_0^{\tau}\int_{\mathbb{H}^n}u^{1-q}(t,\eta)\psi_t(t,\delta_{\frac{1}{b}}(\eta))\,d\eta\,dt\nonumber\\
&{}&=-\frac{q}{1-q}\int_0^{\tau}\int_{\mathbb{H}^n}\partial_t(u^{1-q}(t,\eta))\psi(t,\delta_{\frac{1}{b}}(\eta))\,d\eta\,dt+\int_0^{\tau}\int_{\mathbb{H}^n}u^{1-q}(t,\eta)\psi_t(t,\delta_{\frac{1}{b}}(\eta))\,d\eta\,dt\nonumber\\
&{}&=\frac{q}{1-q}\int_0^{\tau}\int_{\mathbb{H}^n}u^{1-q}(t,\eta)\psi_t(t,\delta_{\frac{1}{b}}(\eta))\,d\eta\,dt+\int_0^{\tau}\int_{\mathbb{H}^n}u^{1-q}(t,\eta)\psi_t(t,\delta_{\frac{1}{b}}(\eta))\,d\eta\,dt\nonumber\\
&{}&\quad -\frac{q}{1-q}\int_{\mathbb{H}^n}u^{1-q}(\tau,\eta)\psi(\tau,\delta_{\frac{1}{b}}(\eta))\,d\eta+\frac{q}{1-q}\int_{\mathbb{H}^n}u_0^{1-q}(\eta)\psi(0,\delta_{\frac{1}{b}}(\eta))\,d\eta\nonumber\\
&{}&=\frac{1}{1-q}\int_0^{\tau}\int_{\mathbb{H}^n}u^{1-q}(t,\eta)\psi_t(t,\delta_{\frac{1}{b}}(\eta))\,d\eta\,dt-\frac{q}{1-q}\int_{\mathbb{H}^n}u^{1-q}(\tau,\eta)\psi(\tau,\delta_{\frac{1}{b}}(\eta))\,d\eta\nonumber\\
&{}&\quad+\frac{q}{1-q}\int_{\mathbb{H}^n}u_0^{1-q}(\eta)\psi(0,\delta_{\frac{1}{b}}(\eta))\,d\eta,
\end{eqnarray}
and
\begin{equation}\label{5}
\int_0^{\tau}\int_{\mathbb{H}^n}\Delta_{\mathbb{H}}u(t,\eta)\,\psi(t,\delta_{\frac{1}{b}}(\eta))\,d\eta\,dt=\int_0^{\tau}\int_{\mathbb{H}^n}u(t,\eta)\,\Delta_{\mathbb{H}}\left(\psi(t,\delta_{\frac{1}{b}}(\eta))\right)\,d\eta\,dt,
\end{equation}
for all $\tau\in[0,T)$. Inserting \eqref{4}-\eqref{5} into \eqref{3}, we obtain
\begin{eqnarray*}
&{}&\frac{1}{1-q}\int_{\mathbb{H}^n}u^{1-q}(\tau,\eta)\psi(\tau,\delta_{\frac{1}{b}}(\eta))\,d\eta-\frac{1}{1-q}\int_{\mathbb{H}^n}u_0^{1-q}(\eta)\psi(0,\delta_{\frac{1}{b}}(\eta))\,d\eta\\
&{}&=\int_0^\tau\int_{\mathbb{H}^n}u^{p-q}(t,\eta)\psi(t,\delta_{\frac{1}{b}}(\eta))\,d\eta\,dt+\int_0^\tau\int_{\mathbb{H}^n}u(t,\eta)\,\Delta_{\mathbb{H}}\left(\psi(t,\delta_{\frac{1}{b}}(\eta))\right)\,d\eta\,dt\\
&{}&\quad+\frac{1}{1-q}\int_0^\tau\int_{\mathbb{H}^n}u^{1-q}(t,\eta)\psi_t(t,\delta_{\frac{1}{b}}(\eta))\,d\eta\,dt,
\end{eqnarray*}
for all $\tau\in[0,T)$. Let $\widetilde{\eta}:=\delta_{\frac{1}{b}}(\eta)$ i.e. $\eta=\delta_b(\widetilde{\eta})$, then
\begin{eqnarray*}
&{}&\frac{1}{1-q}\int_{\mathbb{H}^n}u^{1-q}(\tau,\delta_b(\widetilde{\eta}))\psi(\tau,\widetilde{\eta})\,d\eta-\frac{1}{1-q}\int_{\mathbb{H}^n}u_0^{1-q}(\delta_b(\widetilde{\eta}))\psi(0,\widetilde{\eta})\,d\eta\\
&{}&=\int_0^\tau\int_{\mathbb{H}^n}u^{p-q}(t,\delta_b(\widetilde{\eta}))\psi(t,\widetilde{\eta})\,d\eta\,dt+b^{-2}\int_0^\tau\int_{\mathbb{H}^n}u(t,\delta_b(\widetilde{\eta}))\,\Delta_{\mathbb{H}}\psi(t,\widetilde{\eta})\,d\eta\,dt\\
&{}&\quad+\frac{1}{1-q}\int_0^\tau\int_{\mathbb{H}^n}u^{1-q}(t,\delta_b(\widetilde{\eta}))\psi_t(t,\widetilde{\eta})\,d\eta\,dt,
\end{eqnarray*}
for all $\tau\in[0,T)$. Using the fact that $d\eta=b^{Q}\,d\widetilde{\eta}$, and dividing the two sides by $b^{Q}$, we get
\begin{eqnarray}\label{6}
&{}&\frac{1}{1-q}\int_{\mathbb{H}^n}u^{1-q}(\tau,\delta_b(\widetilde{\eta}))\psi(\tau,\widetilde{\eta})\,d\widetilde{\eta}
-\frac{1}{1-q}\int_{\mathbb{H}^n}u_0^{1-q}(\delta_b(\widetilde{\eta}))\psi(0,\widetilde{\eta})\,d\widetilde{\eta}\nonumber\\
&{}&=\int_0^T\int_{\mathbb{H}^n}u^{p-q}(t,\delta_b(\widetilde{\eta}))\psi(t,\widetilde{\eta})\,d\widetilde{\eta}\,dt+b^{-2}\int_0^T\int_{\mathbb{H}^n}u(t,\delta_b(\widetilde{\eta}))\,\Delta_{\mathbb{H}}\psi(t,\widetilde{\eta})\,d\widetilde{\eta}\,dt\nonumber\\
&{}&\quad+\frac{1}{1-q}\int_0^T\int_{\mathbb{H}^n}u^{1-q}(t,\delta_b(\widetilde{\eta}))\psi_t(t,\widetilde{\eta})\,d\widetilde{\eta}\,dt,
\end{eqnarray}
for all $\tau\in[0,T)$. As $v(t,\widetilde{\eta})=a u^{1-q}(t,\delta_b(\widetilde{\eta}))$, we can easily obtain
\begin{equation}\label{7}
u^{p-q}(t,\delta_b(\widetilde{\eta})=(1-q)^{-\frac{p-q}{p-1}}\,v^{\sigma}(t,\widetilde{\eta}),\quad\,\,\frac{1}{1-q}u_0^{1-q}(\delta_b(\widetilde{\eta}))=(1-q)^{-\frac{p-q}{p-1}}\,v_0(\widetilde{\eta}),
\end{equation}
and
\begin{equation}\label{8}
b^{-2}u(t,\delta_b(\widetilde{\eta})=(1-q)^{-\frac{p-q}{p-1}}\,v^{m}(t,\widetilde{\eta}),\quad\,\,\frac{1}{1-q}u^{1-q}(t,\delta_b(\widetilde{\eta}))=(1-q)^{-\frac{p-q}{p-1}}\,v(t,\widetilde{\eta}).
\end{equation}
Putting \eqref{7}-\eqref{8} into \eqref{6}, and dividing the two sides by $(1-q)^{-\frac{p-q}{p-1}}$, we conclude that
\begin{eqnarray*}
&{}&\int_{\mathbb{H}^n}v_0(\widetilde{\eta})\psi(0,\widetilde{\eta})\,d\widetilde{\eta}-\int_{\mathbb{H}^n}v_0(\widetilde{\eta})\psi(0,\widetilde{\eta})\,d\widetilde{\eta}\\
&{}&\qquad=\int_0^T\int_{\mathbb{H}^n}v^{\sigma}(t,\widetilde{\eta})\psi(t,\widetilde{\eta})\,d\widetilde{\eta}\,dt+\int_0^T\int_{\mathbb{H}^n}v^{m}(t,\widetilde{\eta})\,\Delta_{\mathbb{H}}\psi(t,\widetilde{\eta})\,d\widetilde{\eta}\,dt\\ &{}&\qquad+\int_0^T\int_{\mathbb{H}^n}v(t,\widetilde{\eta})\psi_t(t,\widetilde{\eta})\,d\widetilde{\eta}\,dt,
\end{eqnarray*}
for all $\tau\in[0,T)$, i.e. $v$ is a weak solution of \eqref{2} on $[0,T)\times\mathbb{H}^n$.
\end{proof}
Set $BC(\mathbb{H}^n)= C(\mathbb{H}^n)\cap L^\infty(\mathbb{H}^n)$. Using Lemma \ref{lemma1} and Theorems \ref{theo1} and \ref{theo3} we conclude the following results.
\begin{theorem}
Let  $0<u_0\in BC(\mathbb{H}^n)\cap L^1(\mathbb{H}^n)$, $n\geq1$, $0\leq q<1$, $p>1$. If
$$q+1\leq p\leq p_c=q+1+\frac{2(1-q)}{Q},$$
then there are no positive global weak solutions $u\in C^{1,0}_{t,x}([0,\infty)\times\mathbb{H}^n)$ of \eqref{1} such that $u(t,\cdotp)\in H^2(\mathbb{H}^n)$ a.e. $t\in[0,\infty)$. Note that, in the case of $q+1<p$ we just need $u_0\in C(\mathbb{H}^n)\cap L^1(\mathbb{H}^n)$.
\end{theorem}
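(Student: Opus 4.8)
The plan is to deduce the statement from the porous-medium results of this section via the change of unknown of Lemma~\ref{lemma1}. Arguing by contradiction, suppose $u$ is a positive global weak solution of \eqref{1} with $u\in C^{1,0}_{t,x}([0,\infty)\times\mathbb{H}^n)$ and $u(t,\cdot)\in H^2(\mathbb{H}^n)$ for a.e. $t\geq0$. Since $0<u_0\in C(\mathbb{H}^n)\cap L^1_{loc}(\mathbb{H}^n)$, Lemma~\ref{lemma1} applies on every $[0,T)$ and yields a positive global weak solution $v(t,\eta):=a\,u^{1-q}(t,\delta_b(\eta))$ of \eqref{2}, with $m=\tfrac{1}{1-q}\geq1$, $\sigma=\tfrac{p-q}{1-q}>1$, and initial datum $v_0=a\,u_0^{1-q}(\delta_b(\cdot))$.

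Next I would rewrite the assumption on $p$ as an assumption on $(m,\sigma)$. A direct computation gives $\sigma-m=\frac{p-q-1}{1-q}$ and $m+\tfrac2Q-\sigma=\frac{(1+q-p)Q+2(1-q)}{Q(1-q)}$, so that
\begin{equation*}
q+1\leq p\leq p_c=q+1+\frac{2(1-q)}{Q}\iff m\leq\sigma\leq m+\frac2Q,
\end{equation*}
with $p=q+1\iff\sigma=m$ and $q+1<p\iff m<\sigma$; note also that $m>1$ exactly when $q>0$, while if $q=0$ then necessarily $p>q+1$ because $p>1$.

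The argument then splits according to whether the exponent sits at the lower endpoint. If $q+1<p\leq p_c$, then $m<\sigma\leq m+\tfrac2Q$ and I would apply Theorem~\ref{theo1}(i); here one only needs $v_0\in L^1_{loc}(\mathbb{H}^n)$ with $v_0\not\equiv0$ (immediate from $0<u_0\in C(\mathbb{H}^n)\cap L^1(\mathbb{H}^n)$), and the capacity estimate behind Theorem~\ref{theo1}(i) uses only $\sigma>m$ and $\sigma>1$, hence remains valid also when $m=1$. This is a contradiction. If $p=q+1$, then $\sigma=m$ (and $q>0$, so $m>1$), and I would apply Theorem~\ref{theo1}(iii) (or the case $m=\sigma$ of Theorem~\ref{theo3}), after checking that $v_0\in L^1(\mathbb{H}^n)\cap L^\infty(\mathbb{H}^n)$ and that $v\in L^\infty_{loc}((0,\infty);L^\infty(\mathbb{H}^n))$ (and, if one uses Theorem~\ref{theo3}, also $v\in C([0,\infty);L^1(\mathbb{H}^n))$), all of which follow from $0<u_0\in BC(\mathbb{H}^n)\cap L^1(\mathbb{H}^n)$ and the assumed regularity of $u$ via $v=a\,u^{1-q}(t,\delta_b(\cdot))$; again a contradiction. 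Finally, since $1-q>0$, a blow-up of $v$, namely $\sup_\eta v(t,\eta)\to\infty$ as $t\to T^*$, transfers to $\sup_\eta u(t,\eta)\to\infty$ as $t\to T^*$.

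I expect the real difficulty to lie in the bookkeeping rather than in any new estimate: one must confirm that the hypotheses of Lemma~\ref{lemma1} are precisely the $C^{1,0}_{t,x}$, $H^2$ and positivity assumptions imposed here (positivity being what makes $u^{-q}$ and $u^{1-q}$ meaningful, and the regularity what licenses the integrations by parts in that lemma), and one must keep careful track of the functional class into which the transformed datum $v_0$ and solution $v$ fall, so that they match exactly the hypotheses of Theorems~\ref{theo1} and~\ref{theo3}. The lower endpoint $p=q+1$ is precisely where the full strength $u_0\in BC\cap L^1$ (and the boundedness of $u$) is genuinely used, whereas for $q+1<p\leq p_c$ the weaker hypothesis $u_0\in C\cap L^1$ suffices, as noted right after the statement.
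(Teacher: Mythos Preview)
Your proposal is correct and follows exactly the route the paper takes: the paper does not give a separate proof of this theorem at all, but simply states that it follows from Lemma~\ref{lemma1} combined with Theorems~\ref{theo1} and~\ref{theo3}, which is precisely the reduction you carry out. Your case analysis ($q+1<p\leq p_c$ via Theorem~\ref{theo1}(i), $p=q+1$ via Theorem~\ref{theo1}(iii) or Theorem~\ref{theo3}) and your translation $\sigma-m=\frac{p-q-1}{1-q}$ are the intended ones, and your remarks that the proof of Theorem~\ref{theo1}(i) does not actually use $m>1$ (only $\sigma>m$) and that only local integrability of $v_0$ is needed there are accurate observations that the paper leaves implicit.
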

\begin{remark} When $q=0$, the critical exponent $p_c=1+\frac{2}{Q}$ coincides with the critical exponent obtained in \cite{Zhang} for the semilinear diffusion equations on $\mathbb{H}^n.$
\end{remark}
\begin{theorem}
Let  $0<u_0\in C(\mathbb{H}^n)\cap L^1(\mathbb{H}^n)$, $n\geq1$, $0\leq q<1$, $p>1$.  Assume that there exists a constant $\varepsilon_1>0$ such that, for every $0<\gamma<(1-q)Q$, the initial datum verifies the following assumption:
$$u_0(\eta)\geq \varepsilon_1(1+|\delta_{\frac{1}{b}}(\eta)|_{_{\mathbb{H}}}^2)^{-\frac{\gamma}{2(1-q)}}.$$
 If
$$q+1< p< q+1+\frac{2(1-q)}{\gamma},$$
then there are no positive global weak solutions $u\in C^{1,0}_{t,x}([0,\infty)\times\mathbb{H}^n)$ of \eqref{1} such that $u(t,\cdotp)\in H^2(\mathbb{H}^n)$ a.e. $t\in[0,\infty)$.
\end{theorem}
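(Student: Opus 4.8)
The plan is to deduce the statement from Theorem~\ref{theo1}(ii) by means of the change of unknown provided by Lemma~\ref{lemma1}. Arguing by contradiction, suppose that $u>0$ is a positive global weak solution of \eqref{1} with $u\in C^{1,0}_{t,x}([0,\infty)\times\mathbb{H}^n)$ and $u(t,\cdot)\in H^2(\mathbb{H}^n)$ for a.e.\ $t\geq0$. Since $0<u_0\in C(\mathbb{H}^n)\cap L^1(\mathbb{H}^n)\subset C(\mathbb{H}^n)\cap L^1_{loc}(\mathbb{H}^n)$, the hypotheses of Lemma~\ref{lemma1} are met, and it produces a positive global weak solution $v(t,\eta)=a\,u^{1-q}(t,\delta_b(\eta))$ of \eqref{2} with $m=\tfrac{1}{1-q}\geq1$ and $\sigma=\tfrac{p-q}{1-q}>1$ (and $a,b$ as in the lemma).

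Next I would translate every hypothesis of the present theorem into those of Theorem~\ref{theo1}(ii). The exponent conditions match exactly after multiplying through by $1-q>0$: one checks $p>q+1\iff\sigma>m$ and $p<q+1+\tfrac{2(1-q)}{\gamma}\iff\sigma<m+\tfrac{2}{\gamma}$, while $0<\gamma<(1-q)Q\leq Q$ places $\gamma$ in the admissible range of Theorem~\ref{theo1}(ii). For the initial datum, evaluating the assumed lower bound at $\delta_b(\eta)$ and using $\delta_{1/b}\circ\delta_b=\mathrm{id}$ on $\mathbb{H}^n$ gives $u_0(\delta_b(\eta))\geq\varepsilon_1\bigl(1+|\eta|_{\mathbb{H}}^2\bigr)^{-\gamma/(2(1-q))}$, hence
$$v_0(\eta)=a\,u_0^{1-q}(\delta_b(\eta))\geq a\,\varepsilon_1^{1-q}\,\bigl(1+|\eta|_{\mathbb{H}}^2\bigr)^{-\gamma/2},$$
which is precisely the lower bound demanded in Theorem~\ref{theo1}(ii) with $\varepsilon:=a\,\varepsilon_1^{1-q}>0$. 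Moreover $v_0$ is continuous and strictly positive, so $v_0\in L^1_{loc}(\mathbb{H}^n)$ and $v_0\not\equiv0$, which is all the weak-solution framework of \eqref{2} and the proof of Theorem~\ref{theo1}(ii) actually use. Applying Theorem~\ref{theo1}(ii) then contradicts the existence of the global weak solution $v$, so no such $u$ exists.

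This reduction is essentially bookkeeping, so I do not expect a genuine obstacle; the two points that deserve care are (i) verifying that the decay assumption on $u_0$ is stated in exactly the $\delta_{1/b}$-adjusted form needed to cancel the dilation in Lemma~\ref{lemma1} — which is the reason $\delta_{1/b}$ appears inside the Heisenberg norm — and (ii) observing that $v_0\in L^1_{loc}(\mathbb{H}^n)$ is enough, since the proof of Theorem~\ref{theo1}(ii) only exploits the pointwise lower bound on $v_0$ together with local integrability, so one need not worry that $u_0\in L^1(\mathbb{H}^n)$ may fail to force $v_0=a\,u_0^{1-q}(\delta_b(\cdot))\in L^1(\mathbb{H}^n)$. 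Finally, when $q=0$ one has $m=1$, $a=b=1$, $v=u$, $\sigma=p$, and the same estimates used in the proof of Theorem~\ref{theo1}(ii) go through verbatim, so that borderline case is covered as well.
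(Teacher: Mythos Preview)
Your proposal is correct and follows exactly the route the paper takes: the paper simply states that the result is obtained ``using Lemma~\ref{lemma1} and Theorems~\ref{theo1} and~\ref{theo3}'' without writing out any details, so your reduction to Theorem~\ref{theo1}(ii) via Lemma~\ref{lemma1} is precisely the intended argument. Your additional remark that the proof of Theorem~\ref{theo1}(ii) only needs $v_0\in L^1_{loc}(\mathbb{H}^n)$ together with the pointwise lower bound (rather than $v_0\in L^1(\mathbb{H}^n)$) is a genuine point the paper glosses over, and your observation is accurate since that proof derives the contradiction directly from $\varepsilon\leq C\,T^{\frac{\sigma-m}{2(\sigma-1)}\gamma-\frac{1}{\sigma-1}}$ without ever passing to a global integral of $v_0$.
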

\begin{theorem}
Let  $n\geq1$, $0\leq q<1$, and $p>1$.\\
 If $q+1<p$, then for each $0<\widetilde{w}\in  L^1(\mathbb{H}^n)\cap BC(\mathbb{H}^n)$, there is $\widetilde{B}>0$ such that if $u_0=\widetilde{B}\widetilde{w}$ there are no positive global weak solutions $u\in C([0,\infty);L^1(\mathbb{H}^n))\cap C^{1,0}_{t,x}([0,\infty)\times\mathbb{H}^n)$ of \eqref{1} such that $u(t,\cdotp)\in H^2(\mathbb{H}^n)$ a.e. $t\in[0,\infty)$.\\
  If $q+1=p$, then for each $0< u_0 \in  L^1(\mathbb{H}^n)\cap BC(\mathbb{H}^n)$ there are no positive global weak solutions $u\in C([0,\infty);L^1(\mathbb{H}^n))\cap C^{1,0}_{t,x}([0,\infty)\times\mathbb{H}^n)$ of \eqref{1} such that $u(t,\cdotp)\in H^2(\mathbb{H}^n)$ a.e. $t\in[0,\infty)$.\\
More precisely, there exists a $T^{*}>0$ such that
$$\sup_{\eta\in\mathbb{H}^n} u(t,\eta)\longrightarrow\infty,\qquad\hbox{as}\,\,t\rightarrow T^{*}.$$
\end{theorem}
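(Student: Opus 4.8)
The plan is to argue by contradiction and reduce to Theorem \ref{theo3} through the substitution of Lemma \ref{lemma1}. Suppose $u$ is a positive global weak solution of \eqref{1} in the stated class; in particular $u\in C^{1,0}_{t,x}([0,\infty)\times\mathbb{H}^n)$, $u(t,\cdot)\in H^2(\mathbb{H}^n)$ for a.e.\ $t\ge0$, and $0<u_0\in C(\mathbb{H}^n)\cap L^1_{loc}(\mathbb{H}^n)$, which are exactly the hypotheses of Lemma \ref{lemma1}. Hence $v(t,\eta):=a\,u^{1-q}(t,\delta_b(\eta))$, with $a$, $b$, $\delta_b$ as in Lemma \ref{lemma1}, is a positive global weak solution of \eqref{2} with
$$m=\tfrac{1}{1-q}\ge1,\qquad \sigma=\tfrac{p-q}{1-q}>1,\qquad v_0(\eta)=a\,u_0^{1-q}(\delta_b(\eta)),$$
and since $\sigma-m=\tfrac{p-1-q}{1-q}$, the condition $q+1<p$ is equivalent to $m<\sigma$ and $q+1=p$ is equivalent to $m=\sigma$; the two alternatives of the statement thus match the two cases of Theorem \ref{theo3}.

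Next one checks that $v$ falls under Theorem \ref{theo3}. The dilation $\delta_b$ is a homeomorphism of $\mathbb{H}^n$ with Jacobian $b^{Q}$ and $0\le q<1$, so the regularity and integrability of $u$ should transfer to $v$: $v\in C([0,\infty);L^1(\mathbb{H}^n))\cap L^\infty_{loc}((0,\infty);L^\infty(\mathbb{H}^n))$ and $v_0\in BC(\mathbb{H}^n)$ when $u_0\in BC(\mathbb{H}^n)$, with $\|v_0\|_{L^1}=a\,b^{-Q}\|u_0^{1-q}\|_{L^1}$. When $q+1=p$ (so $m=\sigma$): for any $0<u_0\in L^1(\mathbb{H}^n)\cap BC(\mathbb{H}^n)$ the datum $v_0$ is positive and in $L^1\cap BC$, and Theorem \ref{theo3} (case $m=\sigma$) applies with no size restriction. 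When $q+1<p$ (so $m<\sigma$): given $0<\widetilde{w}\in L^1(\mathbb{H}^n)\cap BC(\mathbb{H}^n)$ put $w^\sharp(\eta):=a\,\widetilde{w}^{1-q}(\delta_b(\eta))$, so that $v_0=\widetilde{B}^{1-q}w^\sharp$ with $w^\sharp>0$, $w^\sharp\in BC$; inspecting the proof of Theorem \ref{theo3} (case $m<\sigma$) one sees that the conclusion holds as soon as $\int_{\mathbb{H}^n}v_0\,\Theta_1\,d\eta$ exceeds the explicit threshold $\lambda^{1/(\sigma-m)}$ there ($\Theta_1$ being the normalized weight of Lemma \ref{lemma4} with $\varepsilon=1$), which, since $\int_{\mathbb{H}^n}w^\sharp\Theta_1\,d\eta\in(0,\infty)$, is achieved by taking $\widetilde{B}$ large enough.

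In both cases Theorem \ref{theo3} yields $T^{*}>0$ with $\sup_{\eta\in\mathbb{H}^n}v(t,\eta)\to\infty$ as $t\to T^{*}$. Because $\delta_b$ is bijective and $z\mapsto z^{1-q}$ is increasing on $[0,\infty)$, we have $\sup_{\eta\in\mathbb{H}^n}v(t,\eta)=a\big(\sup_{\eta\in\mathbb{H}^n}u(t,\eta)\big)^{1-q}$, hence $\sup_{\eta\in\mathbb{H}^n}u(t,\eta)=\big(a^{-1}\sup_{\eta\in\mathbb{H}^n}v(t,\eta)\big)^{1/(1-q)}\to\infty$ as $t\to T^{*}$. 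This is the asserted blow-up and contradicts the assumed global existence in the stated class.

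The main obstacle I foresee is the second step: ensuring that after the power $(1-q)$ and the dilation the transformed solution $v$ genuinely lies in the class in which the energy method of Theorem \ref{theo3} (using $\Theta_1$ as an admissible test function via Lemma \ref{lemma3}) is valid — in particular the local-in-time $L^\infty$ control and the finiteness of $\int v_0\Theta_1$ — which requires using the regularity of $u$ with some care, or, if one prefers, re-running the energy estimate of Theorem \ref{theo3} directly for $v$ rather than quoting it verbatim (note that $\widetilde{w}^{1-q}$ need not be globally integrable even when $\widetilde{w}\in L^1\cap L^\infty$, so the matching is with the \emph{energy argument} rather than the literal statement). Everything else is the routine scaling dictionary between \eqref{1} and \eqref{2}.
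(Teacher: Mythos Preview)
Your approach is exactly the one the paper uses: the paper's entire proof is the single sentence ``Using Lemma \ref{lemma1} and Theorems \ref{theo1} and \ref{theo3} we conclude the following results,'' and you have correctly unpacked this---transform $u$ to $v$ via Lemma \ref{lemma1}, match $q+1\lessgtr p$ with $m\lessgtr\sigma$, invoke Theorem \ref{theo3}, and pull the blow-up back through the bijective dilation and the monotone power $z\mapsto z^{1-q}$. You are in fact more careful than the paper: the integrability/boundedness issues you flag (whether $v$ literally lies in $C([0,\infty);L^1)\cap L^\infty_{loc}L^\infty$, and whether $\widetilde w^{1-q}\in L^1$) are not addressed there either, and your suggested remedy---observe that the energy argument of Theorem \ref{theo3} only needs $J(0)=\int v_0\Theta_1<\infty$ (guaranteed since $v_0\in L^\infty$ and $\Theta_1$ decays exponentially) and the weak formulation against $\Theta_1$---is the right way to close the gap.
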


\subsection{The case of $q\geq1$} In this subsection, we present the results for the case $q\geq1$ and $1+q<p$.
\begin{theorem}\label{theo4}
Let  $n\geq1$, $q\geq1$, $p>1$. Suppose that $q+1<p$. For each $0<w\in C(\mathbb{H}^n)\cap L^\infty(\mathbb{H}^n)$, there is $A>0$ such that if $u_0=Aw$ then there are no positive global classical solutions of \eqref{1}.
\end{theorem}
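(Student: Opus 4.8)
The plan is to run the Kaplan eigenfunction (energy) method as in the proof of Theorem~\ref{theo03}, but with a weighted functional chosen to accommodate the genuinely non-divergence diffusion term $u^{q}\Delta_{\mathbb{H}}u$. Fix a Heisenberg ball $\Omega\subset\mathbb{H}^{n}$, let $\lambda_{1}>0$ be the first Dirichlet eigenvalue of $-\Delta_{\mathbb{H}}$ on $\Omega$ (which exists by \cite{Chen1}) and $\Lambda>0$ the corresponding eigenfunction normalized by $\int_{\Omega}\Lambda\,d\eta=1$; the Hopf-type lemma on $\mathbb{H}^{n}$ (\cite[Lemma~2.1]{BCutri}) gives $\partial_{\nu}\Lambda\le0$ on $\partial\Omega$. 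Arguing by contradiction, assume $u>0$ is a positive global classical solution of \eqref{1}; on $\overline{\Omega}\times[0,T]$ it is continuous and bounded away from $0$, so the functionals below are $C^{1}$ in $t$. One cannot simply reuse $\int_{\Omega}u\,\Lambda\,d\eta$ here, because integrating $\int_{\Omega}u^{q}\Delta_{\mathbb{H}}u\,\Lambda\,d\eta$ by parts leaves the uncontrollable negative term $-q\int_{\Omega}u^{q-1}|\nabla_{\mathbb{H}}u|^{2}\Lambda\,d\eta$, and for $q\ge1$ there is, unlike in Lemma~\ref{lemma1} (valid only for $0\le q<1$), no admissible change of variables sending \eqref{1} to \eqref{2}. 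Hence one must work with \eqref{1} directly.

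For $q>1$ I would take $Y(t):=\int_{\Omega}u^{1-q}(t,\eta)\,\Lambda(\eta)\,d\eta>0$. Upon differentiation the factor $(1-q)u^{-q}$ turns $u^{q}\Delta_{\mathbb{H}}u$ into $(1-q)\Delta_{\mathbb{H}}u$, a divergence, so Green's formula on $\mathbb{H}^{n}$ (\cite{Gav,Ruzhansky2}) together with $-\Delta_{\mathbb{H}}\Lambda=\lambda_{1}\Lambda$, $\Lambda|_{\partial\Omega}=0$ and $\partial_{\nu}\Lambda\le0$ give, with $a(t):=\int_{\Omega}u\,\Lambda\,d\eta$,
\[
Y'(t)\le(q-1)\Big[\lambda_{1}\,a(t)-\int_{\Omega}u^{p-q}\Lambda\,d\eta\Big],
\]
with no $|\nabla_{\mathbb{H}}u|^{2}$ term surviving. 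Since $p-q>1$ and $\Lambda\,d\eta$ is a probability measure, Jensen's inequality (equivalently, H\"{o}lder's inequality for negative exponent, cf.\ \cite[p.~27]{Adams}) yields $\int_{\Omega}u^{p-q}\Lambda\,d\eta\ge a^{p-q}$ and $a\ge Y^{-1/(q-1)}$. Thus as soon as $a(t)\ge c_{1}:=(2\lambda_{1})^{1/(p-q-1)}$ one obtains $Y'(t)\le-(q-1)\lambda_{1}\,Y(t)^{-1/(q-1)}$, i.e.\ $\frac{d}{dt}Y^{q/(q-1)}\le-q\lambda_{1}$. A continuity/bootstrap argument as in Step~2 of Theorem~\ref{theo03} shows that if $u_{0}=Aw$ with $A$ large then $Y(0)=A^{1-q}\int_{\Omega}w^{1-q}\Lambda\,d\eta$ is small enough that $a(0)\ge Y(0)^{-1/(q-1)}>c_{1}$, and this lower bound persists because $Y$ is then decreasing; here $\int_{\Omega}w^{1-q}\Lambda\,d\eta<\infty$ since $w$ is continuous and positive on the compact $\overline{\Omega}$. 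Integrating the differential inequality gives $Y^{q/(q-1)}(t)\le Y^{q/(q-1)}(0)-q\lambda_{1}t$, which forces $Y(t)\to0$ and hence $\sup_{\mathbb{H}^{n}}u(t,\cdot)\ge a(t)\ge Y(t)^{-1/(q-1)}\to\infty$ at the finite time $T^{*}=Y^{q/(q-1)}(0)/(q\lambda_{1})$, contradicting global existence.

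The borderline $q=1$ is treated in the same way with $Y(t):=\int_{\Omega}\ln u(t,\eta)\,\Lambda(\eta)\,d\eta$: now $u^{-1}\cdot u\,\Delta_{\mathbb{H}}u=\Delta_{\mathbb{H}}u$ and the same integration by parts gives $Y'(t)\ge-\lambda_{1}\,a(t)+\int_{\Omega}u^{p-1}\Lambda\,d\eta$, with $p-1>1$; Jensen yields $\int_{\Omega}u^{p-1}\Lambda\,d\eta\ge a^{p-1}$ and $a\ge e^{Y}$, so once $a(t)\ge(2\lambda_{1})^{1/(p-2)}$ one has $Y'(t)\ge\lambda_{1}e^{Y(t)}$, whence $e^{-Y(t)}\le e^{-Y(0)}-\lambda_{1}t$ and $Y(t)\to+\infty$ in finite time. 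Taking $A$ large makes $Y(0)=\ln A+\int_{\Omega}\ln w\,\Lambda\,d\eta$ (again finite) large enough to start the bootstrap, and $\sup_{\mathbb{H}^{n}}u(t,\cdot)\ge e^{Y(t)}\to\infty$.

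The main obstacle is exactly the non-divergence form of $u^{q}\Delta_{\mathbb{H}}u$, and the single idea that unlocks the argument is that the substitution-free functionals $\int_{\Omega}u^{1-q}\Lambda\,d\eta$ (for $q>1$) and $\int_{\Omega}\ln u\,\Lambda\,d\eta$ (for $q=1$) collapse this term to $\Delta_{\mathbb{H}}u$ upon differentiation, so that Green's formula and the Heisenberg Hopf lemma apply with no leftover gradient term; everything else reduces to the standard superlinear-ODE blow-up mechanism, together with the observation that a large amplitude $A$ pushes $Y(0)$ into the regime where the resulting differential inequality drives $Y$ to its extinction (resp.\ explosion) value in finite time.
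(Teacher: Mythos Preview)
Your proposal is correct and follows essentially the same approach as the paper: the Kaplan eigenfunction method with the weighted functionals $\int_{\Omega}u^{1-q}\Lambda\,d\eta$ for $q>1$ and $\int_{\Omega}\ln u\,\Lambda\,d\eta$ for $q=1$, chosen precisely so that differentiation cancels the $u^{q}$ factor and Green's formula plus the Heisenberg Hopf lemma apply; the Jensen/H\"{o}lder estimates, the bootstrap showing persistence of the smallness (resp.\ largeness) of $Y$, and the resulting ODE blow-up are all as in the paper, up to harmless sign and normalization conventions (the paper works with $y=\frac{1}{q-1}\int_{\Omega}u^{1-q}\Lambda$ and $y=-\int_{\Omega}\ln u\,\Lambda$, so its inequalities point the opposite way).
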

\begin{remark} In Theorem \ref{theo4} there are no results for cases $q+1=p$ and $q+1>p$. Therefore, these questions are still open.
\end{remark}
\begin{proof}[Proof of Theorem \ref{theo4}] Suppose, on the contrary, that $u$ is a positive global classical solution of \eqref{1}, i.e. a positive classical solution of \eqref{1} on $[0,T]$ for all $T>0$. Let $\Omega\subset \mathbb{H}^n$ be a Heisenberg unit ball, and let $\lambda_1>0$ be the principal eigenvalue of $-\Delta_{ \mathbb{H}}$ with Dirichlet condition and $\Lambda>0$ its corresponding eigenfunction such that $\int_{\Omega}\Lambda(\eta)\,d\eta=1$ (The existence of such eigenvalue has been proved by Chen and Luo \cite{Chen1}). In order to get a contradiction, we are going to apply the energy method. We divide our proof into two cases.\\

\noindent{\bf \underline{Case of $q>1$.}}\\
\noindent{\bf Step 1.} Let
$$y(t):=\frac{1}{q-1}\int_{\Omega}u^{1-q}(t,\eta)\Lambda(\eta)\,d\eta,\quad t\in[0,T].$$  \\
As $u$ is a classical solution, we have
$$y\in C([0,T])\cap C^1((0,T]),$$
and
\begin{eqnarray*}
y^{\prime}(t)&=&-\int_{\Omega}\frac{u_t}{u^q}(t,\eta)\Theta(\eta)\,d\eta\\
&=&-\int_{\Omega}\Delta u(t,\eta)\Lambda(\eta)\,d\eta-\int_{\Omega}u^{p-q}(t,\eta)\Lambda(\eta)\,d\eta\\
&=&\lambda_1\int_{\Omega} u(t,\eta)\Lambda(\eta)\,d\eta+\int_{\partial\Omega} u(t,\sigma)\partial_{\nu}\Lambda(\sigma)\,d\sigma-\int_{\Omega}u^{p-q}(t,\eta)\Lambda(\eta)\,d\eta.
\end{eqnarray*}
It follows from the Hopf lemma on the Heisenberg group $\mathbb{H}^n$ (see \cite[Lemma 2.1]{BCutri}), that $\partial_{\nu}\Lambda\leq 0$ on $\partial\Omega$. Then we have
\begin{equation}\label{20}
y^{\prime}(t)\leq \lambda_1\int_{\Omega} u(t,\eta)\Lambda(\eta)\,d\eta-\int_{\Omega}u^{p-q}(t,\eta)\Lambda(\eta)\,d\eta.
\end{equation}
In order to apply the energy method, i.e. obtaining a differential inequality for $y(t)$, we need to estimate the right-hand side of \eqref{20}. Let $u_0=Aw$, where $0<w\in C(\mathbb{H}^n)\cap L^\infty(\mathbb{H}^n)$ and $A\gg1$ is a positive real number such that
$$A>(2\lambda_1)^{\frac{1}{p-q-1}}\left(\int_{\Omega}w^{1-q}(\eta)\Lambda(\eta)\,d\eta\right)^{\frac{1}{q-1}}.$$
This implies that $y_0:=y(0)<c_0$, with
$$c_0:=(q-1)^{-1}\left(2\lambda_1\right)^{-\frac{q-1}{p-q-1}}.$$
\noindent{\bf Step 2.}  We have $y(t)\leq c_0$, for all $t\in(0,T]$. Indeed, let $$T^{*}=\inf\{0<t\leq T;\,y(t)\leq c_0\}\leq T.$$ Since $y$ is continuous and $y(0)<c_0$, we have $T^{*}>0$. We claim that $T^*=T$. Otherwise, we have $y(t)<c_0$ for all $t\in(0,T^{*})$ and $y(T^{*})=c_0$, i.e. particularly, $y(t)\leq c_0$ for all $t\in[0,T^{*}]$. On the other hand, by H\"{o}lder's inequality for negative exponent
$$\int|fg|\,d\mu\geq\left(\int |f|^{r_1}\,d\mu\right)^{\frac{1}{r_1}}\left(\int |g|^{r_2}\,d\mu\right)^{\frac{1}{r_2}},\, \hbox{for all}\,\,r_1<0, 0<r_2<1, \,\frac{1}{r_1}+\frac{1}{r_2}=1,$$
 with $r_1=1-q$ and $r_2=\frac{q-1}{q}$, we have
\begin{eqnarray}\label{21}
\int_{\Omega}u(t,\eta)\Lambda(\eta)\,d\eta&=&\int_{\Omega}u(t,\eta)\Lambda^{-\frac{1}{q-1}}(\eta)\Lambda^{\frac{q}{q-1}}(\eta)\,d\eta\nonumber\\
&\geq&\left(\int_{\Omega}u^{1-q}(t,\eta)\Lambda(\eta)\,d\eta\right)^{-\frac{1}{q-1}}\left(\int_{\Omega}\Lambda(\eta)\,d\eta\right)^{\frac{q}{q-1}}\nonumber\\
&=&\left(\int_{\Omega}u^{1-q}(t,\eta)\Lambda(\eta)\,d\eta\right)^{-\frac{1}{q-1}}\nonumber\\
&=&(q-1)^{-\frac{1}{q-1}}y^{-\frac{1}{q-1}}(t),
\end{eqnarray}
 for all $t\in[0,T]$, where we have used that $\displaystyle\int_{\Omega}\Lambda(\eta)\,d\eta=1$. In addition, using the standard H\"{o}lder's inequality, we have
\begin{eqnarray*}
\int_{\Omega}u(t,\eta)\Lambda(\eta)\,d\eta&=&\int_{\Omega}u(t,\eta)\Lambda^{\frac{1}{p-q}}(\eta)\Lambda^{\frac{p-q-1}{p-q}}(\eta)\,d\eta\\
&\leq& \left(\int_{\Omega}u^{p-q}(t,\eta)\Lambda(\eta)\,d\eta\right)^{\frac{1}{p-q}}\left(\int_{\Omega}\Lambda(\eta)\,d\eta\right)^{\frac{p-q-1}{p-q}}\\
&=& \left(\int_{\Omega}u^{p-q}(t,\eta)\Lambda(\eta)\,d\eta\right)^{\frac{1}{p-q}},
\end{eqnarray*}
which implies, using \eqref{21} and $y(t)\leq c_0$, that
\begin{eqnarray}\label{22}
\int_{\Omega}u^{p-q}(t,\eta)\Lambda(\eta)\,d\eta&\geq&\left(\int_{\Omega}u(t,\eta)\Lambda(\eta)\,d\eta\right)^{p-q}\nonumber\\
&=&\left(\int_{\Omega}u(t,\eta)\Lambda(\eta)\,d\eta\right)^{p-q-1}\left(\int_{\Omega}u(t,\eta)\Lambda(\eta)\,d\eta\right)\nonumber\\
&\geq&(q-1)^{-\frac{p-q-1}{q-1}}y^{-\frac{p-q-1}{q-1}}(t)\left(\int_{\Omega}u(t,\eta)\Lambda(\eta)\,d\eta\right)\nonumber\\
&\geq&(q-1)^{-\frac{p-q-1}{q-1}}c_0^{-\frac{p-q-1}{q-1}}\left(\int_{\Omega}u(t,\eta)\Lambda(\eta)\,d\eta\right)\nonumber\\
&=&2\lambda_1\int_{\Omega}u(t,\eta)\Lambda(\eta)\,d\eta,
\end{eqnarray}
 for all $t\in(0,T^{*}]$. Therefore, by \eqref{20} and \eqref{22}, we arrive at
\begin{align*}y^{\prime}(t)&\leq\lambda_1\int_{\Omega}u(t,\eta)\Lambda(\eta)\,d\eta-2\lambda_1\int_{\Omega}u(t,\eta)\Lambda(\eta)\,d\eta\\&=-\lambda_1\int_{\Omega}u(t,\eta)\Lambda(\eta)\,d\eta,\end{align*}
which implies, using \eqref{21}, that
$$y^{\prime}(t)\leq -\lambda_1(q-1)^{-\frac{1}{q-1}}y^{-\frac{1}{q-1}}(t)\leq0,\qquad\hbox{ for all}\,\,t\in(0,T^{*}],
$$
and hence
$$c_0=y(T^{*})\leq y(0)=y_0<c_0;$$
contradiction.\\
\noindent{\bf Step 3.} From Step 2, we have $y(t)\leq c_0$, for all $t\in[0,T]$. This implies, using \eqref{21}-\eqref{22}, that
$$
y^{\prime}(t)\leq -\lambda_1(q-1)^{-\frac{1}{q-1}}y^{-\frac{1}{q-1}}(t),\qquad\hbox{ for all}\,\,t\in(0,T],
$$
so
$$0\leq y(t)\leq \left(y_0^{\frac{q}{q-1}}-c_1\,t\right)^{\frac{q-1}{q}},\qquad\hbox{ for all}\,\,t\in[0,T],$$
where $c_1=\lambda_1 q(q-1)^{-\frac{q}{q-1}}$, and particularly we have
$$T\leq c_1^{-1}y_0^{\frac{q}{q-1}}\leq c_1^{-1}c_0^{\frac{q}{q-1}},$$
which implies a contradiction by choosing from the beginning $T$ big enough, namely $T>c_1^{-1}c_0^{\frac{q}{q-1}}$. This completes the proof.\\

\noindent{\bf \underline{The case of $q=1$.}}\\
\noindent{\bf Step 1.} Let
$$y(t):=-\int_{\Omega}\ln (u(t,\eta))\Lambda(\eta)\,d\eta,\quad t\in[0,T].$$  \\
As $u$ is a classical solution, we have
$$y\in C([0,T])\cap C^1((0,T]),$$
and
\begin{eqnarray*}
y^{\prime}(t)&=&-\int_{\Omega}\frac{u_t}{u}(t,\eta)\Lambda(\eta)\,d\eta\\
&=&-\int_{\Omega}\Delta_{\mathbb{H}} u(t,\eta)\Lambda(\eta)\,d\eta-\int_{\Omega}u^{p-1}(t,\eta)\Lambda(\eta)\,d\eta\\
&=&\lambda_1\int_{\Omega} u(t,\eta)\Lambda(\eta)\,d\eta+\int_{\partial\Omega} u(t,\sigma)\partial_{\nu}\Lambda(\sigma)\,d\sigma\\
&{}&-\int_{\Omega}u^{p-1}(t,\eta)\Lambda(\eta)\,d\eta.
\end{eqnarray*}
As $\partial_{\nu}\Lambda\leq0$ on $\partial\Omega$ by the Hopf type lemma on the Heisenberg group $\mathbb{H}^n$ (see \cite[Lemma 2.1]{BCutri}), we arrive at
\begin{equation}\label{23}
y^{\prime}(t)\leq \lambda_1\int_{\Omega} u(t,\eta)\Lambda(\eta)\,d\eta-\int_{\Omega}u^{p-1}(t,\eta)\Lambda(\eta)\,d\eta.
\end{equation}
In order to apply the energy method, i.e. obtaining a differential inequality in $y(t)$, we need to estimate the right-hand side of \eqref{23}. Let $u_0=Aw$, where $0<w\in C(\mathbb{H}^n)\cap L^\infty(\mathbb{H}^n)$ and $A\gg1$ is a positive real number such that
$$A>(2\lambda_1)^{\frac{1}{p-2}}e^{-\int_{\Omega}\ln(w(\eta))\Lambda(\eta)\,d\eta}.$$
This implies that $y_0:=y(0)<c_2$, with
$$c_2:=-\frac{1}{p-2}\ln(2\lambda_1).$$
\noindent{\bf Step 2.}  We have $y(t)\leq c_2$, for all $t\in(0,T]$. Indeed, let $T^{*}=\inf\{0<t\leq T;\,y(t)\leq c_2\}\leq T$. Since $y$ is continuous and $y(0)<c_2$, we have $T^{*}>0$. We claim that $T^*=T$. Otherwise, we have $y(t)<c_2$ for all $t\in(0,T^{*})$ such that $y(T^{*})=c_2$, particularly we have $y(t)\leq c_2$ for all $t\in[0,T^{*}]$. On the other hand, by Jensen's inequality with $\displaystyle\int_{\Omega}\Lambda(\eta)\,d\eta=1$, we have
\begin{eqnarray}\label{24}
\int_{\Omega}u(t,\eta)\Lambda(\eta)\,d\eta&=&\int_{\Omega}e^{\ln u(t,\eta)}\Lambda(\eta)\,d\eta\nonumber\\
&\geq& e^{\int_{\Omega}\ln(u(t,\eta))\Lambda(\eta)\,d\eta}\nonumber\\
&=&e^{-y(t)},
\end{eqnarray}
 for all $t\in[0,T]$. In addition, using H\"{o}lder's inequality, we have
\begin{eqnarray*}
\int_{\Omega}u(t,\eta)\Lambda(\eta)\,d\eta&=&\int_{\Omega}u(t,\eta)\Lambda^{\frac{1}{p-1}}(\eta)\Lambda^{\frac{p-2}{p-1}}(\eta)\,d\eta\\
&\leq& \left(\int_{\Omega}u^{p-1}(t,\eta)\Lambda(\eta)\,d\eta\right)^{\frac{1}{p-1}}\left(\int_{\Omega}\Lambda(\eta)\,d\eta\right)^{\frac{p-2}{p-1}}\\
&=& \left(\int_{\Omega}u^{p-1}(t,\eta)\Lambda(\eta)\,d\eta\right)^{\frac{1}{p-1}},
\end{eqnarray*}
which implies, using \eqref{24} and $y(t)\leq c_2$, that
\begin{eqnarray}\label{25}
\int_{\Omega}u^{p-1}(t,\eta)\Lambda(\eta)\,d\eta&\geq&\left(\int_{\Omega}u(t,\eta)\Lambda(\eta)\,d\eta\right)^{p-1}\nonumber\\
&=&\left(\int_{\Omega}u(t,\eta)\Lambda(\eta)\,d\eta\right)^{p-2}\left(\int_{\Omega}u(t,\eta)\Lambda(\eta)\,d\eta\right)\nonumber\\
&\geq&e^{-(p-2)y(t)}\left(\int_{\Omega}u(t,\eta)\Lambda(\eta)\,d\eta\right)\nonumber\\
&\geq&e^{-(p-2)c_2}\left(\int_{\Omega}u(t,\eta)\Lambda(\eta)\,d\eta\right)\nonumber\\
&=&2\lambda_1\int_{\Omega}u(t,\eta)\Lambda(\eta)\,d\eta,
\end{eqnarray}
for all $t\in(0,T^{*}]$. Therefore, by \eqref{23} and \eqref{25}, we get
\begin{align*}
y^{\prime}(t)&\leq \lambda_1\int_{\Omega}u(t,\eta)\Lambda(\eta)\,d\eta-2\lambda_1\int_{\Omega}u(t,\eta)\Lambda(\eta)\,d\eta\\&=-\lambda_1\int_{\Omega}u(t,\eta)\Lambda(\eta)\,d\eta,\end{align*}
and then, by using \eqref{24}, we we arrive at
$$y^{\prime}(t)\leq-\lambda_1\,e^{-y(t)}\leq0,\qquad\hbox{ for all}\,\,t\in(0,T^{*}],
$$
and hence
$$c_2=y(T^{*})\leq y(0)=y_0<c_2;$$
contradiction.\\
\noindent{\bf Step 3.} From Step 2, we have $y(t)\leq c_2$, for all $t\in[0,T]$. This implies, using \eqref{24}-\eqref{25}, that
$$
y^{\prime}(t)\leq -\lambda_1\,e^{-y(t)},\qquad\hbox{ for all}\,\,t\in(0,T],
$$
so
$$0<e^{y(t)}\leq e^{y_0}-\lambda_1t\qquad\hbox{ for all}\,\,t\in[0,T],$$
 and hence
$$T< \frac{e^{y_0}}{\lambda_1}\leq \frac{e^{c_2}}{\lambda_1},$$
which implies a contradiction by choosing from the beginning $T$ big enough, namely $T\geq\frac{e^{c_2}}{\lambda_1}$. This completes the proof.
\end{proof}

\section*{Declaration of competing interest}
	The authors declare that there is no conflict of interest.

\bibliographystyle{amsplain}

\end{document}